\newtheorem{theorem}{Theorem}[section]
\newtheorem{lemma}[theorem]{Lemma}
\newtheorem{lem}[theorem]{Lemma}
\newtheorem{conj}[theorem]{Conjecture}
\newtheorem{prop}[theorem]{Proposition}
\newtheorem{claim}[theorem]{Claim}
\theoremstyle{definition}
\theoremstyle{remark}
\newcommand{\Pb}{\mathbb{P}}
\newcommand{\E}{\mathbb{E}}
\newcommand{\Var}{\text{Var}}
\newcommand{\Cov}{\text{Cov}}
\DeclareMathOperator{\ind}{ind}
\numberwithin{equation}{section}
\begin{document}

\title{Edge-statistics on large graphs}

\author{Noga Alon}
\address{Schools of Mathematics and Computer Science, Tel Aviv University, Tel Aviv 6997801, Israel and Department
of Mathematics, Princeton University, Princeton, NJ 08544, USA.
}
\email{nogaa@tau.ac.il}
\thanks{The research of Noga Alon is supported in part by  
ISF grant 281/17 and by GIF 
grant G-1347-304.6/2016.}

\author{Dan Hefetz}
\address{Department of Computer Science, Ariel University, Ariel 40700, Israel.}
\email{danhe@ariel.ac.il}
\thanks{The research of Dan Hefetz is supported by ISF grant 822/18.}

\author{Michael Krivelevich}
\address{Sackler School of Mathematical Sciences, Tel Aviv University, Tel Aviv 6997801, Israel.}
\email{krivelev@post.tau.ac.il }
\thanks{The research of Michael Krivelevich is supported in part by BSF grant 2014361 and ISF grant 1261/17.}

\author{Mykhaylo Tyomkyn}
\address{Sackler School of Mathematical Sciences, Tel Aviv University, Tel Aviv 6997801, Israel.}
\email{tyomkynm@post.tau.ac.il}
\thanks{The research of Mykhaylo Tyomkyn is supported in part by ERC Starting Grant 633509.}


\date{\today}

\begin{abstract}
The inducibility of a graph $H$ measures the maximum number of induced copies of $H$ a large graph $G$ can have. Generalizing this notion, we study how many induced subgraphs of fixed order $k$ and size $\ell$ a large graph $G$ on $n$ vertices can have. Clearly, this number is $\binom{n}{k}$ for every $n$, $k$ and $\ell \in \left \{0, \binom{k}{2} \right\}$. We conjecture that for every $n$, $k$ and $0 < \ell < \binom{k}{2}$ this number is at most $\left(1/e + o_k(1) \right) \binom{n}{k}$. If true, this would be tight for $\ell \in \{1, k-1\}$.  

In support of our `Edge-statistics conjecture' we prove that the corresponding density is bounded away from $1$ by an absolute constant. Furthermore, for various ranges of the values of $\ell$ we establish stronger bounds. In particular, we prove that for `almost all' pairs $(k, \ell)$ only a polynomially small fraction of the $k$-subsets of $V(G)$ have exactly $\ell$ edges, and prove an upper bound of $(1/2 + o_k(1))\binom{n}{k}$ for $\ell = 1$. 

Our proof methods involve probabilistic tools, such as anti-concentration results relying on fourth moment estimates and Brun's sieve, as well as graph-theoretic and combinatorial arguments such as Zykov's symmetrization, Sperner's theorem and various counting techniques.
\end{abstract}

\maketitle

\section{Introduction}

Let $k$ be a positive integer and let $G$ be a finite graph of order at least $k$. 
Let $A=A_{G,k}$ be chosen uniformly at random from all subsets of $V(G)$ of size $k$ and let $X_{G,k} = e(G[A])$. That is, $X_{G,k}$ is the random variable counting the number of edges of $G$ with both endpoints in $A$. Naturally, the above quantities can also be interpreted as densities rather than probabilities, and we shall frequently switch between these two perspectives. 

Given integers $n \geq k$ and $0 \leq \ell \leq \binom{k}{2}$, let $I(n, k, \ell) = \max\{\Pb(X_{G,k} = \ell) : |G| = n\}$, that is, $I(n, k, \ell)$ is the maximum density of induced subgraphs with $k$ vertices and $\ell$ edges, taken over all graphs of order $n$. A standard averaging argument shows that $I(n,k,\ell)$ is a monotone decreasing function of $n$. Consequently, we define $\ind(k,\ell) := \lim_{n\rightarrow \infty} I(n,k,\ell)$ to be the \emph{edge-inducibility} of $k$ and $\ell$. While this quantity is trivially $1$ for $\ell \in \left\{0, \binom{k}{2} \right\}$ (simply take $G$ to be a large empty or complete graph, respectively), it is natural to ask how large can $\ind(k, \ell)$ be for $0 < \ell < \binom{k}{2}$. 

This question is closely related to the problem of determining the inducibilities of fixed graphs, a concept which was introduced in 1975 by Pippenger and Golumbic~\cite{PG}. For a graph $H$, let $D_H(G)$ denote the number of induced subgraphs of $G$ that are isomorphic to $H$, and let $I_H(n) = \max\{D_H(G) : |G| = n\}$. Again, the sequence $\{I_H(n)/\binom{n}{|H|}\}_{n=|H|}^{\infty}$ is monotone decreasing and thus converges to a limit $\ind(H)$, the \emph{inducibility} of $H$. Recently there has been a surge of interest in this area (see, e.g.,~\cite{Baletal, HT, Yuster, KNV}).

Observe that both types of inducibility are invariant under taking complements, that is, $\ind(k, \ell) = \ind \left(k,\binom{k}{2} - \ell \right)$
and $\ind(H) = \ind(\overline{H})$. Note also that $\ind(H) \leq \ind(|H|, e(H))$. Moreover, if $|H| = k$ and $e(H) \in \left\{1, \binom{k}{2} - 1 \right\}$, then $\ind(H) = \ind(k, e(H))$, as $H$ is the unique (up to isomorphism) graph with $k$ vertices and $e(H)$ edges. 

Consider a random graph $G \sim G(n,p)$, where $p = \binom{k}{2}^{-1}$. A straightforward calculation shows that the expected value of the number of $k$-subsets of $V(G)$ which span precisely one edge is about $1/e$. This implies that $\ind(k,1) \geq 1/e + o_k(1)$ (as the $o_k(1)$ notation suggests, we will often think of $k$ as an asymptotic quantity and, in particular, we will assume $k$ to be sufficiently large wherever needed). In fact, we will see later that there are many constructions which achieve $1/e + o_k(1)$ as a lower bound for $\ind(k,1)$. Another construction, achieving the same asymptotic value for $\ell = k-1$ is the complete bipartite graph with the smaller part of size $n/k$, so that $\ind(k,k-1) \geq \ind(K_{1,k-1}) \geq 1/e + o_k(1)$. In fact, it is known~\cite{BS} that $\ind(K_{1,k-1}) = 1/e + o_k(1)$.   
Note that the $o_k(1)$ term is necessary. For example, counting cherries in $K_{n/2,n/2}$ shows that $\ind(3,2) = \ind(K_{1,2}) \geq 3/4$ (in fact, it follows from Goodman's Theorem that $\ind(3,1) = \ind(3,2) = 3/4$).
Motivated by the aforementioned constructions (as well as some additional data), we conjecture that the lower bound of $1/e$ is asymptotically tight.
\begin{conj}[The Edge-statistics Conjecture] \label{conj:stat}
For every $\varepsilon > 0$ there exists $k_0 = k_0(\varepsilon)$ such that for all integers $k > k_0$ and $0 < \ell < \binom{k}{2}$ we have $\ind(k,\ell) \leq 1/e + \varepsilon$.
\end{conj}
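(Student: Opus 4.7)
The plan is to bound $\Pb(X_{G,k} = \ell) \leq 1/e + \varepsilon$ uniformly in $G$ and $\ell$. The first reduction is the complement-symmetry $\ind(k,\ell) = \ind(k, \binom{k}{2} - \ell)$, which lets us restrict to $\ell \leq \binom{k}{2}/2$. I would then split the analysis into three regimes of $\ell$ and handle each separately, since the extremal behaviour differs sharply between them.

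For the bulk regime $\ell \gg k$, the natural tool is an anti-concentration bound on $X_{G,k}$ via fourth moments. Writing $X_{G,k} = \sum_{\{u,v\} \in \binom{V(G)}{2}} \mathbf{1}[u, v \in A]\,\mathbf{1}[uv \in E(G)]$ and expanding $\E X_{G,k}^2$ and $\E X_{G,k}^4$ in terms of edge-, cherry- and $C_4$-counts of $G$, a Carbery--Wright / Paley--Zygmund style inequality should yield $\Pb(X_{G,k} = \ell) = O(\sigma^{-1})$, where $\sigma^2 = \Var(X_{G,k})$. Provided one can show $\sigma^2 \geq c \,\E X_{G,k}$ outside of a small family of highly structured graphs (to be handled separately), this gives the desired bound in the bulk, and also in the intermediate regime $1 \ll \ell \lesssim k$, where the same estimate gives $O(\ell^{-1/2}) = o(1)$.

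The hard regime is $\ell = O(1)$, which is where the value $1/e$ is actually tight. Anti-concentration is useless here, so I would turn to Zykov-style symmetrization: iteratively replace pairs of vertices of $G$ by two copies of the one whose replacement does not decrease $\Pb(X_{G,k} = \ell)$. For $\ell = 1$ this should force an asymptotically extremal $G$ to be a disjoint union of cliques, reducing the problem to an explicit optimization over clique-size profiles; the maximum can then be computed to be $1/e + o_k(1)$ via Lagrange multipliers and Stirling. For general constant $\ell$ the plan would be analogous, combined with Brun's sieve to show that the number of $k$-subsets with exactly $\ell$ edges is asymptotically Poisson$(\mu)$-distributed with $\mu$ chosen to maximise at $\ell$.

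The main obstacle, in my view, is precisely the boundary regime $\ell \geq 2$. Zykov symmetrization need not collapse $G$ to a disjoint union of cliques, and the resulting class of extremal structures (blow-ups of small graphs, or hybrids of such blow-ups with extra edges) does not admit a clean closed-form optimization. Showing uniformly in $k$ that no such structure exceeds $1/e$ appears to require a genuinely new structural insight beyond the tools surveyed in the abstract --- which is presumably why the statement appears here as a conjecture rather than a theorem.
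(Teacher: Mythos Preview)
The statement you are asked to prove is labelled a \emph{conjecture} in the paper, and the paper contains no proof of it; you yourself acknowledge this in your final paragraph. What the paper actually proves are the partial results in its table: $\ind(k,\ell) \leq 1-\varepsilon$ uniformly (Theorem~\ref{thm:main}), $\ind(k,\ell) \leq 1/2 + o(1)$ for $\ell \gg k$ (Proposition~\ref{prop:largelweak}), $\ind(k,\ell) = O(k^{-0.1})$ for $\ell$ quadratic in $k$ (Theorem~\ref{thm:poly}), and $\ind(k,1) \leq 1/2 + o(1)$, $\ind(k,\ell) \leq 3/4 + o(1)$ for fixed $\ell$ (Theorem~\ref{thm:fixed}). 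The full conjecture was only proved later, in \cite{KST} combined with \cite{FS} or \cite{MMNT}, as the paper's addendum records.

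On the substance of your plan: the fourth-moment anti-concentration idea is exactly how the paper proves Theorem~\ref{thm:main}, though it extracts only an absolute constant below $1$, not $1/e$; the variance lower bound you need is not automatic and the paper spends Section~\ref{sec:prelim} establishing the structural input (bounded degree via Lemma~\ref{lem:maxdeg}) that makes the moment computation go through. For $\ell = \Theta(k^2)$ the paper does not use moments at all but a two-round sampling together with Sperner's theorem (Section~\ref{sec:poly}). For fixed $\ell$ the paper does combine Brun's sieve (your Poisson heuristic is correct and is Proposition~\ref{prop:Brun}) with a degree-based case analysis, but this yields only $1/2$ for $\ell=1$ and $3/4$ in general --- not $1/e$.

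Your specific plan for $\ell=1$, that Zykov symmetrization should force a disjoint union of cliques and reduce the problem to Lagrange multipliers, does not work. The paper observes after Proposition~\ref{prop:Brun} that \emph{any} graph with $(1+o(1))n^2/k^2$ edges and maximum degree $o(n/k)$ already achieves $1/e + o(1)$; the near-extremal family is far too rich for symmetrization to isolate a single tractable structure, and in the paper Zykov is used only for the local averaging of Lemma~\ref{lem:zykov}, not for global structure. This richness of near-extremizers is precisely why the sublinear regime remained open and was ultimately settled in \cite{FS, MMNT} by quite different methods.
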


For graph-inducibilities we make an analogous conjecture, which would be implied by the Edge-statistics Conjecture. 
\begin{conj}[The Large Inducibility Conjecture] \label{conj:ind}
\[\limsup \left\{\ind(H) :  H \notin \{K_{|H|}, \overline{K}_{|H|}\} \right\} = 1/e.\]
\end{conj}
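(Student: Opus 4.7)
The plan is to establish both directions of the claimed equality, taking the statement to mean $\lim_{k \to \infty} \sup\{\ind(H) : |H| \geq k,\, H \notin \{K_{|H|}, \overline{K}_{|H|}\}\} = 1/e$ (under a literal $\sup$ reading, the example $\ind(K_{1,2}) = 3/4$ already refutes equality). As the authors note, Conjecture \ref{conj:ind} is implied by Conjecture \ref{conj:stat}, so I would present this as a short conditional derivation, with the real work being the Edge-statistics Conjecture itself.

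For the upper bound ($\limsup \leq 1/e$): fix $\varepsilon > 0$ and let $k_0 = k_0(\varepsilon)$ be as furnished by Conjecture \ref{conj:stat}. For any graph $H$ with $|H| > k_0$ and $H \notin \{K_{|H|}, \overline{K}_{|H|}\}$ we have $0 < e(H) < \binom{|H|}{2}$, and then
\[
\ind(H) \leq \ind(|H|, e(H)) \leq 1/e + \varepsilon,
\]
where the first inequality is the containment recorded in the excerpt and the second is the conclusion of Conjecture \ref{conj:stat}. Since only finitely many non-trivial $H$ have $|H| \leq k_0$, this yields the desired bound on the limsup.

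For the lower bound ($\limsup \geq 1/e$): the excerpt already supplies an explicit sequence of non-trivial graphs witnessing it, namely the stars $K_{1,k-1}$, for which $\ind(K_{1,k-1}) = 1/e + o_k(1)$ was established in~\cite{BS}. Their orders are unbounded, so these values genuinely contribute to the limsup and pin it down from below.

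The main (indeed, the only) obstacle in this plan is Conjecture \ref{conj:stat} itself; once it is in hand, the Large Inducibility Conjecture reduces to the two short observations above. On the positive side, any unconditional partial progress on Conjecture \ref{conj:stat} established in the paper (the absolute-constant separation from $1$, or the polynomial bounds for most $(k,\ell)$) translates verbatim, by the same one-line argument, into partial progress on Conjecture \ref{conj:ind}, with $1/e + \varepsilon$ replaced by the best bound currently available.
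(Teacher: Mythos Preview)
Your proposal is correct and matches the paper's treatment: this statement is a \emph{conjecture}, not a theorem, and the paper does not prove it but merely asserts (immediately before stating it) that it ``would be implied by the Edge-statistics Conjecture.'' Your conditional derivation --- upper bound via $\ind(H) \leq \ind(|H|, e(H))$ together with Conjecture~\ref{conj:stat}, lower bound via the stars $K_{1,k-1}$ from~\cite{BS} --- is exactly the intended reduction, and your caveat that the real content lies in Conjecture~\ref{conj:stat} is precisely the paper's own position.
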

Our first theorem in this paper constitutes a first step towards proving Conjecture~\ref{conj:stat}. It asserts that $\ind(k,\ell)$ is bounded away from $1$ by an absolute constant for every $k$ and $0 < \ell < \binom{k}{2}$.
\begin{theorem} \label{thm:main}
There exists an $\varepsilon > 0$ such that for all positive integers $k$ and $\ell$ which satisfy $0 < \ell < \binom{k}{2}$ we have 
$$
\ind(k,\ell) < 1 - \varepsilon.
$$ 
\end{theorem}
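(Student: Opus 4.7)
The plan is to argue by contradiction: assume $\Pb(X_{G,k} = \ell) \geq 1 - \delta$ for some $n$-vertex graph $G$, some $0 < \ell < \binom{k}{2}$, and some small $\delta > 0$, and derive a lower bound on $\delta$ independent of $k$ and $\ell$. By replacing $G$ with its complement and $\ell$ with $\binom{k}{2} - \ell$ if needed, we may assume $\ell \leq \binom{k}{2}/2$. The starting observation is a single-vertex swap: for a uniformly random $A \in \binom{V(G)}{k}$ together with independent uniform $v \in A$ and $u \in V(G) \setminus A$, the set $A' := (A \setminus \{v\}) \cup \{u\}$ is again uniform, so a union bound yields $\Pb(X_{G,k}(A) = X_{G,k}(A') = \ell) \geq 1 - 2\delta$. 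Writing $S = A \setminus \{v\}$ and using $X_{G,k}(A) - X_{G,k}(A') = d_S(v) - d_S(u)$, we conclude that for at least a $(1 - 2\delta)$-fraction of triples $(S, v, u)$ the signed hypergeometric sum $\sum_{w \in S}(\mathbf{1}[vw \in E(G)] - \mathbf{1}[uw \in E(G)])$ vanishes.

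A standard anti-concentration estimate for such signed sums shows that for each pair $(u, v)$ with $t_{uv} := |N(u) \triangle N(v)|$, this vanishing happens with probability bounded away from $1$ by an absolute constant whenever $t_{uv} k / n$ exceeds a universal threshold. Hence all but an $O(\delta)$-fraction of vertex pairs satisfy $t_{uv} = O(n/k)$. A Markov-type averaging produces a vertex $v^*$ such that, writing $N^* := N(v^*)$, all but $o(n)$ of the vertices $u$ satisfy $|N(u) \triangle N^*| = O(n/k)$; double-counting the bipartite edges between $N^*$ and $V(G) \setminus N^*$ from both sides then forces $\min(|N^*|, |V(G) \setminus N^*|)/n$ to be bounded above by a small function of $\delta$ and $1/k$. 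In the near-$K_n$ regime (where $|V(G) \setminus N^*|$ is small) the complement $\overline{G}$ has at most $O(\sqrt{\delta}\, n^2)$ edges, forcing $\E X_{G,k} \geq \binom{k}{2} - O(\sqrt{\delta}\, k^2)$; combined with $|\E X_{G,k} - \ell| \leq \delta \binom{k}{2}$ and $\ell \leq \binom{k}{2}/2$, this already makes $\delta$ at least an absolute constant, so this case is settled.

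We may therefore assume $G$ is in the near-$\overline{K_n}$ (sparse) regime, with $e(G)$ small, $G$ approximately regular (since $|d(u) - d(v)| \leq t_{uv}$), and $\E X_{G,k} \geq 1 - \delta$ because $\ell \geq 1$. The final step is a Brun sieve / fourth-moment Poisson approximation applied to $X_{G,k} = \sum_{e \in E(G)} \mathbf{1}[e \subset A]$: the structural bounds on $e(G)$ and the near-regularity keep the factorial moments of $X_{G,k}$ close to those of $\mathrm{Poisson}(\E X_{G,k})$, and hence $\Pb(X_{G,k} = \ell)$ is bounded above by the maximum point-mass of the corresponding Poisson distribution plus a vanishing error. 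Since $\E X_{G,k} \geq 1 - \delta$, this maximum is at most an absolute constant strictly less than $1$, contradicting the initial hypothesis for $\delta$ smaller than a universal constant. The finitely many small-$k$ cases are handled separately using $\ind(k, \ell) < 1$ strictly for each such pair.

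The principal obstacle is making the final Brun/Poisson step quantitatively robust using only the structural conclusions available --- in particular, bounding the joint probabilities $\Pb(e, e' \subset A)$ summed over pairs of edges of $G$ by $(1 + o(1))(\E X_{G,k})^2$ without appealing to stronger pseudorandomness on $G$. When $\delta$ is only a small constant (rather than decaying with $k$), the approximate-regularity conclusion permits a non-trivial fraction of outlier vertices, and controlling their contribution to the fourth factorial moment of $X_{G,k}$ is the delicate technical point; a successful execution may well require iterating the swap/anti-concentration argument, or combining it with Zykov-style symmetrization and Sperner-type antichain bounds to sharpen the structural conclusion before invoking the Poisson estimate.
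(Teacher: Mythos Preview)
Your broad strategy---swap argument, anti-concentration for $d_S(v)-d_S(u)$, structural conclusion that $|N(u)\triangle N(v)|$ is small, then a final anti-concentration step---matches the paper's in outline, but two substantive differences matter, and one of them is exactly the gap you flag.

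First, the paper works with the \emph{extremal} $n$-vertex graph $G$ and uses Zykov symmetrization (Lemma~\ref{lem:zykov}) to show that $\Pb(X=\ell\mid v\in A)=a+o_n(1)$ for \emph{every} vertex $v$. This upgrades your average statement (``a $(1-2\delta)$-fraction of triples have $d_S(v)=d_S(u)$'') to a pointwise one: \emph{every} pair satisfies $|N(u)\triangle N(v)|<Cn/k$ (Lemma~\ref{lem:symmdiff}), hence $\Delta(G)<Cn/k$ with $C\to 0$ as $a\to 1$ (Lemma~\ref{lem:maxdeg}). Your route, lacking Zykov, leaves an $O(\delta)$-fraction of outlier vertices, and this is precisely what spoils the endgame. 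Both the variance computation~\eqref{eq:VarX} and the subgraph-counting in the fourth-moment analysis use the uniform bound $\Delta(G)=o(n/k)$; a constant fraction of high-degree vertices can dominate several of the nineteen cases.

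Second, the paper's final step is \emph{not} Brun's sieve. Brun (Proposition~\ref{prop:Brun}) needs $\Delta(G)=o_k(n/k)$ and $\mu:=\E X$ bounded, and the paper deploys it only in Section~\ref{sec:fixed} for fixed $\ell$. For Theorem~\ref{thm:main}, where $\ell$ may grow with $k$ and hence $\mu$ is unbounded, Poisson approximation in the form you describe is not available. Instead the paper proves a kurtosis bound $\E[(X-\mu)^4]\leq b\cdot\Var(X)^2$ (Lemma~\ref{lem:truth}) by a case analysis over all configurations of four (possibly repeated) edges, and then applies the fourth-moment anti-concentration lemma of Alon--Gutin--Krivelevich (Lemma~\ref{lem:agk}) to get $\Pb(X=\ell)\leq 1-1/(2^{4/3}b)$. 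Your suggestion to ``combine with Zykov-style symmetrization'' is exactly what is needed to make the structural step sharp enough; but you should then replace the Brun/Poisson finish with this fourth-moment argument, which handles all $\ell$ uniformly.
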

\noindent
For clarity of presentation, we do not give explicit bounds on $\varepsilon$ and refer to Section~\ref{sec:remarks} for a discussion. 

Note that it is not hard to prove that for every positive integer $k$ we have $\ind(k, \ell) = 1$ if and only if $\ell \in \left\{0, \binom{k}{2} \right\}$. Indeed, if $0 < \ell < \binom{k}{2}$, then $\ind(k,\ell) < 1 - 4^{-k^2}$ is an easy consequence of Ramsey's Theorem and the aforementioned monotonicity of $I(n, k, \ell)$. With a bit more effort, this bound can be improved to $1 - k^{-2}$. On the other hand, we do not see a simple argument that would upper bound $\ind(k,\ell)$ away from $1$ by an absolute constant as in Theorem~\ref{thm:main}. Note also that the related problem of \emph{minimizing} graph-inducibilities has been extensively studied. In particular, Pippenger and Golumbic~\cite{PG} showed that the inducibility of any $k$-vertex graph is at least $(1 + o_k(1)) k!/k^k$. It follows that $\ind(H) > 0$ for every graph $H$ and thus $\ind(k,\ell) > 0$ for every $k$ and $\ell$. We refer the reader to Section~\ref{sec:remarks} for further discussion.

For various ranges of values of $\ell$ (viewed as a function of $k$) we establish much better upper bounds than the one stated in Theorem~\ref{thm:main}. First, for every $\ell$ satisfying $\min \left\{\ell, \binom{k}{2} - \ell \right\} = \omega(k)$, we prove an upper bound of $1/2$.
\begin{prop} \label{prop:largelweak}
For every $\varepsilon > 0$ there exist $C(\varepsilon) > 0$ and $k_0(\varepsilon) > 0$ such that the following holds. Let $k$ and $\ell$ be integers satisfying $k \geq k_0$ and $C k \leq \ell \leq \binom{k}{2} - C k$. Then  
$$
\ind(k,\ell) \leq \frac{1}{2} + \varepsilon.
$$
\end{prop}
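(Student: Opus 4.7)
I would argue by contradiction. Fix $\varepsilon > 0$; assume there is an $n$-vertex graph $G$ (with $n$ very large) such that $p := \Pb[X_{G,k} = \ell] > \tfrac{1}{2} + \varepsilon$ for some $\ell \in [Ck, \binom{k}{2} - Ck]$. The plan is to show that this forces $G$ to be very close to a blow-up of a small graph, and then to derive a contradiction from the middle-range hypothesis on $\ell$. The first step reformulates the assumption as a statement about vertex degrees. Sample a uniform $(k-1)$-subset $A' \subseteq V(G)$ together with two distinct uniform vertices $u, v \in V(G) \setminus A'$, and set $A = A' \cup \{u\}$ and $B = A' \cup \{v\}$. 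Both $A$ and $B$ are marginally uniform $k$-subsets, so by inclusion--exclusion $\Pb[e(G[A]) = e(G[B]) = \ell] \ge 2p - 1 > 2\varepsilon$. Since $e(G[A]) - e(G[B]) = d_{A'}(u) - d_{A'}(v)$, this gives
$$
\Pb_{A',u,v}\!\bigl[d_{A'}(u) = d_{A'}(v)\bigr] > 2\varepsilon.
$$

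The second step is a Littlewood--Offord-type anti-concentration bound for hypergeometric random variables. For fixed $u \ne v$, set $s = s(u,v) := |N(u) \triangle N(v)|$ and decompose $d_{A'}(u) - d_{A'}(v) = X - Y$ with $X = |A' \cap (N(u) \setminus N(v))|$ and $Y = |A' \cap (N(v) \setminus N(u))|$. A direct analysis of the central terms $\binom{a}{t/2}\binom{b}{t/2}/\binom{s}{t}$ (where $a + b = s$ and $t = X + Y$) shows that, uniformly in $n \gg k$,
$$
\Pb_{A'}\!\bigl[d_{A'}(u) = d_{A'}(v)\bigr] \le \frac{C_1}{\sqrt{1 + s(u,v)\, k/n}}.
$$
Plugging this into the previous display and averaging over $(u, v)$, a positive constant fraction of ordered pairs in $V(G)^2$ must satisfy $s(u, v) \le M n/k$ for some $M = M(\varepsilon) = O(1/\varepsilon^2)$.

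A standard greedy clustering argument on the ``twin graph'' $\{(u,v): s(u,v) \le Mn/k\}$ then produces a partition $V(G) = U \sqcup V_1 \sqcup \cdots \sqcup V_r$ with $|U| = o(n)$, $r = r(\varepsilon) = O_\varepsilon(1)$, and such that any two vertices in a common $V_i$ are near-twins with $|N(u) \triangle N(v)| = O_\varepsilon(n/k)$. Modifying at most $O_\varepsilon(n^2/k)$ edges turns $G$ into an exact blow-up $G^*$ of a graph $H$ on $[r]$. For $G^*$, $e(G^*[A])$ is a fixed quadratic function $Q$ of the intersection profile $\mathbf a = (|A \cap V_i|)_{i=1}^r$, which follows an (approximately) multinomial distribution with $k$ trials. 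The hypothesis $Ck \le \ell \le \binom{k}{2} - Ck$ prevents $H$ (together with its edge-density assignments) from being close to the trivial ``all edges'' or ``no edges'' cases, so $Q$ is non-constant on the support of $\mathbf a$; standard anti-concentration estimates for quadratic forms of multinomial vectors then give $\Pb[Q(\mathbf a) = \ell] = o_k(1)$. Since replacing $G$ by $G^*$ perturbs $\Pb[X_{G,k} = \ell]$ by only $o_k(1)$, this contradicts $p > \tfrac{1}{2} + \varepsilon$ for $k$ large.

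The main obstacle is the sharp anti-concentration bound of the second step: one must establish the $1/\sqrt{1 + sk/n}$ rate uniformly in the split $s = a + b$, with the balanced case $a = b = s/2$ being the tightest. The clustering step is routine but parameter-sensitive, and the final anti-concentration for quadratic forms on multinomial vectors is classical. The middle-range hypothesis on $\ell$ enters only at the very end, where it is used to rule out the degenerate case in which $Q$ is essentially constant on the support of $\mathbf a$ (which would correspond to $G^*$ being near-complete or near-empty, placing $\ell$ outside the permitted range).
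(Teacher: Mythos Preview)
Your first two steps parallel the paper's Lemmas~2.2--2.3, but only in averaged form: for a \emph{random} pair $(u,v)$ you obtain $\Pb_{A'}[d_{A'}(u)=d_{A'}(v)]>2\varepsilon$, and hence only a $\Theta(\varepsilon)$-fraction of pairs satisfy $s(u,v)\le Mn/k$. Step~3 then asserts that a greedy clustering on this twin graph yields $O_\varepsilon(1)$ near-twin classes with exceptional set $|U|=o(n)$, but this does not follow from density alone. Consider a graph in which some set $S$ of $\sqrt{2\varepsilon}\,n$ vertices are mutual exact twins while all remaining pairs are far apart in symmetric difference: the twin graph has density $\approx\varepsilon$, yet any clustering leaves $|U|=\Theta(n)$. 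The averaged bound is simply too weak to force a global blow-up structure. (Step~4 is also shaky: modifying $O_\varepsilon(n^2/k)$ edges perturbs $X_{G,k}$ by order $\Theta_\varepsilon(k)$ in expectation, not $o(1)$, so pointwise anti-concentration for $Q(\mathbf a)$ on $G^*$ does not transfer back to a $o_k(1)$ bound on $\Pb[X_{G,k}=\ell]$.)

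The paper repairs exactly this gap by taking $G$ to be \emph{extremal} and applying Zykov symmetrization (Lemma~2.1) to get $\Pb[X=\ell\mid v\in A]=a+o_n(1)$ for \emph{every} vertex $v$; the degree-collision argument then holds for every pair, giving $|N(u)\triangle N(v)|=O_\varepsilon(n/k)$ for all $u,v$ (Lemma~2.3). From there the paper's finish is far shorter than your Steps~3--4: the uniform near-twin condition directly yields $\Delta(G)=O_\varepsilon(n/k)$ (Lemma~2.5), hence $e(G)=O_\varepsilon(n^2/k)$; on the other hand $\Pb[X=\ell]>1/2$ forces $e(G)\ge(1/2-o(1))\,\ell\, n^2/k^2\ge (C/3)\,n^2/k$ (Lemma~2.6), a contradiction for $C$ large. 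No clustering, no blow-up, and no quadratic-form anti-concentration is needed---the middle-range hypothesis on $\ell$ enters simply as a lower bound on $e(G)$.
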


Next, we prove Conjecture~\ref{conj:stat} `almost everywhere'. In fact, we prove a much stronger statement, namely that for every $\ell$ satisfying $\min \left\{\ell, \binom{k}{2} - \ell \right\} = \Omega \left(k^2 \right)$ the quantity $\ind(k,\ell)$ is actually polynomially small in $k$ -- the right asymptotic behavior as can be seen by considering the random graph $G(n,\ell/\binom{k}{2})$, which gives $\ind(k,\ell)=\Omega(k^{-1})$. 

\begin{theorem}\label{thm:poly}
For every positive integers $k$ and $\ell$ such that $\min\{\ell, k^2/2-\ell\} = \Omega(k^2)$ we have $\ind(k,\ell) = O \left(k^{-0.1} \right)$.
\end{theorem}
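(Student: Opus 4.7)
The plan is the following. Let $G$ be a graph on $n$ vertices; by monotonicity of $I(n,k,\ell)/\binom{n}{k}$ in $n$ I may take $n$ arbitrarily large. Write $p=e(G)/\binom{n}{2}$, $X=X_{G,k}$, $\mu=\E[X]=p\binom{k}{2}$, and $\sigma^2=\Var(X)$. First, I would expand
\[
\sigma^2 \;=\; \sum_{e,f\in E(G)} \Bigl( \Pb(e\cup f\subseteq A) - \Pb(e\subseteq A)\Pb(f\subseteq A) \Bigr)
\]
and group pairs by $|e\cap f|\in\{0,1,2\}$; each piece is controlled via an explicit binomial ratio $p_r:=\binom{n-r}{k-r}/\binom{n}{k}$ and an elementary graph parameter (edge count, number of $2$-paths, number of disjoint edge pairs), yielding the uniform upper bound $\sigma^2 \le Ck^3$. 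Chebyshev then gives $\Pb(X=\ell)\le \sigma^2/(\mu-\ell)^2 = O(k^3/(\mu-\ell)^2)$, which is already $O(k^{-0.1})$ whenever $|\mu-\ell|\ge k^{1.55}$. The hypothesis $\min\{\ell,\binom{k}{2}-\ell\}=\Omega(k^2)$ then forces, in the remaining regime, the density $p$ into an interval $[\alpha,1-\alpha]$ for some absolute $\alpha>0$, which I assume henceforth.

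Next, the central technical claim to prove is a polynomial \emph{lower} bound $\sigma^2 = \Omega(k^{2.2})$, uniform in $G$ under this density constraint. In the same covariance expansion, the ``diagonal'' contribution $e(G)\cdot p_2(1-p_2)$ is already of order $p(1-p)k^2$, while the off-diagonal pieces from edge pairs sharing one or no vertex are of lower order in $k/n$ once $p\in[\alpha,1-\alpha]$; a Cauchy--Schwarz-based comparison controls two-path and disjoint-pair counts against $e(G)$ and the degree sequence. A parallel (but longer) expansion should also give the fourth-moment bound $\E[(X-\mu)^4]=O(\sigma^4)$. Feeding these two estimates into a Berry--Esseen-type inequality adapted to quadratic functionals of a random subset (or, equivalently, bounding $\Pb(X=X')=\sum_t\Pb(X=t)^2\ge\max_t\Pb(X=t)^2$, with an independent copy $X'$, via the fourth moment) then yields $\Pb(X=\ell)=O(1/\sigma) = O(k^{-1.1})$, comfortably within the target $O(k^{-0.1})$.

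The hard part will be the variance lower bound: a priori, a pathological $G$ of intermediate density could cause near-complete cancellation between the diagonal and off-diagonal covariance contributions. A plausible resolution is a case analysis verifying the bound on each candidate extremal family (complete bipartite graphs, blow-ups of small graphs, quasi-random graphs), combined with a Zykov-style symmetrization or vertex-swap argument reducing the general case to one of these. A more uniform route, perhaps in line with the abstract's mention of Brun's sieve, would establish the polynomial bound on $\Pb(X=\ell)$ directly via a sharper inclusion--exclusion estimate rather than through the second moment.
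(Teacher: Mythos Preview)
Your Chebyshev reduction to the regime $p\in[\alpha,1-\alpha]$ is fine, but the core of the argument---the variance lower bound $\sigma^2=\Omega(k^{2.2})$ and the point bound $\Pb(X=\ell)=O(1/\sigma)$---breaks on a single concrete example. Take $G=K_{n/2,n/2}$ with $n\gg k$. Writing $t=|A\cap V_1|$ and $s=t-k/2$, one has $X=t(k-t)=k^2/4-s^2$, where $s$ is a centred hypergeometric variable with $\Var(s)\sim k/4$. Hence $\sigma^2=\Var(s^2)=\Theta(k^2)$, refuting the claimed $\Omega(k^{2.2})$ lower bound (and note that your own ``diagonal'' term $e(G)p_2(1-p_2)$ is only $\Theta(k^2)$, so there is nothing larger to aim for). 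Worse, $\Pb(X=k^2/4)=\Pb(s=0)=\Theta(k^{-1/2})$, which exceeds $1/\sigma=\Theta(k^{-1})$ by a factor of $k^{1/2}$. So $\Pb(X=\ell)=O(1/\sigma)$ is false in general, and no fourth-moment bound can rescue it: here $\E[(X-\mu)^4]=\Theta(\sigma^4)$ does hold, yet the point mass is $\Theta(k^{-1/2})$. The obstruction is structural: $X$ is a \emph{quadratic} functional of the random subset, and for highly structured $G$ it behaves like a shifted $\chi^2$ variable, not a Gaussian, so Berry--Esseen-type reasoning cannot apply. (Indeed the paper records in its concluding remarks that $\ind(k,k^2/4)=\Omega(k^{-1/2})$.)

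The paper's proof is of a completely different nature and bypasses moments altogether. Working at $n=2k$, it first shows combinatorially that there are $\Theta(k^2)$ \emph{distinguished} pairs $\{u,v\}$ with $|N(u)\triangle N(v)|=\Theta(k)$. The set $A$ is then sampled in two rounds: first $A_0$ of size $k-k^{0.2}$, then $A_1$ of size $k^{0.2}$. Conditioning on $A_0$, one locates subsets $X,Y\subseteq V\setminus A_0$ of size $\Theta(k)$ with $d(x,A_0)-d(y,A_0)\ge k^{0.4}$ for all $x\in X$, $y\in Y$. The second round draws $k^{0.2}$ disjoint vertex pairs from $V\setminus A_0$ and keeps one vertex from each; $\Theta(k^{0.2})$ of these pairs have one endpoint in $X$ and one in $Y$, and swapping $y\leftrightarrow x$ in such a pair strictly increases $e(G[A])$. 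The set $\{e(G[A])=\ell\}$ is therefore an antichain in $\{0,1\}^m$ with $m=\Theta(k^{0.2})$, and Sperner's theorem (in the spirit of Erd\H{o}s's solution to the Littlewood--Offord problem) gives $\Pb(X=\ell)=O(m^{-1/2})=O(k^{-0.1})$. This combinatorial anti-concentration is precisely what sidesteps the $K_{n/2,n/2}$ obstruction your moment approach runs into.
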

\noindent

Lastly, we consider the case when $\ell$ is fixed (i.e., does not depend on $k$). Here we prove an upper bound of $3/4$. In the interesting sub-case $\ell=1$, which corresponds to the inducibility of the one-edge graph (equivalently, of $K_k^-$, the complete graph with one edge removed) we prove a yet better bound of $1/2$.
\begin{theorem} \label{thm:fixed}
For every fixed positive integer $\ell$ we have 
$$
\ind(k,\ell) \leq \frac{3}{4} + o_k(1).
$$
Moreover, for $\ell=1$ we have
$$
\ind(k,1) \leq \frac{1}{2} + o_k(1).
$$ 
\end{theorem}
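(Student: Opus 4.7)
The plan is to combine two double-counting identities for $N_1$, the number of $k$-subsets of $V(G)$ with exactly one edge, and then use a Zykov-style symmetrisation. The first identity is
\[
2N_1 \;=\; \sum_{B \in \mathcal{I}_{k-1}} d_1(B),
\]
where $\mathcal{I}_{k-1}$ denotes the family of $(k-1)$-independent subsets of $G$ and $d_1(B) = |\{v \notin B : |N(v) \cap B| = 1\}|$. This holds because every $k$-set $A$ with unique edge $uv$ has exactly two $(k-1)$-independent subsets, namely $A\setminus\{u\}$ and $A\setminus\{v\}$. A useful structural remark is that for each contributing triple $(B,u,v)$ with $N(v)\cap B = \{u\}$, the ``edge-swap'' $B':=(B\setminus\{u\})\cup\{v\}$ is again in $\mathcal{I}_{k-1}$, so $N_1$ can be read as the number of unordered swap-pairs $\{B,B'\}$ in $\mathcal{I}_{k-1}$. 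Dually, parameterising each $A \in \mathcal{F}_1$ by its unique edge yields
\[
N_1 \;=\; \sum_{uv \in E(G)} \bigl| \mathcal{I}_{k-2}\bigl(V(G) \setminus N[\{u,v\}]\bigr) \bigr|.
\]

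I would then combine these two identities to rule out $N_1$ being close to $\binom{n}{k}$: the first makes $N_1$ large only if many $(k-1)$-ind sets $B$ admit many ``single-neighbour'' witnesses (which forces $G$ to have moderate density), while the second forces either $|E(G)|$ to be moderate or the typical non-neighbourhood to admit few $(k-2)$-independent sets. To turn this trade-off into a clean $(1/2 + o_k(1))$ bound I would apply Zykov symmetrisation, choosing at each step the direction that preserves or increases $N_1$, eventually reducing to the case where $G$ is a blow-up of a graph $H$ on boundedly many vertices with part sizes $(n_i)$. In this setting $N_1/\binom{n}{k}$ becomes a sum of multinomial terms indexed by integer tuples $(a_i)$ with $\sum_i a_i = k$ and $\sum_{ij \in E(H)} a_i a_j = 1$, and an explicit optimisation pins down the constant $1/2$ in the limit $k \to \infty$.

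For the coarser bound $\ind(k,\ell) \le 3/4 + o_k(1)$ with $\ell$ fixed, the argument is more elementary: every $k$-set with exactly $\ell$ edges has at least $k - 2\ell$ vertices isolated in the induced subgraph, so removing any such vertex gives a $(k-1)$-subset still with $\ell$ edges. Double-counting the resulting pairs yields a recursive inequality relating $N_\ell$ for consecutive values of $k$, and the base case $\ind(3,1) = \ind(3,2) = 3/4$ (from Goodman's theorem, as noted in the introduction) propagates to the claimed bound, uniformly in $\ell$.

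The main obstacle is the optimisation step in the $\ell = 1$ case: after the reduction to blow-ups one has to show that no choice of $H$ and $(n_i)$ pushes the resulting multinomial sum past $1/2 + o_k(1)$. The delicate regime is edge density $\Theta(1/k^2)$, exactly where $G(n, 2/k^2)$ realises the conjectured lower bound $1/e$; there both identities contribute non-trivially, and controlling their interaction carefully—rather than merely their individual sizes—is what ultimately yields the correct constant.
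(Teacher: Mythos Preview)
Both parts of your proposal have genuine gaps.

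For the $3/4$ bound, the double-counting you describe yields only
\[
\ind(k,\ell)\;\le\;\frac{k}{k-2\ell}\,\ind(k-1,\ell)
\]
(this is exactly Claim~\ref{cl:21} of the paper, applied to an extremal graph). Since the multiplier $\frac{k}{k-2\ell}$ exceeds $1$, iterating from a base case cannot propagate an upper bound forward: for $\ell=1$ and base $k_0=3$ one obtains $\ind(k,1)\le\tfrac34\prod_{j=4}^{k}\tfrac{j}{j-2}=\tfrac{k(k-1)}{8}$, which is vacuous. Moreover, Goodman's theorem only supplies a base case at $k=3$ for $\ell\in\{1,2\}$; for $\ell\ge 3$ you have no base case at all. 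The paper's argument is entirely different: it localises at a maximum-degree vertex $v$ with $d(v)=cn/k$ via Lemma~\ref{lem:zykov}, handles small $c$ by a Brun's-sieve Poisson comparison (Proposition~\ref{prop:Brun}) that already gives $1/2$, and for $c\ge\log 2$ shows $\delta(G)=o_k(n/k)$ and applies Lemma~\ref{lem:sameDegree} to the pair $(v,w)$ with $d(w)=\delta(G)$ to force $2a-1<\tfrac12+o_k(1)$.

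For the $1/2$ bound, your two identities for $N_1$ are correct, but the remainder is a plan rather than a proof. The Zykov step is not justified: it is not clear that for every pair $u,v$ at least one of the two symmetrisations preserves $N_1$ (the averaging that works for Tur\'an numbers does not transfer automatically to induced-subgraph counts), and even if the process terminates at a blow-up there is no reason the base graph $H$ should have boundedly many vertices --- indeed, the near-extremal constructions following Proposition~\ref{prop:Brun} (e.g.\ a disjoint union of $\binom{k}{2}$ cliques, or $G(n,\binom{k}{2}^{-1})$) have $\Theta(k^2)$ or more vertex classes. Finally, you explicitly identify the multinomial optimisation as the ``main obstacle'' and do not carry it out; but that step is precisely where the constant $1/2$ must emerge. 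The paper again works through the maximum-degree vertex: after reducing to $\Delta(G)=cn/k$ with $0<c\le 1$ (else Brun's sieve already gives $1/e$), conditioning on $|A\cap N(v)|\in\{0,1\}$ and invoking Claims~\ref{cl:21} and~\ref{cl:31} yields the closed-form bound $a\le c/(e^c-1+c)\le 1/2$ directly, with no optimisation over graph families required.
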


\noindent
Our results are summarized in the following table. For various ranges of $\ell \leq k^2/4$, it states the best known upper bound on $\ind(k, \ell)$. Note that for $\ell \geq k^2/4$ the table can be extended symmetrically.

\medskip

\begin{center}
  \begin{tabular}{| c | c | c | c | c | c |}
    \hline
    $\ell = \ell(k)$ & $1$ & const. & $\left[\omega(1), O(k)\right]$ & $\left[\omega(k), o(k^2)\right]$ & $\left[\Omega(k^2), {k^2}/{4}\right]$ \\ \hline
    $ind(k,\ell) \leq$ & ${1}/{2}$ & ${3}/{4}$ & $1-\varepsilon$ & ${1}/{2}$ & $O(k^{-0.1})$\\  
    \hline
  \end{tabular}
\end{center} 

\medskip

\subsection{Notation}

\noindent Throughout this paper, $\log$ stands for the natural logarithm, unless explicitly stated otherwise. For positive integers $n \geq k$ we denote by $(n)_k$ the \emph{falling factorial} $\prod_{i=0}^{k-1} (n-i)$. The \emph{symmetric difference} of two sets $A$ and $B$, denoted by $A \triangle B$, is $(A \setminus B) \cup (B \setminus A)$. 

Our graph-theoretic notation is standard and follows that of~\cite{Byellow}. In particular, we use the following. For a graph $G$, let $V(G)$ and $E(G)$ denote its sets of vertices and edges respectively, and let $|G| = |V(G)|$ and $e(G) = |E(G)|$. The \emph{complement} of $G$, denoted by $\overline{G}$, is the graph with vertex set $V(G)$ and edge set $\binom{V(G)}{2} \setminus E(G)$. For a set $S \subseteq V(G)$, let $G[S]$ denote the graph with vertex set $S$ and edge set $\{uv \in E(G) : u, v \in S\}$. For disjoint sets $S, T \subseteq V(G)$, let $G[S,T]$ denote the bipartite graph with parts $S$ and $T$ and edge-set $\{uv \in E(G) : u \in S, v \in T\}$. For a set $S \subseteq V(G)$ and a vertex $v \in V(G)$, let $N_G(v, S) = \{u \in S : uv \in E(G)\}$ denote the \emph{neighbourhood} of $v$ in $S$ and let $d_G(v, S) = |N_G(v, S)|$ denote the \emph{degree} of $v$ into $S$. We abbreviate $N_G(v, V(G))$ under $N_G(v)$ and $d_G(v, V(G))$ under $d_G(v)$; we refer to the former
  as the \emph{neighbourhood} of $v$ in $G$ and to the latter as the \emph{degree} of $v$ in $G$. The \emph{maximum degree} of a graph $G$ is $\Delta(G) = \max \{d_G(u) : u \in V(G)\}$. Often, when there is no risk of confusion, we omit the subscript $G$ from the notation above. 

The rest of this paper is organized as follows. In Section~\ref{sec:prelim} we establish a number of facts and lemmas which will be useful later on when we will upper bound $\ind(k,\ell)$; we then prove Proposition~\ref{prop:largelweak}. In Sections~\ref{sec::main} and \ref{sec:poly} we prove Theorems~\ref{thm:main} and~\ref{thm:poly} respectively. Moving on to the fixed $\ell$ regime, in Section~\ref{sec:fixed} we establish some additional tools and prove Theorem~\ref{thm:fixed}. In Section~\ref{sec:remarks} we conclude the paper with several remarks and open problems.


\section{Preliminaries} \label{sec:prelim}
\noindent
Recall that $A=A_{G,k}$ is the set chosen uniformly at random from all subsets of $V(G)$ of size $k$ and $X_{G,k} = e(G[A])$. To simplify notation we abbreviate $X_{G,k}$ to $X$ whenever there is no risk of confusion. Our first lemma provides a useful global-local criterion for handling edge-inducibilities.

\begin{lemma} \label{lem:zykov}
Let $k$ and $\ell$ be positive integers satisfying $0 < \ell < \binom{k}{2}$ and let $a = \ind(k, \ell)$. Let $n$ be a sufficiently large integer and let $G$ be a graph on $n$ vertices which attains $I(n,k,\ell)$. Then, for every vertex $v \in V(G)$, we have $\Pb(X = \ell \mid v \in A) = a + o_n(1)$.
\end{lemma}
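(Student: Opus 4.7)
My plan is a Zykov-style symmetrization argument combined with a trivial averaging identity. Set $p_v := \Pb(X = \ell \mid v \in A)$ for each $v \in V(G)$. Double-counting pairs $(v, A)$ with $v \in A$ and $e(G[A]) = \ell$ immediately gives
\begin{equation*}
\frac{1}{n}\sum_{v \in V(G)} p_v \;=\; \Pb(X = \ell) \;=\; I(n,k,\ell) \;=\; a + o_n(1),
\end{equation*}
the last equality by the definition of $a$ and the monotonicity of $I(\cdot, k, \ell)$. It therefore suffices to prove the local bound $|p_u - p_v| = o_n(1)$ for every pair $u,v$ of vertices; combined with the averaging identity, this forces every $p_v$ to be $a + o_n(1)$.

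To establish the local bound, fix $u,v \in V(G)$ and construct an auxiliary graph $G'$ on the vertex set $V(G)$ by turning $u$ into a \emph{non-adjacent twin} of $v$: set $N_{G'}(u) := N_G(v) \setminus \{u\}$, decree $u \not\sim_{G'} v$, and leave every other adjacency unchanged. Since $|V(G')| = n$, the extremality of $G$ gives $\Pb(X_{G'} = \ell) \leq I(n,k,\ell) = \Pb(X_G = \ell)$.

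I will compare these two probabilities by partitioning a uniformly random $k$-subset $A$ according to $A \cap \{u,v\}$. When $u \notin A$ we have $G'[A] = G[A]$ because $G'$ and $G$ agree on $V(G) \setminus \{u\}$. In the case $u \in A,\, v \notin A$, the construction ensures that the map $u \mapsto v$ identity-elsewhere gives a graph isomorphism $G'[A] \cong G[(A \setminus \{u\}) \cup \{v\}]$, as one checks directly from $N_{G'}(u) = N_G(v) \setminus \{u\}$. Hence, using the bijection $A \mapsto (A \setminus \{u\}) \cup \{v\}$ between $\{A : u \in A, v \notin A\}$ and $\{A' : v \in A', u \notin A'\}$, a short calculation shows that the total contribution of the three cases $A \cap \{u,v\} \in \{\emptyset, \{v\}, \{u\}\}$ to the difference $\#\{A : X_{G'}(A) = \ell\} - \#\{A : X_G(A) = \ell\}$ is exactly $(p_v - p_u)\binom{n-1}{k-1}$. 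The remaining case $\{u,v\} \subseteq A$ contributes a residual term bounded in absolute value by $2\binom{n-2}{k-2}$, the total number of such sets. Dividing by $\binom{n}{k}$ and using the extremality inequality,
\begin{equation*}
0 \;\geq\; \Pb(X_{G'} = \ell) - \Pb(X_G = \ell) \;=\; \frac{k}{n}(p_v - p_u) \;+\; O\!\left(\frac{k^2}{n^2}\right),
\end{equation*}
so $p_v - p_u \leq O(k/n)$. Applying the same argument with the roles of $u$ and $v$ swapped yields $|p_u - p_v| = O(k/n) = o_n(1)$ for fixed $k$, which completes the proof.

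The main technical nuisance is the "both $u,v \in A$" case, where $e(G'[A]) - e(G[A])$ depends delicately on $d_G(u, A)$, $d_G(v, A)$ and whether $uv \in E(G)$. But only the crude bound on this case is needed, since its contribution to $\Pb$ is $O(k^2/n^2)$, while the asymmetry term $\frac{k}{n}(p_v - p_u)$ extracted from the three clean cases is of order $k/n$, and so dominates for fixed $k$ as $n \to \infty$.
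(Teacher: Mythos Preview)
Your proof is correct and follows essentially the same approach as the paper's: a double-counting identity for the average of the $p_v$'s combined with a Zykov-type symmetrization to show that all the $p_v$'s are within $O(k/n)$ of one another. The only cosmetic difference is that the paper applies the symmetrization once to the extremal pair $(v^+,v^-)$ maximizing and minimizing $p_v$, whereas you do it for an arbitrary pair and then swap roles; the underlying computation is the same.
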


\begin{proof}
The main idea of the proof is the same as in the proof of Lemma 2.4 from~\cite{HT}. 
Double-counting yields 
\begin{eqnarray*}
a+o_n(1) &=& \Pb(X = \ell) = \frac{1}{k} \cdot \sum_{v \in V(G)} \Pb(X = \ell, v \in A) = \frac{1}{k} \cdot \sum_{v \in V(G)} \Pb(X = \ell \mid v \in A) \cdot \Pb(v \in A) \\ 
&=& \frac{1}{k} \cdot \frac{k}{n} \cdot \sum_{v \in V(G)} \Pb(X = \ell \mid v \in A) = \frac{1}{n} \cdot \sum_{v \in V(G)} \Pb(X = \ell \mid v \in A).
\end{eqnarray*}

Let $v^+$ and $v^-$ be the vertices with the largest and the smallest value of $\Pb(X=\ell \mid v\in A)$, respectively. Let the graph $G'$ be obtained from $G$ by Zykov's symmetrization~\cite{Zy}, i.e., remove $v^-$ and add a twin copy of $v^+$ instead (say, the two copies of $v^+$ are not connected by an edge in $G'$). Then
\begin{align*}
\Pb(X = \ell) &\geq \Pb(X_{G',k} = \ell) \geq \Pb(X = \ell) - \Pb(X = \ell, v^- \in A) + \Pb(X = \ell, v^+ \in A) \\
&- \Pb(X = \ell, v^- \in A, v^+ \in A) \\
&= \Pb(X = \ell) - \frac{k}{n} \cdot \Pb \left(X = \ell \mid v^- \in A \right) 
+ \frac{k}{n} \cdot \Pb \left(X = \ell \mid v^+ \in A \right) \\
&- \frac{k(k-1)}{n(n-1)} \cdot \Pb \left(X = \ell \mid v^+, v^- \in A \right)\\
&= \Pb(X = \ell) + \frac{k}{n} \cdot \left(\Pb(X = \ell \mid v^+ \in A) - \Pb(X = \ell \mid v^- \in A)\right) - O(n^{-2}),
\end{align*}
where the first inequality follows from our assumption that $G$ maximizes $\Pb(X_{G,k} = \ell)$.

Therefore
$$
\Pb(X = \ell \mid v^+ \in A) - \Pb(X = \ell \mid v^- \in A) = O(n^{-1}).
$$
Hence, 
$$
\Pb(X = \ell \mid v^+ \in A) \leq a + O(n^{-1})  \ \ \text{and} \ \  \Pb(X = \ell \mid v^- \in A) \geq a - O(n^{-1}).
$$
Since $a > 0$ (as remarked in the introduction), this concludes the proof of the lemma.
\end{proof}

\noindent
For two vertices $v,w \in V(G)$ we now consider a subset of $V(G) \setminus \{v,w\}$ of size $k-1$, chosen uniformly at random among all such subsets. Our next lemma shows that, assuming that $\ind(k,\ell)$ is large, with a significant probability $v$ and $w$ will have the same degree into this set. 

\begin{lemma} \label{lem:sameDegree} 
Let $k$ and $\ell$ be positive integers satisfying $0 < \ell < \binom{k}{2}$, let $a = \ind(k, \ell)$, and suppose that $a>1/2$.
Let $n$ be a sufficiently large integer and let $G$ be a graph on $n$ vertices which attains $I(n,k,\ell)$. Then for any two vertices $v,w \in V(G)$ we have 
$$
\Pb\left(e_G(v, A_{G\setminus \{v,w\},k-1}) = e_G(w, A_{G\setminus \{v,w\},k-1})\right) > 2a - 1 - o_n(1).
$$
\end{lemma}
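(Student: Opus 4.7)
The plan is to leverage Lemma~\ref{lem:zykov} twice (once at $v$, once at $w$) to produce two events of probability close to $a$ defined on a common sample space, namely a uniformly random $(k-1)$-subset of $V(G) \setminus \{v,w\}$, and then apply a union bound.

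First, I would unfold what Lemma~\ref{lem:zykov} says when we further condition on $w \notin A_{G,k}$. Since $\Pb(w \in A_{G,k} \mid v \in A_{G,k}) = (k-1)/(n-1) = o_n(1)$, we get
\[
\Pb(X = \ell \mid v \in A_{G,k},\ w \notin A_{G,k}) = a + o_n(1),
\]
and symmetrically with the roles of $v$ and $w$ swapped. Conditioning on $\{v \in A_{G,k},\ w \notin A_{G,k}\}$ is equivalent to writing $A_{G,k} = \{v\} \cup A$ for a uniformly random $(k-1)$-subset $A$ of $V(G) \setminus \{v,w\}$, and under this identification $X = e(G[A]) + e_G(v, A)$. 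So, writing $A = A_{G\setminus\{v,w\},k-1}$, the two events
\[
E_v = \{e(G[A]) + e_G(v, A) = \ell\}, \qquad E_w = \{e(G[A]) + e_G(w, A) = \ell\}
\]
each have probability $a + o_n(1)$ on the same sample space.

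The final observation is the heart of the lemma: on $E_v \cap E_w$ the common term $e(G[A])$ cancels, forcing $e_G(v, A) = e_G(w, A)$. Hence
\[
\Pb(e_G(v, A) = e_G(w, A)) \;\geq\; \Pb(E_v \cap E_w) \;\geq\; \Pb(E_v) + \Pb(E_w) - 1 \;=\; 2a - 1 - o_n(1),
\]
which is the claimed bound (the assumption $a > 1/2$ ensures the bound is nontrivial). No step is really an obstacle here; the only thing to be careful about is that the conditioning ``$v \in A_{G,k}$'' and the conditioning ``$v \in A_{G,k},\ w \notin A_{G,k}$'' agree up to $o_n(1)$, which is immediate from $k/n = o_n(1)$ for fixed $k$ and $n \to \infty$.
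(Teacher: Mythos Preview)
Your proof is correct and follows essentially the same approach as the paper's: both apply Lemma~\ref{lem:zykov} at $v$ and at $w$, pass to the common sample space $A_{G\setminus\{v,w\},k-1}$ by conditioning on the other vertex being absent (an $o_n(1)$ perturbation), and then use the union bound on the two events $\{e(G[A])+e_G(v,A)=\ell\}$ and $\{e(G[A])+e_G(w,A)=\ell\}$.
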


\begin{proof}
By Lemma~\ref{lem:zykov} we have
\begin{equation} \label{eq:vNOTw}
\Pb(X = \ell \mid v \in A, w \notin A) = \Pb(X = \ell \mid v \in A) + o(1)  = a + o_n(1).
\end{equation}
and, symmetrically,
\begin{equation} \label{eq:wNOTv}
\Pb(X = \ell \mid w \in A, v \notin A) = a + o_n(1).
\end{equation}
Setting $G'' = G \setminus \{v, w\}$ and $B = A_{G\setminus \{v,w\},k-1}$, identities~\eqref{eq:vNOTw} and~\eqref{eq:wNOTv} imply that
$$
\Pb(X_{G'',k-1} + e_G(B,v) = \ell)  = a + o_n(1) \ \ \text{and} \ \ \Pb(X_{G'',k-1} + e_G(B,w) = \ell)  = a + o_n(1).
$$
Let $L_v$ (respectively, $L_w$) denote the event $X_{G'',k-1} + e_G(B,v) = \ell$ (respectively, $X_{G'',k-1} + e_G(B,w) = \ell$). Then
$$
\Pb(e_G(B, v) = e_G(B, w)) \geq \Pb(L_v \cap L_w) = \Pb(L_v) + \Pb(L_w) - \Pb(L_v \cup L_w) > 2a - 1 - o_n(1).
$$
\end{proof}
\begin{lemma} \label{lem:symmdiff}
For every $1/2 < a < 1$ there exists $C = C(a) > 0$ for which the following holds. Suppose that $k$ and $\ell$ are positive integers with $0 < \ell < \binom{k}{2}$ such that $a = \ind(k, \ell)$. Suppose that $n$ is sufficiently large and that $G$ is a graph on $n$ vertices which attains $I(n,k,\ell)$. Then for any two vertices $v,w \in V(G)$ we have 
\begin{align}~\label{eq:Lem23}
|N_G(v) \triangle N_G(w)| < Cn/k.
\end{align}
Moreover, as $a \rightarrow 1$, inequality~\eqref{eq:Lem23} holds with $C\rightarrow 0$.
\end{lemma}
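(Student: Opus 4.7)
The strategy is to apply Lemma~\ref{lem:sameDegree} with a carefully chosen random set and then exploit a parity argument. Let $S_v := N_G(v) \setminus (N_G(w) \cup \{v, w\})$, $S_w := N_G(w) \setminus (N_G(v) \cup \{v, w\})$, and $m := |S_v| + |S_w|$. Then $|N_G(v) \triangle N_G(w)| \le m + 2$ (the two sides differ by at most $\{v, w\}$, depending on whether $vw$ is an edge), so it suffices to bound $m$ in terms of $a$. Let $B$ be a uniformly random $(k-1)$-subset of $V(G) \setminus \{v, w\}$. Since common neighbours of $v$ and $w$ contribute equally to $e_G(v,B)$ and $e_G(w,B)$, Lemma~\ref{lem:sameDegree} gives $\Pb(Y_v = Y_w) > 2a - 1 - o_n(1)$, where $Y_v := |B \cap S_v|$ and $Y_w := |B \cap S_w|$.

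The core observation is a parity one: since $S_v$ and $S_w$ are disjoint, $J := Y_v + Y_w = |B \cap (S_v \cup S_w)|$ is hypergeometric with parameters $(n-2, m, k-1)$, and the event $\{Y_v = Y_w\}$ forces $J$ to be even. Hence $\Pb(J \text{ odd}) < 2(1-a) + o_n(1)$. Setting $\mu := (k-1)m/(n-2)$ and letting $n \to \infty$ with $\mu$ bounded, $J$ converges in distribution to $\mathrm{Poisson}(\mu)$, so $\Pb(J \text{ odd}) \to (1 - e^{-2\mu})/2$. For $a > 3/4$ this inverts to $\mu \le -\tfrac{1}{2} \log(4a - 3) + o_n(1)$, producing $m \le C_1(a)\,n/k$ with $C_1(a) := -\tfrac{1}{2}\log(4a-3) \to 0$ as $a \to 1$; this yields the ``moreover'' clause.

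For the complementary range $a \in (1/2, 3/4]$, where the parity bound is vacuous, I would supplement with a standard hypergeometric anti-concentration estimate. Assuming WLOG $|S_v| \ge |S_w|$, the variance of $Y_v$ is $\asymp (k-1)|S_v|/n \ge (k-1)m/(2n)$, so by the local limit theorem for the hypergeometric distribution, $\max_t \Pb(Y_v = t) = O(1/\sqrt{mk/n + 1})$. Using $\Pb(Y_v = Y_w) \le \max_t \Pb(Y_v = t \mid Y_w = t)$, together with the fact that conditioning on $Y_w = t$ keeps $Y_v$ hypergeometric with comparable variance (atypically large $Y_w$ handled by Chebyshev), one obtains $\Pb(Y_v = Y_w) = O(1/\sqrt{mk/n + 1})$; comparing with $2a - 1$ gives $m \le C_2(a)\,n/k$ with $C_2(a) = O((2a-1)^{-2})$, finite for every $a > 1/2$. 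Taking $C(a) := \min(C_1(a), C_2(a))$ (with the convention $C_1(a) := +\infty$ for $a \le 3/4$) proves both parts simultaneously.

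The main technical worry is the potential failure of the Poisson approximation when $\mu$ is itself unbounded; but in that case a direct variance estimate on $J$ (or the anti-concentration step above) already forces $\Pb(Y_v = Y_w) \to 0$, contradicting $a > 1/2$. Consequently one may safely restrict to bounded $\mu$ from the outset and apply the Poisson/local-limit machinery without further ado.
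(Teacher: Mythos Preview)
Your setup is identical to the paper's: reduce to bounding $\Pb(Y_v=Y_w)$ via Lemma~\ref{lem:sameDegree}, where $Y_v,Y_w$ count hits in the two halves of the symmetric difference. From there the arguments diverge. The paper conditions on the value of $J=Y_v+Y_w$ and proves the purely combinatorial inequality $\binom{|S_v|}{t}\binom{|S_w|}{t}\le \tfrac34\binom{m}{2t}$ (log-concavity of binomials), obtaining $\Pb(Y_v=Y_w\mid J>0)\le 3/4$ uniformly, with the constant improvable to any $\varepsilon$ once $J$ is large; this single device handles both the fixed-$a$ case and the $a\to 1$ case. Your parity observation $\{Y_v=Y_w\}\subseteq\{J\text{ even}\}$ combined with the Poisson formula $\Pb(J\text{ odd})\to(1-e^{-2\mu})/2$ is a genuinely different and slicker route to the ``moreover'' clause, giving the explicit bound $\mu\le -\tfrac12\log(4a-3)$; the paper never uses parity. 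For $a\in(1/2,3/4]$ your hypergeometric anti-concentration is in spirit exactly the second part of the paper's Claim (the $\varepsilon$-improvement for large $t$), just phrased via local limit estimates rather than binomial identities.

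Two small technical remarks. First, for fixed $k$ and $n\to\infty$ the limiting law of $J$ is $\mathrm{Bin}(k-1,p)$ rather than Poisson; the Poisson statement is only literally true after you also send $k\to\infty$. Since, as the paper notes, one may assume $k$ arbitrarily large (small $k$ is absorbed into $C$), this is harmless---and in fact $(1-2p)^{k-1}\le e^{-2\mu}$ for every $k$, so your inequality $\mu\le -\tfrac12\log(4a-3)$ survives for finite $k$ as well. Second, your anti-concentration sketch is correct but its cleanest execution is precisely the paper's: condition on $J=2t$, so that $Y_v\mid J=2t$ is hypergeometric with parameters $(m,|S_v|,2t)$, and bound its point probability at $t$; this avoids the side issues you flag about atypical $Y_w$ and about $|S_v|$ being close to $n$.
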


Note that the second part of the statement of Lemma~\ref{lem:symmdiff} (i.e., the one referring to $a \rightarrow 1$) is only needed to prove Theorem~\ref{thm:main} by contradiction, and is otherwise vacuous, as it contradicts Theorem~\ref{thm:main}. 

\begin{proof}
We can assume $k$ to be larger than any given absolute constant, since, in the first statement, the small values of $k$ can be accommodated for by adjusting $C$, and the second statement implicitly assumes $k \rightarrow \infty$, since $a = \ind(k,\ell) < 1$ for any given $0 < \ell < \binom{k}{2}$, and takes only finitely many values when $k$ is bounded.

Fix two arbitrary vertices $v,w \in V(G)$ and let 
$$
P = N_G(v) \setminus (w\cup N_G(w)), \  R = N_G(w) \setminus (v\cup N_G(v)) \ \  \text{and} \ \ Q = V(G) \setminus (\{v,w\} \cup P \cup R),
$$ 
Let $B = A_{G \setminus \{v,w\}, k-1}$. 
It then follows by Lemma~\ref{lem:sameDegree} that
\begin{equation}\label{eq:BcapPBcapR}
\Pb(|B\cap P| = |B\cap R|) > 2a-1-o_n(1).
\end{equation}
Suppose that 
$|P \cup R| \geq cn/k$ for some absolute constant $c > 0$. Then 
\begin{equation} \label{eq:BPR}
\Pb(B \cap (P \cup R) \neq \emptyset) \geq 1 - (1 + o(1))(1 - c/k)^{k-1} = 1 - e^{-c} + o_k(1) = \Omega_k(1).
\end{equation}
In particular, if $|P\cup R| =\omega(n/k)$, then $\Pb(B \cap (P \cup R) \neq \emptyset) = 1 - o_k(1)$.
Therefore, using~\eqref{eq:BcapPBcapR} we obtain 
\begin{align}\label{eq:capnonzero}
\Pb \left(|B \cap P| = |B \cap R| \, \Big\rvert \, B \cap (P \cup R) \neq \emptyset \right) 
&=\frac{\Pb(|B \cap P| = |B \cap R| \wedge B \cap (P \cup R) \neq \emptyset)}{\Pb(B\cap (P\cup R)\neq \emptyset)}\nonumber \\
&= \frac{2a-1-o(1)}{1-o_k(1)}=2a-1-o_k(1).
\end{align}

\noindent
Similarly, if $a=1-o_k(1)$ and $|P\cup R|=\Omega(n/k)$, then~\eqref{eq:BcapPBcapR} implies that 
$\Pb (|B \cap P| = |B \cap R|)=1-o_k(1)$. Therefore
\begin{align}\label{eq:capnonzero2}
\Pb \left(|B \cap P| = |B \cap R| \, \Big\rvert \, B \cap (P \cup R) \neq \emptyset \right) 
&=\frac{\Pb(|B \cap P| = |B \cap R| \wedge B \cap (P \cup R) \neq \emptyset)}{\Pb(B\cap (P\cup R)\neq \emptyset)}\nonumber \\
&{=}\frac{\Pb(B\cap (P\cup R)\neq \emptyset)-o_k(1)}{\Pb(B\cap (P\cup R)\neq \emptyset)}\nonumber \\
&\stackrel{\eqref{eq:BPR}}{=}1-o_k(1).
\end{align}
Note that the above argument does not make any use of the graph structure of $G$. Indeed, the situation at hand can be viewed as an urn model, in which we have a large urn filled with $|P|$ pink balls and $|R|$ red balls, and we draw a fixed but otherwise arbitrary number $1 \leq s \leq k-1$ of balls from the urn  uniformly at random, without replacement. We would like to upper bound the probability of drawing equally many pink balls and red balls. To this end, we first prove the following auxiliary claim.
\begin{claim}\label{cl:stirling}
For every integer $1 \leq t \leq \lfloor (k-1)/2 \rfloor$ we have 
$$
\Pb(|B \cap P| = |B \cap R| = t) \leq \frac{3}{4} \cdot \Pb\left(|B \cap (P \cup R)| = 2t \right).
$$
Moreover, for every $\varepsilon > 0$ there exists $t_0 = t_0(\varepsilon)$ such that for every $t \geq t_0$ we have
$$
\Pb(|B \cap P| = |B \cap R| = t) \leq \varepsilon \cdot \Pb\left(|B \cap (P \cup R)| = 2t \right).
$$
\end{claim}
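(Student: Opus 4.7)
The plan is to strip away the randomness outside $P\cup R$ and reduce the claim to a single hypergeometric estimate. Conditioning on the value of $|B\cap(P\cup R)|$, the set $B\cap(P\cup R)$ is distributed uniformly among the $2t$-subsets of $P\cup R$, so writing $p=|P|$ and $r=|R|$,
\[
\frac{\Pb(|B\cap P|=|B\cap R|=t)}{\Pb(|B\cap(P\cup R)|=2t)}=\frac{\binom{p}{t}\binom{r}{t}}{\binom{p+r}{2t}}.
\]
Both halves of the claim thus reduce to bounding this ``balanced split'' hypergeometric probability.

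For the first inequality, I would use that $p\mapsto \binom{p}{t}\binom{s-p}{t}$ is log-concave in $p$ for fixed $s=p+r$ (since $\binom{p}{t}$ itself is log-concave in $p$), so the ratio is maximised at the most balanced split $p\approx r\approx s/2$. In the tightest non-degenerate case $p=r=t+1$ one computes $\binom{t+1}{t}^2/\binom{2t+2}{2t}=(t+1)/(2t+1)\le 3/4$ for every $t\ge 1$. As $s$ increases further the ratio decreases towards its limit $\binom{2t}{t}/4^t$, which is strictly smaller, giving the $3/4$ bound throughout.

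For the ``moreover'' part, I would apply Stirling's approximation to this limit, using $\binom{2t}{t}/4^t=(1+o(1))/\sqrt{\pi t}$, together with a uniform Stirling estimate of the form $\binom{p}{t}\binom{r}{t}/\binom{p+r}{2t}=O(t^{-1/2})$ valid whenever $p+r-2t$ is not too small. This $O(t^{-1/2})$ quantity falls below any prescribed $\varepsilon$ once $t$ exceeds a suitable $t_0(\varepsilon)$.

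The main technical obstacle is the degenerate regime where $p+r$ is close to $2t$ --- in particular the boundary case $p=r=t$, where both probabilities coincide and the ratio equals $1$. I expect this is handled by the fact that the claim is invoked only inside the proof of Lemma~\ref{lem:symmdiff}, where the assumption $|P\cup R|\ge cn/k$ together with $t\le (k-1)/2$ forces $p+r-2t\to\infty$ as $n\to\infty$, so enough slack is available to make the log-concavity and Stirling arguments go through uniformly.
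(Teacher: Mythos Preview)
Your approach is essentially the same as the paper's: reduce to the ratio $\binom{p}{t}\binom{r}{t}/\binom{p+r}{2t}$, use log-concavity of binomial coefficients to pass to the balanced case $p\approx r$, and then bound the balanced ratio using that $|P\cup R|\gg t$ in the ambient setting of Lemma~\ref{lem:symmdiff}. The paper dispatches the second step by a direct ``straightforward calculation'' exploiting $t\ll |P|+|R|$, whereas you argue via monotonicity in $s=p+r$ from the worst non-degenerate case $s=2t+2$; both routes are valid, and your explicit identification of the degenerate case $p=r=t$ (and why the context rules it out) is exactly the point the paper is implicitly invoking.
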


\begin{proof}
Fix some $1 \leq t \leq \lfloor (k-1)/2 \rfloor$. By the log-concavity of the binomial coefficients we have 
$$
\binom{|P|}{t}\binom{|R|}{t} \leq \binom{\frac{|P|+|R|}{2}}{t}^2.
$$
Moreover, using the fact that $t$ is much smaller than $|P|+|R|$ (as $t<k$ and $|P|+|R| \geq cn/k$ by assumption, and thus can be assumed to be sufficiently large), straightforward calculations show that
$$
\binom{\frac{|P|+|R|}{2}}{t}^2 \leq \frac{3}{4} \binom{|P|+|R|}{2t}.
$$  
We conclude that
\begin{eqnarray*}
\Pb(|B\cap P|=|B\cap R|=t) &=& \frac{\binom{|P|}{t} \binom{|R|}{t} \binom{n - 2 - |P| - |R|}{k-1-2t}}{\binom{n-2}{k-1}} \leq \frac{3}{4} \cdot \frac{\binom{|P|+|R|}{2t} \binom{n - 2 - |P| - |R|}{k-1-2t}}{\binom{n-2}{k-1}} \\ 
&=& \frac{3}{4} \cdot \Pb\left(|B \cap (P \cup R)| = 2t \right).
\end{eqnarray*}
The second statement can be proved analogously; we omit the details.
\end{proof}

\noindent
Coming back to the proof of Lemma~\ref{lem:symmdiff}, using Claim~\ref{cl:stirling} with $a = 1-o_k(1)$ we have
\begin{align*}
&\Pb(|B\cap P| = |B\cap R| > 0) = \sum_{t=1}^{\lfloor(k-1)/2\rfloor} \Pb(|B\cap P| = |B\cap R|=t) \\ 
&\leq \frac{3}{4} \cdot \sum_{t=1}^{\lfloor(k-1)/2\rfloor} \Pb(|B\cap (P\cup R)|=2t) \leq \frac{3}{4} \cdot \Pb(|B\cap (P\cup R)| > 0).
\end{align*}
Therefore
$$
\Pb \left(|B\cap P| = |B\cap R| \, \Big\rvert \, B \cap (P\cup R) \neq \emptyset \right) \leq \frac{3}{4}, 
$$
contrary to~\eqref{eq:capnonzero2}. Similarly, for any $a > 1/2$, if $|P\cup R|=\omega(n/k)$, we obtain that for some $t = t(a)$ we have 
$$
\Pb \left(|B\cap P| = |B\cap R| \, \Big\rvert \, |B \cap (P\cup R)| > t \right) < \frac{1}{2} \cdot (2a-1), 
$$
contrary to~\eqref{eq:capnonzero}.
This concludes the proof of Lemma~\ref{lem:symmdiff}.
\end{proof}
Under closer inspection, Lemma~\ref{lem:symmdiff} has the following immediate consequence.
\begin{lemma} \label{lem:maxdeg}
For every $1/2 < a < 1$ there exists $C = C(a) > 0$ for which the following holds. Suppose that $k$ and $\ell$ are positive integers with $0 < \ell < \binom{k}{2}$ such that $a = \ind(k, \ell)$. Suppose that $n$ is sufficiently large and that $G$ is a graph on $n$ vertices which attains $I(n,k,\ell)$. Suppose that $e(G) \leq \binom{n}{2}/2$. Then  $\Delta(G) < Cn/k$.
Moreover, as $a\rightarrow 1$, the above holds with $C\rightarrow 0$. 
%
\end{lemma}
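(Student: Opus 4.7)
The plan is to combine Lemma~\ref{lem:symmdiff} with a short double-counting argument and then to split into two cases depending on the size of the maximum degree.

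First I would pick a vertex $v_0 \in V(G)$ of maximum degree, set $S = N_G(v_0)$, and write $s := |S| = \Delta(G)$. By Lemma~\ref{lem:symmdiff}, there is a constant $C' = C'(a)$ such that $|N_G(v) \triangle S| < C'n/k$ for every $v \in V(G)$; in particular $|S \setminus N_G(v)| < C'n/k$ for every such $v$. Swapping the order of summation yields
\[
s(n-s) \,\leq\, \sum_{u \in S}\bigl(n - d_G(u)\bigr) \,=\, \sum_{v \in V(G)} |S \setminus N_G(v)| \,<\, C'n^2/k,
\]
where the first inequality uses $d_G(u) \leq \Delta = s$ for every $u$. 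Therefore either $s < 2C'n/k$, in which case we are finished with $C = 2C'$, or $n - s < 2C'n/k$.

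To rule out the second case I would invoke the edge-count hypothesis. If $n - s < 2C'n/k$, applying Lemma~\ref{lem:symmdiff} to every pair $(v_0, v)$ gives $d_G(v) \geq s - C'n/k > n - 3C'n/k$ for all $v$, so summing degrees produces $e(G) > n^2/2 - 3C'n^2/(2k)$. Once $k$ is large enough in terms of $C'(a)$ this contradicts the hypothesis $e(G) \leq \binom{n}{2}/2$; for bounded $k$ we can absorb the finitely many cases into the choice of $C$. The \emph{moreover} assertion is then inherited: the corresponding moreover-part of Lemma~\ref{lem:symmdiff} gives $C'(a) \to 0$ as $a \to 1$, and hence $C = 2C'$ also tends to zero.

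I do not expect a substantive obstacle, consistent with the authors' description of the lemma as an ``immediate consequence'' of Lemma~\ref{lem:symmdiff}. The only point requiring a moment of thought is the case distinction $s \leq n/2$ versus $s > n/2$ coming from $s(n-s) = O(n^2/k)$, together with the observation that the sparse-side hypothesis $e(G) \leq \binom{n}{2}/2$ is precisely what is needed to exclude the near-complete regime.
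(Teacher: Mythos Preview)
Your argument is correct. The double-counting identity $\sum_{u\in S}(n-d_G(u))=\sum_{v\in V(G)}|S\setminus N_G(v)|$ holds exactly (both sides count pairs $(u,v)$ with $u\in S$ and $uv\notin E(G)$), and the case split on $s(n-s)<C'n^2/k$ together with the edge-count hypothesis cleanly disposes of the near-complete regime. The handling of small $k$ and of the ``moreover'' clause is also fine, since $C'\to 0$ forces the threshold $k>6C'$ to become vacuous.

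The paper proceeds differently: it picks a vertex $v$ of \emph{minimum} degree, sets $U=N_G(v)$ and $W=V(G)\setminus(U\cup\{v\})$, observes that $|U|\le|W|$ from $e(G)\le\binom{n}{2}/2$, and then double-counts $e(G[U,W])$. Lemma~\ref{lem:symmdiff} forces every $u\in U$ to have $O(n/k)$ neighbours in $W$ while every $w\in W$ has at least $|U|-O(n/k)$ neighbours in $U$, which is contradictory if $|U|=\omega(n/k)$; the bound on $\Delta(G)$ then follows since all degrees lie within $O(n/k)$ of $\delta(G)$. Your route is a bit more direct---you bound $\Delta(G)$ in one shot rather than via $\delta(G)$---while the paper's bipartite double count makes the role of the hypothesis $e(G)\le\binom{n}{2}/2$ visible earlier (it gives $|U|\le|W|$ up front rather than appearing only to exclude a terminal case). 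Both are short consequences of Lemma~\ref{lem:symmdiff} and neither has a real advantage over the other.
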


\begin{proof}
We prove the first statement -- the second can be proven analogously. It follows from Lemma~\ref{lem:symmdiff} that $|d(v) - d(w)| = O(n/k)$ holds for any $v, w \in V(G)$. Let $v$ be a vertex of minimum degree in $G$. Put $U := N_G(v)$ and $W := N_{\overline{G}}(v)$. Observe that $|U| \leq |W|$ since we assumed that $d(v) = \delta(G)$ and $e(G) \leq \binom{n}{2}/2$. Suppose for a contradiction that $|U| =\omega(n/k)$.
We double-count the edges of the bipartite graph $G[U,W]$. Applying Lemma~\ref{lem:symmdiff} to $v$ and $u$ for every $u \in U$, we derive that $d_{G[U,W]}(u) = O(n/k)$. In particular
$$
e(G[U,W]) = O \left(|U| \cdot \frac{n}{k} \right) = O \left(|W| \cdot \frac{n}{k} \right).
$$ 
On the other hand, applying Lemma~\ref{lem:symmdiff} to $v$ and $w$ for every $w \in W$, yields 
$$
d_{G[U,W]}(w) \geq |U| - O \left(\frac{n}{k} \right) \geq \omega \left(\frac{n}{k}\right) - O \left(\frac{n}{k} \right) = \omega \left(\frac{n}{k}\right).
$$ 
Therefore 
$$
|W| \cdot \omega \left(\frac{n}{k}\right) \leq e(G[U,W]) = O \left(|W| \cdot \frac{n}{k} \right),
$$ 
which is clearly a contradiction, and thus $\delta(G) = O(n/k)$. Since, moreover, $|d(v) - d(w)| = O(n/k)$ holds for any $v, w \in V(G)$ by Lemma~\ref{lem:symmdiff}, we conclude that $\Delta(G) = O(n/k)$ as claimed.  
\end{proof} 

From now on, we denote $e(G)$ by $m$. For every $e \in E(G)$, let $X_e$ be the indicator random variable for the event $e \in E(G[A])$, that is, $X_e = 1$ if both endpoints of $e$ are in $A$ and $X_e = 0$ otherwise. Observe that $X = \sum_{e \in E(G)} X_e$. Putting $\mu = \E(X)$ we have 
\begin{equation} \label{eq::expectation}
\mu = \sum_{e \in E(G)} \E(X_e) = m \cdot \frac{k(k-1)}{n(n-1)} = m \cdot \frac{(k)_2}{n^2} \cdot (1 + O(1/n)).
\end{equation}
\noindent
This has the following consequence. 
\begin{lem} \label{lem:avgdeg}
Let $k$ and $\ell$ be positive integers satisfying $0 < \ell < \binom{k}{2}$ and let $a = \ind(k, \ell)$. Let $n$ be a sufficiently large integer and let $G$ be a graph with $n$ vertices and $m$ edges which attains $I(n,k,\ell)$. Then
$$
m \geq (1 - o_k(1)) a \ell\frac{n^2}{k^2}.
$$ 
In particular, if $a = 1-o_k(1)$, then 
$$
m \geq (1 - o_k(1)) \frac{n^2}{k^2}.
$$
\end{lem}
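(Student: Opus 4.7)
The plan is to combine a one-line Markov-type observation with the exact formula~\eqref{eq::expectation} for $\mu = \E(X)$.

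Since $X$ is a non-negative integer-valued random variable and $\ell$ is one of its possible values, one has the trivial lower bound
\[
\mu = \sum_{j \geq 0} j \cdot \Pb(X = j) \;\geq\; \ell \cdot \Pb(X = \ell).
\]
Because $G$ attains $I(n,k,\ell)$, this probability is exactly $I(n,k,\ell)$, and by the monotonicity of $\{I(n,k,\ell)\}_n$ in $n$ noted in the introduction, it is at least the limit $a = \ind(k,\ell)$. Hence $\mu \geq a\ell$.

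Next, I would substitute this into~\eqref{eq::expectation}:
\[
a\ell \;\leq\; \mu \;=\; m \cdot \frac{(k)_2}{n^2} \cdot (1 + O(1/n)),
\]
so that $m \geq a\ell \cdot \frac{n^2}{(k)_2} \cdot (1 - O(1/n))$. Since $(k)_2 = k(k-1) \leq k^2$ and, for $n$ sufficiently large in terms of $k$, the error term $O(1/n)$ can be absorbed into $o_k(1)$, this yields the first claimed bound $m \geq (1 - o_k(1)) \, a \ell \, n^2/k^2$.

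For the ``in particular'' statement, I would substitute $a = 1 - o_k(1)$ into the first bound and use that $\ell \geq 1$, obtaining $m \geq (1 - o_k(1))^2 \ell \, n^2/k^2 \geq (1 - o_k(1)) \, n^2/k^2$. There is no substantive obstacle: the argument is a one-line Markov estimate followed by a routine substitution. The only minor subtlety is tracking the interplay between the $O(1/n)$ error in~\eqref{eq::expectation} and the desired $o_k(1)$ error, which is handled by interpreting ``$n$ sufficiently large'' as sufficiently large as a function of $k$.
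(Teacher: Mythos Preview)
Your proof is correct and follows essentially the same approach as the paper: both combine the Markov-type bound $\mu \geq \ell \cdot \Pb(X=\ell)$ with the expectation formula~\eqref{eq::expectation}. The paper phrases it as a proof by contradiction and uses $\Pb(X=\ell) = (1-o_n(1))a$, while you argue directly and use the slightly sharper $\Pb(X=\ell) = I(n,k,\ell) \geq a$ from monotonicity, but these are cosmetic differences.
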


\begin{proof}
Suppose for a contradiction that $m < (1 - \varepsilon) a \ell \left(n^2/k^2 \right)$ for some constant $\varepsilon > 0$. It then follows by~\eqref{eq::expectation} that $\mu < (1 - \varepsilon/2) a \ell$. On the other hand, since $X \geq 0$ and by the choice of $G$ we have $\mu \geq \ell \cdot \Pb(X = \ell) = (1 - o(1)) a \ell$, a contradiction.
\end{proof}

\noindent
Combining the above facts, we can immediately prove Proposition~\ref{prop:largelweak}.
\begin{proof}[Proof of Proposition~\ref{prop:largelweak}.]
Suppose for a contradiction that there exist $\varepsilon > 0$ and an integer $\ell = \ell(k)$ such that $Ck\leq \ell\leq \binom{k}{2} - Ck$ for some large $C>0$, and $\ind(k,\ell) = a > 1/2 + \varepsilon$. Let $G$ be a graph attaining $I(n,k,\ell)$, where $n$ is sufficiently large. By symmetry we may assume that $e(G) \leq \binom{n}{2}/2$. Then, on the one hand, by Lemma~\ref{lem:maxdeg} we have $\Delta(G) = O(n/k)$ entailing $e(G) = O(n^2/k)$. On the other hand, Lemma~\ref{lem:avgdeg} implies that 
$$
e(G) \geq (1/2 - o(1)) \ell \cdot \frac{n^2}{k^2} \geq \frac{C}{3}\cdot \frac{n^2}{k},
$$
which is a contradiction for large enough $C$. We conclude that $a \leq 1/2 + o(1)$ as claimed.
\end{proof}

\section{Proof of Theorem~\ref{thm:main}} \label{sec::main}

Our goal in this section is to prove Theorem~\ref{thm:main}. Since, as remarked in the introduction, $\ind(k,\ell)$ is never identically $1$ (assuming $0 < \ell < \binom{k}{2}$), it suffices to prove the theorem for large values of $k$. Hence, suppose now, for a contradiction, that the assertion of Theorem~\ref{thm:main} is false for arbitrarily large values of $k$. That is, for every $\varepsilon > 0$ there exist arbitrarily large values of $k$ such that for some $0 < \ell < \binom{k}{2}$ there will be arbitrarily large values of $n$ and graphs $G$ on $n$ vertices for which $\Pb(X_{G,k} = \ell) > 1 - \varepsilon$. We may assume that $G$ maximizes $\Pb(X_{G,k} = \ell)$ over all $n$-vertex graphs and, by symmetry, that $e(G) \leq \binom{n}{2}/2$. 

We would like to calculate the variance of $X_{G,k}$. Using Lemma~\ref{lem:maxdeg} and our assumption that $k$ is sufficiently large we obtain
\begin{align*}
\E(X^2) &= \E \left(\left(\sum_{e \in E(G)} X_e \right)^2 \right) = \mu + \sum_{\stackrel{(e,f) \in E(G)^2}{e \cap f = \emptyset}} \E(X_e X_f) + \sum_{\stackrel{(e,f) \in E(G)^2}{|e \cap f| = 1}} \E(X_e X_f)\\
&= \mu + m^2 \cdot \frac{(k)_4}{n^4} (1 + O_k(1/n)) + S \cdot \frac{(k)_3}{n^3} (1 + O_k(1/n)),
\end{align*}
where $S = \sum_{v \in V(G)} d(v)^2$. Therefore,
\begin{align*}
\Var(X) &= \E(X^2) - \mu^2 = \left(m \cdot \frac{(k)_2}{n^2} - m^2 \cdot \frac{(k)_2 \cdot (k)_2}{n^4} + m^2 \cdot \frac{(k)_4}{n^4} + S \cdot \frac{(k)_3}{n^3}\right) (1 + O_k(1/n))\\
&= \left(m \cdot \frac{(k)_2}{n^2} + S \cdot \frac{(k)_3}{n^3} + m^2 \cdot \frac{k(k-1)}{n^4} \cdot(k^2 - 5k + 6 - k^2 + k) \right) (1 + O_k(1/n))\\
&= \left(m \cdot \frac{k^2}{n^2} + S \cdot \frac{k^3}{n^3} - m^2 \cdot \frac{4 k^3}{n^4}\right) (1 + o(1)). 
\end{align*}

Since, by Lemma~\ref{lem:maxdeg} we have $m = o(n^2/k)$, and, consequently, $4 m^2 k^3/n^4 = o(m k^2/n^2)$, we can write 

\begin{equation}\label{eq:VarX}
\Var(X) = (1+o(1)) \left(m \cdot \frac{k^2}{n^2} + S \cdot \frac{k^3}{n^3}\right).
\end{equation}
 

We shall need the following anti-concentration inequality from~\cite{AGK}, involving the fourth moment of a random variable. 

\begin{lemma}[\cite{AGK}: Lemma 3.2(i)]\label{lem:agk}
Let $Y$ be a real random variable and suppose that its first, second and fourth moments satisfy $\mathbb{E}(Y) = 0$, $\mathbb{E} \left(Y^2 \right) = \sigma^2 > 0$ and $\mathbb{E} \left(Y^4 \right) \leq b \sigma^4$ for some constant $b > 0$. Then 
$$
\Pb(Y > 0) \geq \frac{1}{2^{4/3}b} \ \ \ \text{and} \ \ \ \Pb(Y < 0) \geq \frac{1}{2^{4/3}b}. 
$$
\end{lemma}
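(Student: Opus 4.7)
The plan is to prove $\Pb(Y > 0) \geq \frac{1}{2^{4/3} b}$; the symmetric bound on $\Pb(Y < 0)$ then follows by applying the same argument to $-Y$. Write $p = \Pb(Y > 0)$ and $Y^+ = \max(Y, 0)$. The key observation is that, since $\E(Y) = 0$, we have $\E(Y^+) = \E(|Y|)/2$, so it suffices to sandwich $\E(Y^+)$ from below using the second-moment hypothesis and from above using the fourth-moment hypothesis together with the unknown $p$.

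For the lower bound on $\E(|Y|)$, I would apply H\"older's inequality with conjugate exponents $3/2$ and $3$ to the factorisation $Y^2 = |Y|^{2/3}\cdot |Y|^{4/3}$:
\[
\sigma^2 \;=\; \E\!\left(|Y|^{2/3}\cdot |Y|^{4/3}\right) \;\leq\; \E(|Y|)^{2/3}\cdot \E(Y^4)^{1/3} \;\leq\; \E(|Y|)^{2/3}\,(b\sigma^4)^{1/3}.
\]
Rearranging gives $\E(|Y|) \geq \sigma/\sqrt{b}$, and hence $\E(Y^+) \geq \sigma/(2\sqrt{b})$.

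For the upper bound, I would apply H\"older with exponents $4$ and $4/3$ to the product $Y^+ = Y^+\cdot \mathbf{1}_{Y>0}$:
\[
\E(Y^+) \;\leq\; \E\!\left((Y^+)^4\right)^{1/4} \cdot p^{3/4} \;\leq\; (b\sigma^4)^{1/4}\cdot p^{3/4} \;=\; b^{1/4}\sigma\, p^{3/4}.
\]
Combining the two estimates yields $\sigma/(2\sqrt{b}) \leq b^{1/4}\sigma\cdot p^{3/4}$, i.e.\ $p^{3/4} \geq 1/(2 b^{3/4})$, and therefore $p \geq 1/(2^{4/3} b)$, as required.

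The only delicate point is the choice of H\"older exponents: the cruder pairing $\E(Y^+)\leq \sigma\sqrt{p}$ combined with $\E(|Y|)\geq \sigma/\sqrt{b}$ only gives $p \geq 1/(4b)$, so one must route the upper bound through the fourth moment (exponents $4$ and $4/3$) and the lower bound through the interpolation identity $2 = \tfrac{2}{3}\cdot 1 + \tfrac{1}{3}\cdot 4$ (exponents $3/2$ and $3$) in order to land on the sharper constant $2^{4/3}$.
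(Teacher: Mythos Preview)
Your proof is correct. Note, however, that the paper does not supply its own proof of this lemma: it is simply quoted from \cite{AGK} (Lemma~3.2(i) there) and used as a black box, so there is no in-paper argument to compare against. Your route---interpolating $\E(Y^2)$ between $\E|Y|$ and $\E(Y^4)$ via H\"older with exponents $(3/2,3)$, and then bounding $\E(Y^+)$ by $\E(Y^4)^{1/4}p^{3/4}$ via H\"older with exponents $(4,4/3)$---is exactly the standard way to obtain the constant $2^{4/3}$, and all steps are justified. The remark at the end about why the cruder Cauchy--Schwarz pairing only yields $1/(4b)$ is accurate and a nice touch.
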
 

The aim in the rest of this section is to prove the following lemma, which directly implies Theorem~\ref{thm:main}. 

\begin{lemma} \label{lem:truth}
There exists a constant $b > 0$ such that
\begin{equation} \label{eq:keylemma}
\E \left( \left(X_{G,k} - \mu \right)^4 \right) \leq b \cdot \Var(X_{G,k})^2.
\end{equation}
\end{lemma}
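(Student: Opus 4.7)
I would prove Lemma~\ref{lem:truth} by expanding the fourth central moment as a sum over ordered $4$-tuples of edges and analyzing each term by its combinatorial type. Writing $p = (k)_2/(n)_2 = \E(X_e)$ and $Y_e = X_e - p$, we have $X_{G,k} - \mu = \sum_{e \in E(G)} Y_e$ and hence
\begin{equation*}
\E\bigl((X_{G,k} - \mu)^4\bigr) = \sum_{(e_1, e_2, e_3, e_4) \in E(G)^4} \E(Y_{e_1} Y_{e_2} Y_{e_3} Y_{e_4}).
\end{equation*}
Since $\E\bigl(\prod_{i \in S} X_{e_i}\bigr) = (k)_{s(S)}/(n)_{s(S)}$, where $s(S)$ counts the distinct vertices in $\bigcup_{i \in S} V(e_i)$, the joint expectation $\E(Y_{e_1} \cdots Y_{e_4})$ depends only on the isomorphism type of the (ordered, edge-labeled) $4$-edge multigraph the tuple spans, and there are only finitely many such types.

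I would then split the $4$-tuples into \emph{split} tuples---those whose $4$-edge multiset partitions into two non-empty vertex-disjoint sub-multisets---and \emph{connected} tuples. For a split tuple with, say, $\{e_1, e_2\}$ vertex-disjoint from $\{e_3, e_4\}$, a short telescoping expansion comparing sampling with and without replacement yields
\begin{equation*}
\E(Y_{e_1}Y_{e_2}Y_{e_3}Y_{e_4}) = \E(Y_{e_1}Y_{e_2}) \cdot \E(Y_{e_3}Y_{e_4}) + O(p^{4}/k^2).
\end{equation*}
Summed over all ordered quadruples and the three possible pairings $(1,2)(3,4)$, $(1,3)(2,4)$, $(1,4)(2,3)$, the main terms reassemble via Isserlis's identity into $(3 + o(1))\,\Var(X)^2$, up to the diagonal $(e_i = e_j)$ contributions. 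The $O(p^4/k^2)$ corrections summed over the at most $m^4$ such tuples give $O(m^4 k^6/n^8)$, which is absorbed into $\Var(X)^2 \ge (Sk^3/n^3)^2 \ge 16\, m^4 k^6/n^8$ via the standard inequality $S \ge 4m^2/n$ and~\eqref{eq:VarX}.

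For the connected types---finitely many shapes including the single repeated edge, triangles, paths of length up to $3$, $4$-stars, $4$-cycles, bow-ties, and a handful of others---I would bound the number $N_\tau$ of ordered embeddings of each type $\tau$ in $G$ by an elementary polynomial in $m$, $S = \sum_v d(v)^2$, and $\Delta(G)$, using $\sum_v \binom{d(v)}{j} \le \Delta(G)^{j-2}\cdot S$ for $j \ge 2$ and the degree bound $\Delta(G) = o(n/k)$ from Lemma~\ref{lem:maxdeg}. Combined with a direct estimate of $|T_\tau|$ from the formula $\E(Y_{e_1}\cdots Y_{e_4}) = \sum_{S}(-p)^{4-|S|}(k)_{s(S)}/(n)_{s(S)}$, one checks case by case that each connected-type contribution $N_\tau |T_\tau|$ is dominated by one of the terms arising in $\Var(X)^2$ through~\eqref{eq:VarX}.

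The main obstacle is verifying the $O(p^4/k^2)$ cancellation in the split case: the crude bound on $\E(\prod_i Y_{e_i})$ based on $\E(\prod_i X_{e_i}) = (k)_s/(n)_s$ alone gives only $O(p^4/k)$, which combined with the $m^4$ count of vertex-disjoint quadruples would be too weak. Exhibiting the cancellation requires tracking the signed sum $\sum_{S \subseteq [4]} (-p)^{4-|S|}\E\bigl(\prod_{i \in S}X_{e_i}\bigr)$ to order $1/k^2$ and observing that the $O(1/k)$ and $O(1/n)$ terms mutually vanish. A secondary technical point is the tedious case analysis for each connected type, ensuring the contribution matches the appropriate component of $\Var(X)^2$.
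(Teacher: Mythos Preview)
Your approach is essentially the same as the paper's---expand the fourth moment over ordered $4$-tuples, classify by the isomorphism type of the spanned multigraph, and bound each contribution against the expansion~\eqref{eq:Varsq} of $\Var(X)^2$ using $\Delta(G)=o(n/k)$ from Lemma~\ref{lem:maxdeg} and $m\ge (1-o(1))n^2/k^2$ from Lemma~\ref{lem:avgdeg}. Your treatment of the connected types mirrors the paper's Cases (vii)--(xi) and (xiv)--(xix); the paper simply packages the estimate $|T_\tau| = O\bigl((k/n)^{s}\bigr)$ as a standalone claim (Claim~\ref{obs:cov}).

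Where you diverge is the attempted Isserlis shortcut for the ``split'' types, and here there is a genuine gap. Your factorization error bound
\[
\E(Y_{e_1}Y_{e_2}Y_{e_3}Y_{e_4}) - \E(Y_{e_1}Y_{e_2})\,\E(Y_{e_3}Y_{e_4}) = O(p^4/k^2)
\]
is only correct when all four edges are pairwise vertex-disjoint (the $4K_2$ type). For $2K_{1,2}$, say, both sides are $\Theta(k^6/n^6)$ and the difference is $\Theta(k^5/n^6)$, not $O(k^6/n^8)$; for a $(1,3)$ split such as $K_2+K_3$ the natural factoring gives $\E(Y_{e_1})\cdot\E(Y_{e_2}Y_{e_3}Y_{e_4})=0$, so the ``error'' is the full $\Cov\approx k^5/n^5$. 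Your absorption step, which sums $O(p^4/k^2)$ over $m^4$ tuples and invokes $S\ge 4m^2/n$, therefore does not cover these types. Relatedly, the ``three pairings reassemble into $3\Var(X)^2$'' step is unclear: each tuple contributes a single $\Cov$ term to the fourth moment, and the factorization gives one product, not three. The paper avoids all of this by treating every disconnected type on its own (Cases (i)--(vi), (xii), (xiii), (xviii)), computing $\Cov$ explicitly for $4K_2$ and $3K_2$ to exhibit the needed $1/k^2$ cancellation, and noting that the $2K_2+K_{1,2}$ contribution is actually negative. Once you handle the split types individually like this, there is no saving over the paper's direct case analysis.
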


Having established this, we conclude the proof Theorem~\ref{thm:main} as follows. By Lemma~\ref{lem:truth} the random variable $Y := X - \mu$ satisfies the assumptions of Lemma~\ref{lem:agk}. Since the event $\{X=\ell\}$ is disjoint either from the event $\{Y>0\}$ or from the event $\{Y<0\}$, using Lemma~\ref{lem:agk} we obtain $\Pb(X=\ell) \leq 1 - \frac{1}{2^{4/3}b} < 1 - \varepsilon$, contrary to our assumption that $\Pb(X = \ell) > 1 - \varepsilon$. 

\begin{proof}[Proof of Lemma~\ref{lem:truth}]
First let us expand the right hand side of~\eqref{eq:keylemma}.
\begin{align}\label{eq:Varsq}
\Var(X)^2 \stackrel{\eqref{eq:VarX}}{=} (1+o(1)) \left(m \cdot \frac{k^2}{n^2} + S \cdot \frac{k^3}{n^3}\right)^2 = \Theta \left(m^2 \cdot \frac{k^4}{n^4} + m S \cdot  \frac{k^5}{n^5} + S^2 \cdot \frac{k^6}{n^6}\right).
\end{align}

\noindent
Now, putting $p := {(k)_2}/{(n)_2}$, the left hand side of~\eqref{eq:keylemma} can be written as
\begin{equation}\label{eq:X4expanded}
\E \left( \left(X - \mu \right)^4 \right) = \E \left(\left(\sum_{e\in E(G)} \left(X_e - p \right) \right)^4\right) = \sum_{(e,f,g,h) \in E(G)^4} \Cov(e,f,g,h),
\end{equation}
where, by abuse of notation,  
\begin{equation}\label{eq:Cov}
\Cov(e,f,g,h) := \E \left((X_e-p)(X_f-p)(X_g-p)(X_h-p) \right).
\end{equation}

The following simple technical claim will play a crucial role in our proof of Lemma~\ref{lem:truth}. 
\begin{claim} \label{obs:cov} 
For every $4$-tuple $(e,f,g,h) \in E(G)^4$ we have
\[\Cov(e,f,g,h) = O(\E(X_e X_f X_g X_h)).\]
\end{claim}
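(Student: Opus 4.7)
The plan is to expand the defining product of $\Cov(e,f,g,h)$ in \eqref{eq:Cov} as
\[\Cov(e,f,g,h) = \sum_{T \subseteq \{e,f,g,h\}} (-p)^{4-|T|} \, \E\Bigl(\prod_{e' \in T} X_{e'}\Bigr),\]
and bound each of the $2^4 = 16$ resulting terms in absolute value by a constant multiple of $\E(X_e X_f X_g X_h)$. The single combinatorial input I need is the elementary identity that for any collection $S$ of edges of $G$, with $v_S$ denoting the number of distinct vertices in $\bigcup_{e' \in S} e'$, one has $\E\bigl(\prod_{e' \in S} X_{e'}\bigr) = (k)_{v_S}/(n)_{v_S}$, because $A$ is chosen uniformly among $k$-subsets of $V(G)$. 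In particular $\E(X_e X_f X_g X_h) = (k)_V/(n)_V$, where $V := v_{\{e,f,g,h\}} \in \{2,\ldots,8\}$.

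The crux is the vertex-counting inequality
\[V \leq v_T + 2(4-|T|),\]
which simply records that each of the $4-|T|$ edges missing from $T$ can contribute at most two vertices to the union. Substituting $p = (k)_2/(n)_2$, the $T$-term in the expansion has absolute value
\[\Bigl(\frac{(k)_2}{(n)_2}\Bigr)^{4-|T|} \cdot \frac{(k)_{v_T}}{(n)_{v_T}}.\]
Since $V \leq 8$ and $k$ is assumed sufficiently large throughout the proof of Theorem~\ref{thm:main}, one has $(k)_j = \Theta(k^j)$ and $(n)_j = \Theta(n^j)$ for every $j \leq 8$ with absolute implicit constants, so this expression is $\Theta\bigl((k/n)^{2(4-|T|)+v_T}\bigr)$. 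Combined with $k/n \leq 1$ and the displayed inequality, this yields
\[(k/n)^{2(4-|T|)+v_T} \leq (k/n)^V = \Theta\bigl(\E(X_e X_f X_g X_h)\bigr),\]
and summing over the $16$ subsets $T$ produces the claim with an absolute constant.

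I do not anticipate a serious obstacle here: the argument is uniform whether the four edges in the tuple are distinct, share endpoints, or even coincide, since it depends only on the aggregate vertex counts $V$ and $v_T$. The one substantive point is the inequality $V \leq v_T + 2(4-|T|)$, which is immediate; everything else is routine asymptotics of falling factorials $(k)_j$ with $j \leq 8$ bounded by an absolute constant.
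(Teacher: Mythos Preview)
Your proposal is correct and follows essentially the same approach as the paper: both expand $\Cov(e,f,g,h)$ into the $16$ terms $(-p)^{4-|T|}\E\bigl(\prod_{e'\in T}X_{e'}\bigr)$ and bound each by $O(\E(X_eX_fX_gX_h))$ using that $\E\bigl(\prod_{e'\in S}X_{e'}\bigr)=\Theta((k/n)^{v_S})$. The paper encodes the key inequality $V\le v_T+2(4-|T|)$ via an auxiliary graph $H'_B$ obtained by deleting an edge and attaching a new disjoint one (so the vertex count does not decrease), whereas you state the inequality directly; your formulation is slightly more transparent but the content is identical.
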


\begin{proof}
Fix some $4$-tuple $(e,f,g,h) \in E(G)^4$. Let $B$ be some subset of $\{e,f,g,h\}$. Let $H_B$ denote the graph spanned by the edges in $B$ and let $t = |V(H_B)|$. Let $H'_B$ be a graph obtained from $H_B$ by deleting one of its edges (together with any of its endpoints if its degree in $H$ is $1$), adding two new vertices, and connecting them by an edge. Let $t'$ denote the number of vertices of $H'_B$; clearly $t \leq t' \leq k$. Since $k \ll n$, a straightforward calculation show that
\begin{eqnarray*} 
\E \left(p \cdot \prod_{a \in E(H_B) \cap E(H'_B)} X_a \right) &=& \E \left(\prod_{a \in E(H'_B)} X_a \right) = O \left(\frac{k^{t'}}{n^{t'}} \right) \\ 
&=& O \left(\frac{k^t}{n^t} \right) = O \left(\E \left(\prod_{a \in E(H_B)} X_a \right) \right).
\end{eqnarray*}
Therefore, for every $B \subseteq \{e,f,g,h\}$ we have 
$$
p^{4 - |B|} \cdot \E \left(\prod_{a \in B} X_a \right) = O(\E(X_e X_f X_g X_h)).
$$
We conclude that 
$$
\Cov(e,f,g,h) = O(\E(X_e X_f X_g X_h)),
$$
as claimed.
\end{proof}

We now distinguish between several cases, depending on how the four edges $e,f,g,h$ are arranged. Suppose first that $e,f,g,h$ are pairwise distinct. Let $H$ be the four-edge graph spanned by $e, f, g$ and $h$. Note that $H$ is a subgraph of $G$, but \emph{not} necessarily an induced one. The options for $H$ are

\begin{itemize}
\item[(i)] $H = 4 K_2$;
\item[(ii)] $H = 2 K_2 + K_{1,2}$;
\item[(iii)] $H = 2 K_{1,2}$;
\item[(iv)] $H = K_2 + K_{1,3}$;
\item[(v)] $H = K_2 + P_4$;
\item[(vi)] $H = K_2 + K_{3}$;
\item[(vii)] $H = K_{1,4}$;
\item[(viii)] $H = K_{1,3}^+$ -- a $3$-star with one edge subdivided into two;
\item[(ix)] $H = P_{5}$;
\item[(x)] $H = C_4$;
\item[(xi)] $H = K_{3}^+$ -- a triangle with one pendant edge.
\end{itemize}

There are also several `degenerate' cases, in which two of the edges $e,f,g,h$ coincide; we will deal with these cases at the end of the proof. 

We claim that in each of the above listed cases, after some cancellation, the respective contributions can be bounded from above using the terms appearing on the right hand side of~\eqref{eq:Varsq}. As a helpful piece of notation, we denote the number of unlabelled (not necessarily induced) copies of $H$ in $G$ by $N(H)$.

\subsection*{Case (i)} $H = 4 K_2$.

Given a fixed $4$-tuple $(e,f,g,h)$ forming $H$, it follows by~\eqref{eq:Cov} that
\begin{align*}
\Cov(e,f,g,h) &= \E(X_e X_f X_g X_h) - 4 \E(X_e X_f X_g) p + 6 \E(X_e X_f) p^2 - 4 \E(X_e) p^3 + p^4\\
&= \frac{1}{n^8} \left[(k)_8 - 4 (k)_6 (k)_2 + 6 (k)_4 ((k)_2)^2 - 3 ((k)_2)^4 \right] (1 + O_k(1/n))\\
&= \frac{1}{n^8} [k^8 - 28 k^7 - 4 k^8 + 64 k^7 + 6 k^8 - 48 k^7 - 3 k^8 + 12 k^7 + O(k^6)]\\
&= O \left(\frac{k^6}{n^8} \right).
\end{align*}
Taking the sum over all such tuples $(e,f,g,h)$, we obtain an overall contribution of at most
\[O \left(\frac{m^4k^6}{n^8} \right) = O \left(m^2 \cdot \frac{k^4}{n^4} \right) = O \left(Var(X)^2 \right),\] 
where the penultimate equality holds since $m = O(n^2/k)$ follows from Lemma~\ref{lem:maxdeg} and the last equality holds by~\eqref{eq:Varsq}. 

\subsection*{Case (ii)} $H = 2 K_2 + K_{1,2}$.

Let $e$ and $f$ denote the two edges that share a vertex. It follows by~\eqref{eq:Cov} and a straightforward calculation that
\begin{align*}
\Cov(e,f,g,h) &= \left[\E(X_e X_f X_g X_h) - \E(X_e X_f X_g) p - \E(X_e X_f X_h) p + \E(X_e X_f) p^2 \right] (1 + O_k(1/n))\\
&= \frac{1}{n^7} \left[(k)_7 - 2 (k)_5 (k)_2 + (k)_3 ((k)_2)^2 \right] (1 + O_k(1/n))\\
&= \frac{1}{n^7} \left[k^7 - 21 k^6 - 2 k^7 + 22 k^6 + k^7 - 5 k^6 + O(k^5) \right] \\
&= \frac{1}{n^7}(-4 k^6 + O(k^5)) < 0.
\end{align*}
Thus, we can ignore such tuples $(e,f,g,h)$ as their contribution is negative. 

\subsection*{Case (iii)} $H = 2 K_{1,2}$.

\noindent
A straightforward calculation shows that $\Cov(e,f,g,h) = (1+o(1))(\E(X_e X_f X_g X_h)) = O({k^6}/{n^6})$.
Moreover, the total number of such $4$-tuples $(e,f,g,h)$ is at most $4! \cdot S^2$. Hence,
\[\sum_{(e,f,g,h) \cong H} \Cov(e,f,g,h) = O \left(S^2 \cdot \frac{k^6}{n^6} \right) = O \left(Var(X)^2 \right), \]
where the last equality holds by~\eqref{eq:Varsq}.

\subsection*{Case (iv)} $H = K_2 + K_{1,3}$. 

\noindent
It follows from Claim~\ref{obs:cov} that $\Cov(e,f,g,h) = O({k^6}/{n^6})$.
Note that a copy of $K_{1,3}$ in $G$ can be viewed as a copy of $P_3$ with an additional edge attached to its middle vertex. This implies that $N(K_{1,3}) = O(N(P_3) \cdot \Delta(G)) = O(S n/k)$, where the last equality holds by Lemma~\ref{lem:maxdeg}. Therefore, 
\[\sum_{(e,f,g,h) \cong H} \Cov(e,f,g,h) = O \left(m S \cdot k^5/n^5 \right) = O \left(Var(X)^2 \right),\]
where the last equality holds by~\eqref{eq:Varsq}. 

\noindent
\subsection*{Case (v)} $H = K_2 + P_4$.

\noindent
This case is analogous to Case (iv) with $P_4$ in place of $K_{1,3}$. Since a copy of $P_4$ in $G$ can be viewed as a copy of $P_3$ with another vertex connected by an edge to one of its end vertices, we have $N(P_4) = O(N(P_3) \cdot \Delta(G)) = O(S n/k)$, where the last equality holds by Lemma~\ref{lem:maxdeg}. We conclude that
$$
\sum_{(e,f,g,h) \cong H} \Cov(e,f,g,h) =  O\left(m \cdot N(P_4) \cdot k^6/n^6 \right) = O \left(m S \cdot k^5/n^5 \right) = O \left(Var(X)^2 \right),
$$
where the last equality holds by~\eqref{eq:Varsq}.

\subsection*{Case (vi)} $H = K_2 + K_3$.
\noindent
This is similar to cases (iv) and (v). It follows from Claim~\ref{obs:cov} that 
$$
\Cov(e,f,g,h) = O({k^5}/{n^5}).
$$
Hence,
$$
\sum_{(e,f,g,h) \cong H} \Cov(e,f,g,h) = O \left(m \cdot N(K_3) \cdot \frac{k^5}{n^5} \right) = O \left(m S \cdot k^5/n^5 \right) = O \left(Var(X)^2 \right),
$$
where the second equality holds since $N(K_3) = O(S)$ and the last equality holds by~\eqref{eq:Varsq}.

\subsection*{Case (vii)} $H = K_{1,4}$.

\noindent
It follows from Claim~\ref{obs:cov} that
\begin{equation} \label{eq:vii}
\sum_{(e,f,g,h) \cong H} \Cov(e,f,g,h) = O \left( N(K_{1,4}) \cdot \frac{k^5}{n^5} \right).
\end{equation}

Note that 
\[N(K_{1,4}) = O \left(\sum_{v \in V(G)} d(v)^4 \right)=  O \left(\Delta(G)^2 \cdot \sum_{v \in V(G)} d(v)^2 \right) = O \left(\frac{n^2}{k^2} \cdot S \right) = O(mS),\]
where the penultimate equality holds by Lemma~\ref{lem:maxdeg} and the last equality holds by Lemma~\ref{lem:avgdeg}. We conclude that
$$
\sum_{(e,f,g,h) \cong H} \Cov(e,f,g,h) = O \left(m S \cdot \frac{k^5}{n^5} \right) = O \left(Var(X)^2 \right), 
$$
where the last equality holds by~\eqref{eq:Varsq}. 

\subsection*{Case (viii)} $H = K_{1,3}^+$.

\noindent
This is very similar to Case (vii) with $K_{1,3}^+$ in place of $K_{1,4}$. Since a copy of $K_{1,3}^+$ in $G$ can be viewed as a copy of $P_3$ with two leaves attached to one of its end vertices, it follows that 
$$
N \left(K_{1,3}^+ \right) = O \left(S \cdot \Delta(G)^2 \right) = O(m S).
$$ 
Therefore
$$
\sum_{(e,f,g,h) \cong H} \Cov(e,f,g,h) = O \left(N \left(K_{1,3}^+ \right) \cdot \frac{k^5}{n^5} \right) = O \left(m S \cdot \frac{k^5}{n^5} \right) = O \left(Var(X)^2 \right), 
$$
where the last equality holds by~\eqref{eq:Varsq}.

\subsection*{Case (ix)} $H = P_5$.

\noindent
This is again very similar to Case (vii) with $P_5$ in place of $K_{1,4}$. Since a copy of $P_5$ in $G$ can be viewed as a copy of $P_3$ with a leaf attached to each of its end vertices, it follows that 
$$
N(P_5) = O \left(S \cdot \Delta(G)^2 \right) = O(m S).
$$ 
Therefore
$$
\sum_{(e,f,g,h) \cong H} \Cov(e,f,g,h) = O \left(N(P_5) \cdot \frac{k^5}{n^5} \right) = O \left(m S \cdot \frac{k^5}{n^5} \right) = O \left(Var(X)^2 \right), 
$$
where the last equality holds by~\eqref{eq:Varsq}.

\subsection*{Case (x)} $H = C_4$.
It follows from Claim~\ref{obs:cov} that
\[
\sum_{(e,f,g,h) \cong H} \Cov(e,f,g,h) = O \left(N(C_4) \cdot \frac{k^4}{n^4} \right).
\]
It is evident that $N(C_4) = O(N(P_4))$. Since, moreover, we can view $P_4$ as $P_3$ with an additional edge appended to one of its leaves, it follows that 
$$
N(P_4) = O \left(N(P_3) \cdot \Delta(G) \right) = O \left(S \cdot \Delta(G) \right) = O \left(S \cdot \frac{n}{k} \right),
$$  
where the last equality holds by Lemma~\ref{lem:maxdeg}. Therefore  
\begin{align*}
\sum_{(e,f,g,h) \cong H} \Cov(e,f,g,h) &= O \left(N(C_4) \cdot \frac{k^4}{n^4} \right) = O \left(S \cdot \frac{k^3}{n^3} \right) \\
&= O \left(m S \cdot \frac{k^5}{n^5} \right) = O \left(Var(X)^2 \right),
\end{align*}
where the penultimate equality holds by Lemma~\ref{lem:avgdeg} and the last equality holds by~\eqref{eq:Varsq}.

\subsection*{Case (xi)} $H = K_{3}^+$.
\noindent
Using the calculations made in Case (x) we obtain
$$
N \left(K_3^+ \right) = O(N(P_4)) = O \left(S \cdot \frac{n}{k} \right).
$$
It thus follows by Claim~\ref{obs:cov} that
$$
\sum_{(e,f,g,h) \cong H} \Cov(e,f,g,h) = O \left(N \left(K_{3}^+ \right) \cdot \frac{k^4}{n^4} \right) = O \left(S \cdot \frac{k^3}{n^3} \right) = O \left(m S \cdot \frac{k^5}{n^5} \right) = O \left(Var(X)^2 \right),
$$
where the last equality holds by~\eqref{eq:Varsq}.

\subsection*{The degenerate cases} We now consider the cases where some edge of $\{e,f,g,h\}$ appears more than once. In those cases, the corresponding graph $H$ has at most three edges.
\textbf{\begin{itemize}
\item[(xii)] $H = 3 K_2$
\item[(xiii)] $H = K_2 + K_{1,2}$ 
\item[(xiv)] $H = K_{1,3}$
\item[(xv)] $H = P_4$
\item[(xvi)] $H = K_3$
\item[(xvii)] $H=K_{1,2}$
\item[(xviii)] $H=2K_2$
\item[(xix)]  $H=K_2$
\end{itemize}}

\subsection*{Case (xii)} $H = 3K_2$.
\noindent
Without loss of generality, suppose that $e,f,g$ are independent and that $h=e$. Since, clearly $X_e^2 = X_e$, we obtain

\begin{align*}
& \Cov(e,f,g,h)=\E((X_e-p)^2(X_f-p)(X_g-p)) \\
&= \left(\E(X_e X_f X_g) - p \E(X_e X_f) - p \E(X_e X_g) + p^2 \E(X_e) \right) \left(1 + O_k(1/n) \right) \\
&= \frac{1}{n^6} \left((k)_6 - 2 (k)_2 (k)_4 + ((k)_2)^3 \right) \left(1 + O_k(1/n) \right) \\
&= \frac{-4k^5 + O(k^4)}{n^6} < 0.
\end{align*}
Thus, the total contribution of all such tuples is negative.

\subsection*{Case (xiii)} $H = K_2 + K_{1,2}$.
\noindent
It follows from Claim~\ref{obs:cov} that
$$
\sum_{(e,f,g,h) \cong H} \Cov(e,f,g,h) = O \left(m N(K_{1,2}) \cdot \frac{k^5}{n^5} \right) = O \left(m S \cdot \frac{k^5}{n^5} \right) = O \left(Var(X)^2 \right),
$$
where the last equality holds by~\eqref{eq:Varsq}.

\subsection*{Case (xiv)} $H = K_{1,3}$.

\noindent It follows from Claim~\ref{obs:cov} that
\begin{align*}
\sum_{(e,f,g,h) \cong H} \Cov(e,f,g,h) &= O \left(N(K_{1,3}) \cdot \frac{k^4}{n^4} \right) = O \left(\Delta(G) \cdot N(K_{1,2}) \cdot \frac{k^4}{n^4} \right) \\ 
&= O \left(S \cdot \frac{k^3}{n^3} \right) = O \left(m S \cdot \frac{k^5}{n^5} \right) = O \left(Var(X)^2 \right),
\end{align*}
where the third equality holds by Lemma~\ref{lem:maxdeg}, the fourth equality holds by Lemma~\ref{lem:avgdeg}, and the last equality holds by~\eqref{eq:Varsq}.

\subsection*{Case (xv)} $H = P_4$.

\noindent It follows from Claim~\ref{obs:cov} that
\begin{align*}
\sum_{(e,f,g,h) \cong H} \Cov(e,f,g,h) = O \left(N(P_4) \cdot \frac{k^4}{n^4} \right)
= O \left(S \cdot \frac{k^3}{n^3} \right) = O \left(m S \cdot \frac{k^5}{n^5} \right) = O \left(Var(X)^2 \right),
\end{align*}
where the second equality was proved in Case (x) and the last equality holds by~\eqref{eq:Varsq}.

\subsection*{Case (xvi)} $H = K_3$.
\noindent It follows from Claim~\ref{obs:cov} that
\begin{align*}
\sum_{(e,f,g,h) \cong H} \Cov(e,f,g,h) = O \left(N(K_3) \cdot \frac{k^3}{n^3} \right) = O \left(S \cdot \frac{k^3}{n^3} \right) = O \left(m S \cdot \frac{k^5}{n^5} \right) = O \left(Var(X)^2 \right),
\end{align*}
where the last equality holds by~\eqref{eq:Varsq}.

\subsection*{Case (xvii)} $H=K_{1,2}$.
\noindent It follows from Claim~\ref{obs:cov} that
\begin{align*}
\sum_{(e,f,g,h) \cong H} \Cov(e,f,g,h) = O \left(N(K_{1,2}) \cdot \frac{k^3}{n^3} \right) = O \left(S \cdot \frac{k^3}{n^3} \right) = O \left(m S \cdot \frac{k^5}{n^5} \right) = O \left(Var(X)^2 \right),
\end{align*}
where the last equality holds by~\eqref{eq:Varsq}.

\subsection*{Case (xviii)} $H=2K_2$.
\noindent It follows from Claim~\ref{obs:cov} that
\begin{align*}
\sum_{(e,f,g,h) \cong H} \Cov(e,f,g,h) = O \left(N(2K_2) \cdot \frac{k^4}{n^4} \right) = O \left(m^2 \cdot \frac{k^4}{n^4} \right) = O \left(Var(X)^2 \right),
\end{align*}
where the last equality holds by~\eqref{eq:Varsq}.

\subsection*{Case (xix)} $H=K_2$.
\noindent It follows from Claim~\ref{obs:cov} that
\begin{align*}
\sum_{(e,f,g,h) \cong H} \Cov(e,f,g,h) = O \left(m \cdot \frac{k^2}{n^2} \right) = O \left(m^2 \cdot \frac{k^4}{n^4} \right) = O\left(Var(X)^2 \right),
\end{align*}
where the last equality holds by~\eqref{eq:Varsq}.

To conclude, we have considered every possible case and in each one we have shown that its respective contribution to $\E \left((X-\mu)^4 \right)$ is of
order of magnitude $O \left(Var(X)^2 \right)$. Since the number of cases is constant, this completes the proof of Lemma~\ref{lem:truth} and thus also of Theorem~\ref{thm:main}.
\end{proof}

\section{Very small inducibility for almost all $\ell$} \label{sec:poly} 

Our aim in this section is to prove Theorem~\ref{thm:poly}. Let $k$ and $\ell$ be positive integers for which $\min \left\{k, \binom{k}{2} - \ell \right\} = \Omega \left(k^2 \right)$. Since, $\ind(k,\ell)$ is defined as $\lim_{n \to \infty} I(n, k, \ell)$ and the latter sequence is monotone decreasing, it suffices to show that $I(2k, k, \ell) = O \left(k^{-0.1} \right)$. 
Suppose then that we have a $2k$-vertex graph $G = (V,E)$, in which we are selecting a $k$-vertex set $A \subseteq V$ uniformly at random. As in the proofs in previous sections, we may assume that $G$ maximizes $\Pb(X_{G,k} = \ell)$ amongst all $2k$-vertex graphs.
\begin{claim}\label{cl:eGquadratic}
$\min \{e(G), e(\overline{G})\} = \Omega(k^2)$.
\end{claim}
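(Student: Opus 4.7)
The plan is to establish two symmetric bounds, $e(G) \geq \ell$ and $e(\overline{G}) \geq \binom{k}{2}-\ell$; since the hypothesis $\min\{\ell,\binom{k}{2}-\ell\}=\Omega(k^2)$ forces both quantities to be quadratic in $k$, these together yield the claim immediately.

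The starting observation is that the maximum value $\Pb(X_{G,k}=\ell)$ is strictly positive. Indeed, $G$ maximizes this probability over all $2k$-vertex graphs, and there is a $2k$-vertex graph $H$ with $\Pb(X_{H,k}=\ell)>0$: take any $k$-vertex graph with exactly $\ell$ edges (which exists since $0 \le \ell \le \binom{k}{2}$) together with $k$ isolated vertices; then the $k$-set consisting of the non-isolated vertices witnesses $\Pb(X_{H,k}=\ell) \geq 1/\binom{2k}{k} > 0$.

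It follows that some $k$-subset $A \subseteq V(G)$ satisfies $e(G[A])=\ell$, and because $G[A]$ is an induced subgraph of $G$ we obtain $e(G) \geq e(G[A]) = \ell$. For the complementary bound, I would use the identity $\Pb(X_{H,k}=\ell) = \Pb(X_{\overline{H},k}=\binom{k}{2}-\ell)$, which shows that $\overline{G}$ maximizes $\Pb(X_{\cdot,k}=\binom{k}{2}-\ell)$ over $2k$-vertex graphs. Applying the very same reasoning to $\overline{G}$ with parameter $\binom{k}{2}-\ell$ in place of $\ell$ then yields $e(\overline{G}) \geq \binom{k}{2}-\ell$.

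There is no real obstacle here; the argument is a direct counting consequence of the trivial inequality $e(G) \geq e(G[A])$ combined with the maximality of $G$. The only point worth checking carefully is that the maximum is strictly positive, which is handled by the explicit construction above, and that the hypothesis $\min\{\ell,\binom{k}{2}-\ell\}=\Omega(k^2)$ is used in the very last step to upgrade the linear-in-$\ell$ bound to the claimed $\Omega(k^2)$.
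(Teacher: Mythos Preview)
Your proof is correct and follows essentially the same approach as the paper: both observe that $I(2k,k,\ell)>0$, deduce that $G$ contains an induced $k$-subset spanning exactly $\ell$ edges, conclude $e(G)\ge\ell=\Omega(k^2)$, and then argue symmetrically for $\overline{G}$. Your version is simply more explicit about why the maximum is positive and about the complementation step.
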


\begin{proof}
%
Since $|G| = 2k$ and $G$ contains an induced subgraph of order $k$ and size $\ell = \Omega \left(k^2 \right)$ (as, clearly, $I(2k, k, \ell) > 0$), we have that $e(G) = \Omega \left(k^2 \right)$.
The statement for $\overline{G}$ follows analogously.
\end{proof}

A pair of distinct vertices $\{u,v\} \subseteq V$ will be called \emph{distinguished} if $|N_G(u) \triangle N_G(v)| = \Theta(k)$. Let $D = D(G)$ be the set of all distinguished pairs in $G$.
\begin{claim} \label{cl:distinguished}
$|D| = \Theta(k^2)$.
\end{claim}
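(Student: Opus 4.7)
The upper bound $|D| \leq \binom{2k}{2} = O(k^2)$ is immediate. For the lower bound, I would establish $|D| = \Omega(k^2)$ by a first-moment argument. By double-counting, writing $n = 2k$,
\[
\sum_{\{u,v\} \in \binom{V}{2}} |N_G(u) \triangle N_G(v)| \;=\; 2e(G) + \sum_{w \in V} d(w)(n-1-d(w)),
\]
since a vertex $w \notin \{u,v\}$ lies in $N_G(u) \triangle N_G(v)$ iff exactly one of $uw, vw$ is an edge. Because $|N_G(u) \triangle N_G(v)| \leq n$ deterministically, showing the right-hand side is $\Omega(k^3)$ would, by a standard averaging argument, force a positive constant fraction of pairs to have $|N_G(u) \triangle N_G(v)| \geq ck$ for some absolute $c > 0$, giving $|D| = \Omega(k^2)$.

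Thus the heart of the proof is to show $\sum_w d(w)(n-1-d(w)) = \Omega(k^3)$. Fixing a small $\varepsilon > 0$, I would partition $V$ into the low-degree set $L = \{w : d(w) < \varepsilon k\}$, the high-degree set $H = \{w : d(w) > n-1-\varepsilon k\}$, and the middle set $M = V \setminus (L \cup H)$. Each $w \in M$ contributes at least $\varepsilon k \cdot (n - 1 - 2\varepsilon k) = \Omega(k^2)$ to the sum, so it suffices to prove $|M| = \Omega(k)$.

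I would argue this by contradiction: assume $|M| = o(k)$. From $\sum_w d(w) \leq |L|\varepsilon k + |M| n + |H|(n-1)$ combined with $e(G) = \Omega(k^2)$ from Claim~\ref{cl:eGquadratic}, one deduces $|H| = \Omega(k)$; the symmetric computation applied to $\overline{G}$ yields $|L| = \Omega(k)$. Counting edges between $L$ and $H$ two ways: from the $H$ side, each $h \in H$ has fewer than $\varepsilon k$ non-neighbors, so $e(L,H) \geq |H|(|L| - \varepsilon k)$; from the $L$ side, each $\ell \in L$ has fewer than $\varepsilon k$ neighbors in total, so $e(L,H) \leq |L|\varepsilon k$. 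Combining gives $|L||H| \leq (|L|+|H|)\varepsilon k = O(\varepsilon k^2)$, which for $\varepsilon$ sufficiently small relative to the constants in Claim~\ref{cl:eGquadratic} contradicts $|L|, |H| = \Omega(k)$. The main obstacle is this structural case analysis, but no deep tool beyond Markov-type averaging is required; morally, $G$ cannot be essentially a union of a large clique and a large independent set with too few cross edges, and the two lower bounds of Claim~\ref{cl:eGquadratic} together are precisely what rule out such a polarized degree configuration.
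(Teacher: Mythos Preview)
Your argument is correct. Both you and the paper begin with the same double-counting identity (your formula $2e(G)+\sum_w d(w)(n-1-d(w))$ is algebraically identical to the paper's $2[(2k-1)e(G)-\sum_v\binom{d(v)}{2}]$) and both finish by the same averaging step: once the sum of symmetric differences is $\Omega(k^3)$ and each term is at most $2k$, a constant fraction of pairs must be distinguished.

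Where you diverge is in bounding $\sum_w d(w)(n-1-d(w))$ from below. The paper invokes the Ahlswede--Katona theorem: for given order and size, the cherry count $\sum_v\binom{d(v)}{2}$ is maximized by a clique plus isolated vertices (or its complement), and a direct computation in that extremal configuration, using both halves of Claim~\ref{cl:eGquadratic}, yields $\Omega(k^3)$. Your route is more elementary: you show directly that a positive fraction of vertices must have degree bounded away from both $0$ and $n-1$, by arguing that otherwise the vertex set polarizes into a large low-degree set $L$ and a large high-degree set $H$, and then the edge count $e(L,H)$ gives a contradiction. This is essentially a hands-on reproof of the relevant consequence of Ahlswede--Katona in this special setting. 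Your approach is self-contained and avoids citing an external extremal result; the paper's is shorter once that result is granted. One minor slip: the lower bound for $w\in M$ should read $\varepsilon k\cdot(n-1-\varepsilon k)$ rather than $\varepsilon k\cdot(n-1-2\varepsilon k)$, but this is still $\Omega(k^2)$ and does not affect the argument.
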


\begin{proof}
It is evident that $|D| = O(k^2)$. To see that $|D| = \Omega(k^2)$, note that 
\begin{align*}
\sum_{\{u,v\} \in \binom{V}{2}} |N(u) \triangle N(v)| &= \sum_{\{u,v\} \in \binom{V}{2}} (d(u) + d(v) - 2|N(u)\cap N(v)|)\\
&= (|V| - 1) \sum_{v \in V} d(v) - 2 \sum_{v \in V} \binom{d(v)}{2}\\
&= 2 \left[(2k-1) e(G) - \sum_{v \in V} \binom{d(v)}{2} \right].
\end{align*}
By a theorem of Ahlswede and Katona~\cite{AK}, for a graph $H$ of given order and size the quantity $\sum_v \binom{d(v)}{2}$, which corresponds to the number of cherries $K_{1,2}$, is maximized when either $H$ or its complement is a union of a clique and an independent set. In either case it follows by Claim~\ref{cl:eGquadratic} and a straightforward calculation that 
$$
\sum_{\{u,v\} \in \binom{V}{2}} |N(u) \triangle N(v)| \geq \sum_{\{u,v\} \in \binom{V(H)}{2}} |N_H(u) \triangle N_H(v)| = \Omega(k^3).
$$ 
Since the maximum degree of $G$ is $O(k)$, we conclude that $D(G) = \Omega(k^2)$, as claimed.
\end{proof}

\begin{claim}\label{cl:concentr}
For any graph $F$ with $2k$ vertices and $\Omega \left(k^2 \right)$ edges, and any integer $t$ satisfying $k - \sqrt{k} \leq t \leq k + \sqrt{k}$, we have 
$$
\Pb\left(X_{F,t} \leq \frac{e(F)}{5}\right) = o \left(k^{-1} \right).
$$
\end{claim}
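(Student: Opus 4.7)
The plan is to combine a routine mean computation with a sharp concentration inequality for $X_{F,t}$. By linearity of expectation,
$$
\mu := \E(X_{F,t}) = e(F) \cdot \frac{t(t-1)}{2k(2k-1)},
$$
and since $t = k + O(\sqrt{k})$ this ratio equals $\frac{1}{4}(1 + O(k^{-1/2}))$. Hence $\mu = (1 - o(1)) e(F)/4$, and because $e(F) = \Omega(k^2)$, for all sufficiently large $k$ one has $\mu - e(F)/5 \geq e(F)/25 = \Omega(k^2)$. Thus the claim reduces to showing that $\Pb\bigl(\mu - X_{F,t} \geq e(F)/25\bigr) = o(k^{-1})$, which I will obtain as a large-deviation estimate for the lower tail of $X_{F,t}$.

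To control this tail I would use a bounded-differences argument. Viewing $A$ as a uniformly random $t$-subset of $V(F)$, swapping $u \in A$ with $v \notin A$ changes $e(F[A])$ by at most $\max\{d_F(u, A\setminus\{u\}), d_F(v, A\setminus\{u\})\} \leq \Delta(F) \leq 2k-1$. Hence $X_{F,t}$ is $(2k-1)$-Lipschitz on the slice $\binom{V(F)}{t}$. Applying an Azuma--Hoeffding tail bound for sampling without replacement (e.g.\ via the Doob martingale obtained by revealing the $t$ sampled vertices one at a time), one obtains
$$
\Pb(|X_{F,t} - \mu| \geq s) \leq 2\exp\left(-\Omega\!\left(\frac{s^2}{t\,(2k-1)^2}\right)\right).
$$
Plugging in $s = e(F)/25 = \Omega(k^2)$ and $t = \Theta(k)$ makes the exponent $-\Omega(k^4/k^3) = -\Omega(k)$, so the bound is $\exp(-\Omega(k)) = o(k^{-1})$, as required.

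The main obstacle is subtle: Chebyshev's inequality by itself is just short of what is needed. A direct variance computation in the spirit of Section~\ref{sec::main} (adapted to the regime $n=2k$, where the simplification $4m^2k^3/n^4 = o(mk^2/n^2)$ no longer holds) gives only $\Var(X_{F,t}) = O(k \cdot e(F))$ in the worst case, yielding $\Pb(X_{F,t} \leq e(F)/5) = O(k^{-1})$. Gaining the extra factor of $k$ required here is precisely what the exponential Azuma--Hoeffding tail provides over the second moment alone; the Lipschitz constant $\Delta(F) \leq 2k-1$ is sufficiently small (compared with the deviation scale $\Omega(k^2)$) to make this work uniformly in $F$, without appealing to any particular structural feature of the graph.
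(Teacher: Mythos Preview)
Your proof is correct and takes a genuinely different route from the paper's. The paper argues vertex-by-vertex: for each $v\in V(F)$ it uses hypergeometric concentration to get $\Pb(|d(v,B)-d(v,\overline{B})|>k^{0.9})=o(k^{-2})$, then takes a union bound over all $2k$ vertices so that, with probability $1-o(k^{-1})$, every vertex is nearly balanced between $B$ and its complement; summing these balanced degrees forces $e(F[B])=(1/4+o(1))e(F)$. You instead bypass the per-vertex analysis entirely and apply a bounded-differences (Azuma--Hoeffding) inequality directly to the global statistic $X_{F,t}$, using the swap-Lipschitz constant $\Delta(F)\le 2k-1$. The trade-off is that the paper's argument is slightly more elementary (only hypergeometric tails, no martingale machinery) and makes the constants explicit, while yours is shorter and more robust---it would extend unchanged to weighted edge counts or other statistics with the same Lipschitz behaviour. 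Both approaches yield an $\exp(-\Omega(k))$ tail, far stronger than the $o(k^{-1})$ required. Your closing remark that Chebyshev alone falls just short (giving only $O(k^{-1})$) is also accurate and a useful observation.
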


\begin{proof}
Let $B \subseteq V(F)$ be a random set of size $t$ and let $\overline{B} = V(F) \setminus B$ be its complement. Fix an arbitrary vertex $v \in V$. It follows by standard tail estimates for the hypergeometric distribution that  
$$
\Pb\left(|d(v, B) - d(v, {\overline{B}})| > k^{0.9}\right) = o \left(k^{-2} \right)
$$
(in fact, the right hand side is exponentially small in a positive constant power of $k$, see for example~\cite[Theorem 2.10]{JLR}). Hence, by the union bound, with probability $1 - o \left(k^{-1} \right)$, it holds that $|d(v, B) - d(v, {\overline{B}})| \leq k^{0.9}$ for every $v \in V(F)$. Summing over all vertices in $B$, we obtain 
$$
|2 e(F[B]) - e(F[B, \overline{B}])| \leq \sum_{v \in B} |d(v, B) - d(v, {\overline{B}})| \leq 2 k^{1.9} = o(e(F)).
$$
Similarly, summing over all vertices in $\overline{B}$, we obtain 
$$
|2 e(F[\overline{B}]) - e(F[B, \overline{B}])| = o(e(F)).
$$
Hence, with probability $1 - o \left(k^{-1} \right)$ we have
\begin{align*}
e(F[B]) &= e(F[\overline{B}]) + o(e(F)) = \frac{1}{2} \cdot e(F[B, \overline{B}]) + o(e(F))\\
&= \left(1/4 + o(1) \right) e(F).
\end{align*}
In particular, $X_{F,t} = e(F[B]) \geq {e(F)}/{5}$ holds with probability $1 - o \left(k^{-1} \right)$.
\end{proof}

Now, let us sample the set $A$ in two steps as follows. We first sample a set $A_0 \subseteq V$ of size $k-k^{0.2}$ uniformly at random, then we sample a set $A_1 \subseteq V \setminus A_0$ of size $k^{0.2}$ in a manner which will be specified later, and finally we set $A = A_0 \cup A_1$. Note that $A_1$ will be sampled in a way which will ensure that the resulting set $A$ will indeed be chosen uniformly amongst all subsets of $V(G)$ of size $k$. 

Given $A_0$, a distinguished pair $\{u,v\}$ is said to be \emph{bad} if $|d(u, A_0) - d(v, A_0)| \leq k^{0.4}$, and \emph{good} otherwise. Our next claim shows that, with sufficiently high probability, most distinguished pairs are good. 
\begin{claim} \label{cl:FewBadPairs}
With probability $1 - O \left(k^{-0.1} \right)$ there are at most $|D|/6$ bad pairs.
\end{claim}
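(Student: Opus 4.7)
The plan is a first-moment argument: I would show that each individual distinguished pair is bad with probability $O(k^{-0.1})$, so that by linearity of expectation the expected number $B$ of bad pairs satisfies $\E[B] = O(|D| \cdot k^{-0.1})$; Markov's inequality then yields $\Pb(B \geq |D|/6) \leq 6 \E[B]/|D| = O(k^{-0.1})$, as required.

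Fix a distinguished pair $\{u,v\}$ and set $P = N(u) \setminus N(v)$, $R = N(v) \setminus N(u)$, so $|P|+|R| = |N(u) \triangle N(v)| = \Theta(k)$. Defining the signed weight $\sigma : V \to \{+1, -1, 0\}$ by $\sigma(x) = +1$ on $P$, $-1$ on $R$ and $0$ elsewhere, the quantity of interest becomes
\[
Z := d(u, A_0) - d(v, A_0) = |P \cap A_0| - |R \cap A_0| = \sum_{x \in A_0} \sigma(x),
\]
a sum of $k - k^{0.2}$ values sampled without replacement from a multiset of $\pm 1$'s and $0$'s on the $2k$-element set $V$. The individual-pair bound I need is $\Pb(|Z| \leq k^{0.4}) = O(k^{-0.1})$. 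A direct computation using the standard hypergeometric variance and covariance formulas gives
\[
\E[Z] = \tfrac{|P|-|R|}{2}(1+o(1)), \qquad \Var(Z) = \Theta(1/k) \cdot \bigl[\, 4|P||R| + (|P|+|R|)(2k - |P| - |R|) \,\bigr].
\]

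The main obstacle is converting this moment information into a genuine anti-concentration estimate for $Z$, which is not itself hypergeometric but a difference of two correlated hypergeometric variables. I would split into two cases. If $\Var(Z) = \Omega(k)$ (the main case), I would invoke a standard local central limit theorem for sums sampled without replacement from a bounded multiset -- provable either by direct Stirling-type estimates on binomial coefficients or by coupling to a sum of independent Bernoullis at rate $p = (k-k^{0.2})/(2k) = \tfrac{1}{2} + o(1)$ -- to obtain $\max_z \Pb(Z = z) = O(1/\sqrt{\Var(Z)}) = O(1/\sqrt{k})$; summing over the $O(k^{0.4})$ integer points of $[-k^{0.4}, k^{0.4}]$ then gives $\Pb(|Z| \leq k^{0.4}) = O(k^{-0.1})$. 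If instead $\Var(Z) = o(k)$, the variance formula forces both summands to be $o(k^2)$: the condition $|P||R| = o(k^2)$ together with $|P|+|R| = \Theta(k)$ yields (WLOG) $|R| = o(k)$ and $|P| = \Theta(k)$, and then $(|P|+|R|)(2k-|P|-|R|) = o(k^2)$ forces $|P| = 2k - o(k)$. Hence $|\E[Z]| = (|P|-|R|)/2 + o(k) = k - o(k) \gg k^{0.4}$, and Chebyshev's inequality yields
\[
\Pb(|Z| \leq k^{0.4}) \leq \Pb(|Z - \E[Z]| \geq k/2) \leq \tfrac{4 \Var(Z)}{k^2} = o(1/k) = o(k^{-0.1}).
\]
This establishes the individual-pair bound in both cases, after which the opening first-moment/Markov argument completes the proof.
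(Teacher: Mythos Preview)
Your overall plan---show each distinguished pair is bad with probability $O(k^{-0.1})$ and then apply Markov's inequality---is exactly what the paper does. The difference is in how the individual-pair bound is obtained. The paper conditions on $t := |A_0 \cap (N(u)\triangle N(v))|$; given this, $|A_0 \cap P|$ is a genuine hypergeometric random variable with parameters $(|P|+|R|,\,|P|,\,t)$, and ``bad'' becomes the event that this hypergeometric lands in an interval of length $O(k^{0.4})$ around $t/2$. Since (without loss of generality) $|P|=\Theta(k)$, and $t=\Theta(k)$ with probability $1-O(k^{-1})$ by hypergeometric tails, the relevant point masses are $O(k^{-1/2})$ by a direct Stirling computation, and summing over $O(k^{0.4})$ values gives $O(k^{-0.1})$. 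This conditioning trick cleanly sidesteps the fact that your $Z$ is a \emph{difference} of correlated hypergeometrics rather than a hypergeometric itself.

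Your route---compute $\Var(Z)$ explicitly, split on whether it is $\Omega(k)$ or $o(k)$, and in the first case invoke a local CLT for sampling without replacement---is correct, and your Case~2 analysis via Chebyshev is clean and accurate. The cost is that the local CLT is a nontrivial black box: the ``coupling to independent Bernoullis at rate $1/2$'' you mention does not immediately deliver a pointwise $O(1/\sqrt{\Var Z})$ bound (the coupling error competes with the bound you want), so a rigorous version would require either citing a result such as Vatutin--Mikha\u{\i}lov or, in effect, redoing the paper's conditioning argument. In short, your argument is correct but less self-contained; the paper's device of conditioning on $|A_0\cap(P\cup R)|$ is worth noting as the elementary way to reduce to a single hypergeometric and avoid the black box entirely.
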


\begin{proof}
Fix some distinguished pair $\{u,v\}$ and let $X_{uv} = 1$ if $\{u,v\}$ is bad and $X_{uv} = 0$ otherwise. Let $Y_{uv} = |A_0 \cap (N(u) \triangle N(v))|$ and let $s = |N(u) \triangle N(v)|$; recall that $\{u,v\}$ is a distinguished pair and thus $s = \Theta(k)$. Then
\begin{align} \label{eq::LTP}
\Pb(X_{uv} = 1) &= \sum_{t = 0}^s \Pb(X_{uv} = 1 | Y_{uv} = t) \cdot \Pb(Y_{uv} = t) \nonumber \\ 
&= \sum_{t = \varepsilon k}^{0.9 s} \Pb(X_{uv} = 1 | Y_{uv} = t) \cdot \Pb(Y_{uv} = t) + O \left(k^{-1} \right),
\end{align}
where the first equality holds by the law of total probability and the second equality holds for a sufficiently small constant $\varepsilon > 0$ since, by standard tail estimates for the hypergeometric distribution, we have that $\Pb(Y_{uv} \leq \varepsilon k) = O \left(k^{-1} \right)$ and $\Pb(Y_{uv} \geq 0.9 s) = O \left(k^{-1} \right)$.  

Assume without loss of generality that $|N(u) \setminus N(v)| \geq |N(v) \setminus N(u)|$; observe that by the definition of a distinguished pair it thus follows that $r := |N(u) \setminus N(v)| = \Theta(k)$. Let $Z_{uv} = |A_0 \cap (N(u) \setminus N(v))|$. Observe that for any $\varepsilon k \leq t \leq s$ we have 
\begin{equation} \label{eq::Zuv}
\Pb(X_{uv} = 1 | Y_{uv} = t) \leq \sum_{i = (t - k^{0.4})/2}^{(t + k^{0.4})/2} \Pb(Z_{uv} = i | Y_{uv} = t). 
\end{equation}
Note that $Z_{uv} | Y_{uv} = t$ is a hypergeometric random variable with parameters $s$, $t$ and $r$. Since, moreover, $\mathbb{E}(Z_{uv}) = r t/s = \Theta(k)$, a straightforward calculation shows that for every $(t - k^{0.4})/2 \leq i \leq (t + k^{0.4})/2$ we have 
\begin{equation} \label{eq::HGanticon}
\Pb(Z_{uv} = i | Y_{uv} = t) = O \left(k^{-1/2} \right).
\end{equation}
Combining~\eqref{eq::LTP}, \eqref{eq::Zuv}, and~\eqref{eq::HGanticon} we obtain  
\begin{equation} \label{eq::LowProbForBadPair}
\Pb(X_{uv} = 1) = O \left(k^{- 0.1} \right).
\end{equation}

Now, let $X = \sum_{\{u,v\} \in D} X_{uv}$ be the random variable which counts the number of bad pairs in $D$. It follows by~\eqref{eq::LowProbForBadPair} that 
$$
\mathbb{E}(X) = \sum_{\{u,v\} \in D} \mathbb{E}(X_{uv}) = O \left(k^{1.9} \right).
$$
Applying Markov's inequality to $X$ we conclude that $\Pb(X \geq |D|/6) = O \left(k^{-0.1} \right)$.
\end{proof}

\begin{claim}\label{cl:XandY}
With probability $1 - O \left(k^{-0.1} \right)$ the set $A_0$ will have the following property: there exist disjoint sets $X, Y \subseteq V \setminus A_0$ such that $|X| = |Y| = \Theta(k)$ and $d(x,A_0) - d(y,A_0) \geq k^{0.4}$ holds for every $x \in X$ and $y \in Y$.
\end{claim}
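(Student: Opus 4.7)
The plan is to first reduce the claim to the existence of many \emph{good distinguished} pairs entirely inside $V\setminus A_0$, and then to extract $X$ and $Y$ from the sorted degree sequence of $V\setminus A_0$ into $A_0$.

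Let $W$ denote the number of pairs in $D$ with both endpoints in $V\setminus A_0$. Since $|V\setminus A_0|/|V|=1/2+o(1)$, a direct computation gives $\E(W)=(1/4+o(1))|D|=\Theta(k^2)$. A standard concentration argument---either a Chebyshev estimate (covariance between two disjoint pair-indicators is $O(1/k)$, and there are $O(k^3)$ ordered pairs of distinguished pairs sharing a vertex, each contributing $O(1)$, whence $\Var(W)=O(k^3)$) or a bounded-differences inequality (since each vertex of $A_0$ lies in at most $O(k)$ distinguished pairs)---yields $W\geq (1-\varepsilon)\E(W)$ for any fixed $\varepsilon>0$ with probability $1-O(1/k)$. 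Intersecting this event with the conclusion of Claim~\ref{cl:FewBadPairs} (at most $|D|/6$ bad pairs, with probability $1-O(k^{-0.1})$), we see that with probability $1-O(k^{-0.1})$ there are at least $c_0 k^2$ good distinguished pairs contained in $V\setminus A_0$, for some absolute constant $c_0>0$.

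Now fix such an $A_0$ and order the vertices of $V\setminus A_0$ as $v_1,\ldots,v_N$ with $N=k+k^{0.2}$ and $d(v_1,A_0)\leq\cdots\leq d(v_N,A_0)$. Choose a constant $c\in(0,c_0/3)$, set $t=\lfloor ck\rfloor$, $Y=\{v_1,\ldots,v_t\}$, and $X=\{v_{N-t+1},\ldots,v_N\}$. If $d(v_{N-t+1},A_0)-d(v_t,A_0)\geq k^{0.4}$, then $|X|=|Y|=t=\Theta(k)$ and $d(x,A_0)-d(y,A_0)\geq k^{0.4}$ for every $x\in X$ and $y\in Y$, as required. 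Otherwise, the $N-2t+2$ vertices $v_t,v_{t+1},\ldots,v_{N-t+1}$ all have degrees into $A_0$ lying in an interval of length less than $k^{0.4}$; in particular, every good pair inside $V\setminus A_0$ must have at least one endpoint in the complementary set $\{v_1,\ldots,v_{t-1}\}\cup\{v_{N-t+2},\ldots,v_N\}$ of size less than $2ck$. The number of such pairs is at most $2ck\cdot N\leq 3ck^2<c_0 k^2$ for $k$ large, contradicting the lower bound on good distinguished pairs in $V\setminus A_0$. Hence the first alternative must hold, and $X,Y$ with the required properties exist.

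I expect the main technical hurdle to be the concentration step for $W$, where the covariances between pair-indicators must be handled carefully according to whether the pairs are disjoint or share a vertex. The combinatorial extraction of $X$ and $Y$ is, by contrast, essentially a sort-and-pigeonhole argument: either the extremal top and bottom $\Theta(k)$ slices of the degree sequence are already separated by at least $k^{0.4}$, or else the bulk of $V\setminus A_0$ is so tightly clustered that it cannot host $\Omega(k^2)$ good pairs.
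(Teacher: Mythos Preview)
Your proof is correct and follows essentially the same approach as the paper: establish that $\Omega(k^2)$ good distinguished pairs lie inside $V\setminus A_0$, then sort by degree into $A_0$ and take the top and bottom $\Theta(k)$ slices, arguing by contradiction that the gap must be at least $k^{0.4}$. The only difference is cosmetic: for the concentration of $W$ the paper simply invokes the already-established Claim~\ref{cl:concentr} (applied to the auxiliary graph with edge set $D$ and $t=k+k^{0.2}$) to get $W\geq |D|/5$ with probability $1-o(k^{-1})$, whereas you redo this step via Chebyshev or bounded differences---both valid, and yielding the same conclusion.
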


\begin{proof}
Recall that $|D| = \Theta \left(k^2 \right)$ holds by Claim~\ref{cl:distinguished}. By considering an auxiliary graph $F$ with $V(F) = V$ and $E(F) = \{uv : \{u,v\} \in D\}$ to which we apply Claim~\ref{cl:concentr}, we infer that, with probability $1 - o(k^{-1})$, at least $|D|/5$ of the distinguished pairs are disjoint from $A_0$. On the other hand, it follows by Claim~\ref{cl:FewBadPairs} that, with probability $1 - O \left(k^{-0.1} \right)$, at most $|D|/6$ of the distinguished pairs are bad. We conclude that, with probability $1 - O \left(k^{-0.1} \right)$, there is a set $D' \subseteq D\cap \binom{V \setminus A_0}{2}$ of size $c k^2$, for a constant $c>0$, such that $|d(u, A_0) - d(v, A_0)| \geq k^{0.4}$ for every $\{u,v\} \in D'$.

Let $(u_1, \ldots u_{k+k^{0.2}})$ be an ordering of the vertices of $V\setminus A_0$ by non-increasing order of degrees into $A_0$, that is, $d(u_i, A_0) \geq d(u_j, A_0)$ for every $1 \leq i < j \leq k+k^{0.2}$. Let $q = c k/3$, let $X = \{u_1, \ldots, u_q\}$, and let $Y = \{u_{k+k^{0.2}-q+1}, \ldots, u_{k+k^{0.2}}\}$. Observe that $X \cap Y = \emptyset$ and $|X| = |Y| = q = \Theta(k)$. It thus remains to prove that $d(x, A_0) - d(y, A_0) \geq k^{0.4}$ holds for every $x \in X$ and $y \in Y$. Suppose for a contradiction that there exist $x \in X$ and $y \in Y$ such that $d(x, A_0) - d(y, A_0) < k^{0.4}$. Since, by definition, $d(x, A_0) \geq d(u, A_0) \geq d(y, A_0)$ for every $u \in V\setminus (A_0\cup X\cup Y)$, it follows that $\{u,v\} \cap (X \cup Y) \neq \emptyset$ for every $\{u,v\} \in D'$. Therefore
\begin{align*}
c k^2 = |D'| &\leq \binom{|X\cup Y|}{2} + |X \cup Y| \cdot |V \setminus (A_0\cup X\cup Y)| \\
&\leq 2q(q+k+k^{0.2}-2q)\leq 2 c k/3 \cdot k < c k^2,
\end{align*}
which is clearly a contradiction.   
\end{proof}

Let $A_0$ be a set chosen randomly as described above. Let $X$ and $Y$ be disjoint subsets of $V \setminus A_0$ such that $|X| = |Y| = c k$ for some $c > 0$, and $d(x,A_0) - d(y,A_0) \geq k^{0.4}$ holds for every $x \in X$ and $y \in Y$; such sets exist with probability $1 - O \left(k^{- 0.1} \right)$ by Claim~\ref{cl:XandY}. Now, we choose a random set $A_1 \subseteq V \setminus A_0$ of size $k^{0.2}$ as follows. First, we choose $k^{0.2}$ pairwise-disjoint vertex pairs from $V\setminus A_0$ uniformly at random. Then, from each such pair, we choose uniformly at random exactly one element to be in $A_1$; all choices being mutually independent.    

\begin{claim}\label{cl:MK5}
Let $A_1$ be chosen randomly as described above and let $A = A_0 \cup A_1$. Then, for any integer $\ell$, we have $\Pb(e_G(A) = \ell) = O \left(k^{- 0.1} \right)$. 
\end{claim}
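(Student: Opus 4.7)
The plan is to apply Sperner's theorem to the residual randomness coming from the second stage of the sampling procedure. First, we condition on the event guaranteed by Claim~\ref{cl:XandY}, which holds with probability $1 - O(k^{-0.1})$ and furnishes disjoint sets $X, Y \subseteq V \setminus A_0$ of size $\Theta(k)$ with $d(x, A_0) - d(y, A_0) \geq k^{0.4}$ for every $x \in X$ and $y \in Y$. After sampling the $k^{0.2}$ disjoint pairs in $V \setminus A_0$, call a pair \emph{good} if one of its vertices lies in $X$ and the other in $Y$. Since $|X|, |Y| = \Theta(k)$ and $|V \setminus A_0| = k + k^{0.2}$, each sampled pair is good with probability bounded away from $0$, so the number $M$ of good pairs has expectation $\Theta(k^{0.2})$; a standard second-moment computation together with Chebyshev's inequality then yields $M \geq c k^{0.2}$ for some absolute constant $c > 0$ with probability $1 - O(k^{-0.2})$, which we add to our conditioning.

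After additionally conditioning on the vertex chosen within every non-good pair, the only remaining randomness consists of independent uniform bits $\xi_1, \ldots, \xi_M \in \{0,1\}$, where $\xi_i = 1$ indicates that the $X$-vertex $x_i$ of the $i$-th good pair was placed into $A_1$ and $\xi_i = 0$ that the $Y$-vertex $y_i$ was placed into $A_1$. Define $F\colon \{0,1\}^M \to \mathbb{Z}$ by $F(\xi) = e_G(A)$. The key claim is that $F$ is strictly increasing in each coordinate. Indeed, flipping $\xi_i$ from $0$ to $1$ replaces $y_i$ by $x_i$ in $A$; setting $B := A \setminus \{y_i\}$ (which equals $A' \setminus \{x_i\}$ for the new configuration), we obtain
\[
F(\xi') - F(\xi) \;=\; d_G(x_i, B) - d_G(y_i, B) \;=\; \bigl[d(x_i,A_0) - d(y_i,A_0)\bigr] + \bigl[d(x_i, B\setminus A_0) - d(y_i, B\setminus A_0)\bigr].
\]
The first bracket is at least $k^{0.4}$ by the definition of a good pair, whereas the second has absolute value at most $|B\setminus A_0| = |A_1|-1 = k^{0.2}-1$; hence the total change is strictly positive for all sufficiently large $k$.

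Consequently, for every integer $\ell$ the level set $\{\xi \in \{0,1\}^M : F(\xi) = \ell\}$ contains no two comparable elements and is thus an antichain in the Boolean lattice $\{0,1\}^M$. Sperner's theorem then yields $|\{\xi : F(\xi) = \ell\}| \leq \binom{M}{\lfloor M/2 \rfloor}$, so
\[
\Pb\bigl(e_G(A) = \ell \,\big|\, A_0,\, \text{pairs},\, \text{non-good choices}\bigr) \;\leq\; \binom{M}{\lfloor M/2 \rfloor} \cdot 2^{-M} \;=\; O(M^{-1/2}) \;=\; O(k^{-0.1}).
\]
A union bound over the failure probabilities of our two conditioning events then closes the argument. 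The delicate point I expect to be the main obstacle is the calibration of parameters: $|A_1|$ must be large enough that Sperner gives $M^{-1/2} = O(k^{-0.1})$, yet sufficiently small ($|A_1| \ll k^{0.4}$) so that the degree gap supplied by Claim~\ref{cl:XandY} dominates the perturbation coming from edges inside $A_1$, keeping $F$ monotone. The choice $|A_1| = k^{0.2}$ is precisely what balances these two constraints.
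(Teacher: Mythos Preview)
Your proposal is correct and follows essentially the same route as the paper: condition on Claim~\ref{cl:XandY}, show that $\Theta(k^{0.2})$ of the sampled pairs straddle $X$ and $Y$, freeze the choices in the remaining pairs, verify that the edge count is strictly monotone in each remaining bit because the $k^{0.4}$ degree gap dominates the $O(k^{0.2})$ perturbation from edges inside $A_1$, and finish with Sperner. The only cosmetic differences are that the paper obtains the concentration of the number of good pairs via stochastic domination by a binomial (yielding an exponential tail rather than your Chebyshev $O(k^{-0.2})$, though either suffices), and that the paper states monotonicity for arbitrary comparable vectors rather than single-coordinate flips; your coordinate-wise argument is in fact the cleaner formulation.
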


\begin{proof}
Let $M$ denote the set of all $k^{0.2}$ randomly chosen pairs. Let $M_{XY}$ denote the set of chosen pairs which have one element in $X$ and one in $Y$, and let $m = |M_{XY}|$. We claim that $m = \Omega \left(k^{0.2} \right)$ with probability $1 - e^{- \Omega(k^{0.2})}$. Indeed, consider choosing the pairs which constitute $M$ one by one. Suppose that we have already chosen $j$ pairs for some $0 \leq j < k^{0.2}$, and now choose the $(j+1)$st pair. Let $M_j$ denote the set of all vertices in the union of these $j$ pairs; clearly $|M_j| = O \left(k^{0.2} \right)$. Let $T_{j+1} = 1$ if the $(j+1)$st pair has one element in $X$ and one in $Y$, and $T_{j+1} = 0$ otherwise. Then
$$
\Pb(T_{j+1} = 1) = \frac{|X \setminus M_j| \cdot |Y \setminus M_j|}{\binom{2k}{2}} \geq \frac{c^2}{3}.
$$ 
Note that $m = \sum_{j=1}^{k^{0.2}} T_j$. Let $Z \sim Bin(k^{0.2}, c^2/3)$ and observe that, by the above calculation, $m$ stochastically dominates $Z$. Hence, using standard bounds on the tail of the binomial distribution, we conclude that
$$
\Pb(m < c^2 k^{0.2}/6) \leq \Pb(Z < c^2 k^{0.2}/6) \leq \Pb(Z < \mathbb{E}(Z)/2) < e^{- c^2 k^{0.2}/24}.
$$   

Let $\{x_1, y_1\}, \{x_2, y_2\}, \ldots, \{x_m, y_m\}$ be the elements of $M_{XY}$, where $\{x_1, \ldots, x_m\} \subseteq X$ and $\{y_1, \ldots, y_m\} \subseteq Y$. Fix any choice of one element from every pair in $M \setminus M_{XY}$. With any choice of one element from every pair in $M_{XY}$, we associate a binary vector $\bar{z} = (z_1, \ldots, z_m)$ in a natural way, namely, for every $1 \leq i \leq m$, $z_i = 1$ if we chose $x_i$ to be in $A_1$ and $z_i = 0$ if we chose $y_i$. For each such vector $\bar{z}$, we denote the resulting random $k$-subset of $V(G)$ by $A_{\bar{z}}$. For two such vectors $\bar{z}$ and $\bar{w}$, we say that $\bar{z} > \bar{w}$ if $z_i \geq w_i$ for every $1 \leq i \leq m$ and $\bar{z} \neq \bar{w}$. We claim that $e_G(A_{\bar{z}}) > e_G(A_{\bar{w}})$ whenever $\bar{z} > \bar{w}$. Indeed, let $1 \leq i \leq m$ be an index for which $z_i = 1$ and $w_i = 0$. Then
$$
e_G(A_{\bar{z}}) - e_G(A_{\bar{w}}) \geq d(x_i, A_0) - d(y_i, A_0) - e_G(A_{\bar{w}} \setminus A_0) \geq k^{0.4} - \binom{k^{0.2}}{2} > 0.  
$$ 
It follows that, for any integer $\ell$, the elements of $\{\bar{z} \in \{0,1\}^m : e_G(A_{\bar{z}}) = \ell\}$ form an anti-chain. Hence, by Sperner's Theorem we conclude that
$$
\Pb(e_G(A) = \ell) \leq \frac{\binom{m}{\lfloor m/2 \rfloor}}{2^m} = O \left(1/\sqrt{m} \right) = O \left(k^{- 0.1} \right).
$$         
\end{proof}  
Claim~\ref{cl:MK5} implies that $I(2k, k, \ell) = O(k^{-0.1})$. Thus indeed $\ind(k, \ell) = O(k^{-0.1})$ and the proof of Theorem~\ref{thm:poly} is complete. Note that the proof of Claim~\ref{cl:MK5} resembles (and was inspired by) the Littlewood-Offord problem and its solution by Erd\H{o}s~\cite{Erd}.
 
\section{When $\ell$ is fixed: Proof of Theorem~\ref{thm:fixed}} \label{sec:fixed}

Our aim in this section is to show that, under certain natural conditions, $X_{G,k}$ exhibits a Poisson-like behaviour. The following proposition makes this precise.


\begin{prop} \label{prop:Brun}
Suppose that $1\leq \ell \ll k \ll n$ are integers and that $G$ is a graph with $n$ vertices and $m$ edges. Suppose that $\lim_{k \to \infty} m \cdot \frac{k^2}{n^2} = \mu$ for some constant $\mu = \mu(\ell) > 0$ and that $\Delta := \Delta(G) = o_k(n/k)$. Then 
$$
\Pb(X_{G,k} = \ell) = (1 + o_k(1)) e^{-\mu} \cdot \frac{\mu^\ell}{\ell!}.
$$
\end{prop}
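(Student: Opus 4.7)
My plan is to apply Brun's sieve (the method of factorial moments for Poisson convergence): it suffices to show that for every fixed positive integer $r$ the $r$-th factorial moment $\E[(X)_r]$ converges to $\mu^r$ as $k \to \infty$, since then $\Pb(X=\ell) \to e^{-\mu}\mu^\ell/\ell!$ by the standard Poisson characterization via moments.

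Writing $X = \sum_{e \in E(G)} X_e$, one has
\[
\E[(X)_r] \;=\; \sum_{(e_1, \ldots, e_r)} \Pb(X_{e_1} = \cdots = X_{e_r} = 1) \;=\; \sum_{(e_1, \ldots, e_r)} \frac{(k)_v}{(n)_v},
\]
where the sum is over ordered $r$-tuples of pairwise distinct edges of $G$ and $v := |V(e_1) \cup \cdots \cup V(e_r)|$. I would group these tuples according to the isomorphism type $H$ of the subgraph they span; since $r$ is fixed, there are only finitely many such shapes. Write $v$ for its number of vertices, $r$ for its number of edges, and $c$ for its number of components. Every component contains at least one edge, so each has $v_i \geq 2$ and hence $v \geq 2c$.

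\textbf{Main term.} The unique shape achieving $v = 2r$ is $r$ vertex-disjoint copies of $K_2$. The number of ordered $r$-tuples of distinct pairwise disjoint edges is at most $m^r$ and at least $\prod_{i=0}^{r-1}(m - 2i\Delta - i)$. From $m = (1+o(1))\mu n^2/k^2$ and $\Delta = o(n/k)$ we get $\Delta/m = o(k/n) = o(1)$, so this count equals $(1-o(1))m^r$. Combined with $(k)_{2r}/(n)_{2r} = (1+o(1))(k/n)^{2r}$, the main contribution is $(1+o(1))(mk^2/n^2)^r = \mu^r + o(1)$.

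\textbf{Error terms.} For every other shape one has $v < 2r$, and this forces $v - 2c \geq 1$: if all $v_i$ equaled $2$ then every component would be a single $K_2$, giving $v = 2c = 2r$. A standard spanning-tree argument---anchor one edge of $G$ in each component ($m$ choices each) and extend each spanning tree vertex by vertex ($\leq \Delta$ choices per step)---bounds the number of labelled copies by $N(H) = O(m^c \, \Delta^{v-2c})$. Therefore
\[
N(H) \cdot \frac{(k)_v}{(n)_v} \;=\; O\!\left( \bigl(mk^2/n^2\bigr)^{c} \cdot \bigl(\Delta k/n\bigr)^{v-2c} \right) \;=\; O(\mu^c) \cdot o(1) \;=\; o(1),
\]
since $\Delta k/n = o(1)$ and $v - 2c \geq 1$. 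Summing over the finitely many error shapes and adding the main term yields $\E[(X)_r] = \mu^r + o(1)$, which by Brun's sieve gives the claimed Poisson limit.

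The main technical hurdle is this last step: if one instead uses the coarser bound $N(H) \leq n^c \Delta^{v-c}$ (anchoring a \emph{vertex} per component), the algebra does not close, because the exponents on $\Delta$ and $n/k$ no longer match. The correct move is to anchor an \emph{edge} per component, trading a factor of $n\Delta$ for $m$; combined with $m = \Theta(n^2/k^2)$, the probability factor $(k/n)^v$ then cancels everything exactly, leaving the required $o(1)$.
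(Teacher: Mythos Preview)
Your proof is correct and follows essentially the same route as the paper: both apply Brun's sieve by computing the factorial moments $\E[(X)_r]$ (the paper phrases it via $\E\bigl[\binom{X}{r}\bigr]$), isolate the matching $rK_2$ as the main term yielding $(1+o(1))\mu^r$, and bound the remaining shapes by anchoring one edge per component and extending each spanning tree at a cost of $\Delta$ per new vertex, giving $N(H)=O(m^{c}\Delta^{v-2c})$ and hence an $o(1)$ contribution since $\Delta k/n=o(1)$ and $v-2c\geq 1$. Your final remark about anchoring an edge rather than a vertex is exactly the point the paper exploits.
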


\begin{proof}
Throughout this proof the $o(\cdot)$-notation will solely refer to $o_k(\cdot)$; hence we omit the subscript $k$ for readability. 
By Brun's Sieve (see, e.g., Theorem 8.3.1 in~\cite{AS}), in order to prove that $\Pb(X_{G,k} = \ell) = (1 + o(1)) e^{-\mu} \cdot \frac{\mu^\ell}{\ell!}$, it suffices to show that $\mathbb{E} \left[\binom{X}{r} \right] = (1 + o(1)) \cdot \frac{\mu^r}{r!}$ holds for every fixed integer $1 \leq r \leq r(\ell)$.     

Let $e_1, \ldots, e_m$ be an arbitrary ordering of the edges of $G$. For every $1 \leq j \leq m$, let $X_j = 1$ if both endpoints of $e_j$ are in the random $k$-set $A$, and $X_j = 0$ otherwise. Observe that $X_{G,k} = \sum_{j=1}^m X_j$. In particular, 
$$
\mathbb{E}(X_{G,k}) = \sum_{j=1}^m \mathbb{E}(X_j) = m \cdot \frac{\binom{n-2}{k-2}}{\binom{n}{k}} = m \cdot \frac{k (k-1)}{n (n-1)}.
$$
It thus follows by the definition of $\mu$ that $\mathbb{E} \left[\binom{X}{1} \right] = \mathbb{E}(X) = (1+o(1)) \mu$.

Now, fix some positive integer $r$. Let $1 \leq j_1 < \ldots < j_r \leq m$ be arbitrary indices. Assume first that $e_{j_1}, \ldots, e_{j_r}$ form a matching in $G$. Then
$$
\Pb(X_{j_1} = 1 \wedge \ldots \wedge X_{j_r} = 1) = \frac{\binom{n - 2r}{k - 2r}}{\binom{n}{k}} = (1 + o(1)) \left(\frac{k}{n} \right)^{2r}.
$$
Moreover, the number of ways to choose indices $1 \leq j_1 < \ldots < j_r \leq m$ for which $e_{j_1}, \ldots, e_{j_r}$ form a matching in $G$ is $(1 + o(1)) \binom{m}{r}$ (trivially, it is at most $\binom{m}{r}$). Indeed, by our assumption on $\Delta$, this number is at least 
$$
\frac{1}{r!} \prod_{t=0}^{r-1} (m - 2 t \Delta) \geq \frac{1}{r!} \prod_{t=0}^{r-1} (m - 2 t m k/n) = \frac{m^r}{r!} \cdot (1 + o(1)) e^{- 2 k \binom{r}{2}/n} = (1 + o(1)) \binom{m}{r},
$$   
where the last equality holds since, by assumption, $n$ is sufficiently large with respect to $k$ and $r$.

Next, assume that $e_{j_1}, \ldots, e_{j_r}$ do not form a matching in $G$. Let $H$ be the graph whose edges are $e_{j_1}, \ldots, e_{j_r}$ and whose vertices are the endpoints of $e_{j_1}, \ldots, e_{j_r}$. Let $C_1, \ldots, C_t$ denote the connected components of $H$, and let $c_i = |C_i|$ for every $1 \leq i \leq t$. Then
$$
\Pb(X_{j_1} = 1 \wedge \ldots \wedge X_{j_r} = 1) = \frac{\binom{n - \sum_{i=1}^t c_i}{k - \sum_{i=1}^t c_i}}{\binom{n}{k}} = (1 + o(1)) \left(\frac{k}{n} \right)^{\sum_{i=1}^t c_i}.
$$  

Assume without loss of generality that $c_1 \geq \ldots \geq c_t$. Since $H$ is not a matching, we must have $c_1 \geq 3$. For all integers $a \geq 2$ and $b \geq a-1$, the number of ways to choose $b$ edges of $G$ that form a connected component on $a$ vertices, is at most   
$$
m (a-1)! \Delta^{a-2} \binom{a}{2}^{b-a+1} = 
\begin{cases}
O(m) & \textrm{if } a = 2 \\
m \cdot o\left[(n/k)^{a-2}\right] & \textrm{if } a > 2
\end{cases}
$$
Indeed, we begin by choosing an arbitrary edge of $G$ (in $m$ ways), then, one by one, we choose $a-2$ additional edges to form a tree on $a$ vertices (in at most $2 \Delta \cdot 3 \Delta \cdot \ldots \cdot (a-1) \Delta = (a-1)! \Delta^{a-2}$ ways), and then we choose the remaining $b-a+1$ edges such that their endpoints are among the $a$ vertices of the tree. 

Hence, the total number of ways to choose $r$ edges of $G$ that form a graph $H$ consisting of connected components of orders $c_1 \geq \ldots \geq c_t \geq 2$, where $c_1 \geq 3$, is at most
$$
\prod_{i=1}^t m \cdot o \left[\left(\frac{n}{k} \right)^{c_i-2} \right] = m^t \cdot o\left[\left(\frac{n}{k} \right)^{\sum_{i=1}^t c_i - 2t}\right].
$$

For every $1 \leq t \leq r$, let $L_t$ denote the set of all integer vectors $(c_1, \ldots, c_t)$ such that $c_1 \geq \ldots \geq c_t \geq 2$, $c_1 \geq 3$, and $\sum_{i=1}^t c_i < 2r$. Combining everything together we conclude that 
\begin{eqnarray*}
\mathbb{E} \left[\binom{X}{r} \right] &=& (1 + o(1)) \binom{m}{r} \cdot (1 + o(1)) \left(\frac{k}{n} \right)^{2r} \\ 
&+& \sum_{t=1}^r \sum_{(c_1, \ldots, c_t) \in L_t} m^t \cdot o\left[\left(\frac{n}{k} \right)^{\sum_{i=1}^t c_i - 2t}\right] \cdot (1 + o(1)) \left(\frac{k}{n} \right)^{\sum_{i=1}^t c_i} \\
&=& (1 + o(1)) \left(\frac{1}{r!} \left(\frac{m k^2}{n^2} \right)^r + \sum_{t=1}^r \sum_{(c_1, \ldots, c_t) \in L_t} o \left[\left(\frac{m k^2}{n^2} \right)^t \right] \right) \\
&=& (1 + o(1)) \left[\frac{\mu^r}{r!} + o(\mu^r) \right] = (1 + o(1)) \cdot \frac{\mu^r}{r!}, 
\end{eqnarray*}
where the penultimate equality holds since $r$, $t$ and $\mu > 0$ are fixed. 
\end{proof}

The first consequence of Proposition~\ref{prop:Brun} is that it provides a whole plethora of constructions demonstrating that $\ind(k,1) \geq 1/e + o_k(1)$. Indeed, let $n$ be a sufficiently large integer and let $G$ be any graph with $n$ vertices, $(1 + o_k(1)) \left(n^2/k^2 \right)$ edges, and maximum degree $o_k(n/k)$. Then, $G$ satisfies  the assumptions of Proposition~\ref{prop:Brun} with $\mu = 1$, implying that $\ind(k,1) \geq 1/e + o_k(1)$. This rich family of constructions includes, in particular, the random graphs $G \left(n, \binom{k}{2}^{-1} \right)$ (with high probability) which were mentioned in the introduction, and the pairwise disjoint union of $\binom{k}{2}$ cliques, on $n \binom{k}{2}^{-1}$ vertices each.

Moreover, by combining Proposition~\ref{prop:Brun} with some of our previous arguments as well as some new ones, we can prove Theorem~\ref{thm:fixed}.

\subsection{The $1/2$-bound for $\ell=1$}

Let $k$ be a sufficiently large integer, let $a = \ind(k,1)$, and let $G$ be a graph on $n$ vertices, attaining the maximum density of induced one-edged graphs, that is, $\Pb(X_{G,k} = 1) = a + o_n(1)$.

Our aim is to show that either $a \leq 1/2 + o_k(1)$, or that the conditions of Proposition~\ref{prop:Brun} are satisfied. In the latter case we will be done, as applying Proposition~\ref{prop:Brun} would imply $a \leq 1/e + o_k(1)$.

\begin{claim} \label{cl:DeltaOnk}
$\Delta(G) \leq 10n/k$.
\end{claim}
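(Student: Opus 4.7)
The plan is to argue by contradiction, so suppose that some vertex $v \in V(G)$ satisfies $d := d_G(v) > 10n/k$. Since $0 < 1 < \binom{k}{2}$, Lemma~\ref{lem:zykov} applies with $\ell = 1$ and yields
\[
\Pb(X_{G,k} = 1 \mid v \in A) = a + o_n(1).
\]

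Conditional on the event $\{v \in A\}$, the set $A \setminus \{v\}$ is a uniformly random $(k-1)$-subset of $V(G) \setminus \{v\}$, so $Y := |A \cap N_G(v)|$ is a hypergeometric random variable with mean
\[
\mu_Y = \frac{d(k-1)}{n-1} \geq 10(1 - o(1)).
\]
The key structural observation is that every neighbor of $v$ lying in $A$ contributes a distinct edge to $G[A]$; in particular, the event $\{X_{G,k} = 1, \, v \in A\}$ forces $Y \leq 1$, and therefore $\Pb(X_{G,k} = 1 \mid v \in A) \leq \Pb(Y \leq 1 \mid v \in A)$.

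Next, I will use a standard tail estimate for the hypergeometric distribution to show that $\Pb(Y \leq 1 \mid v \in A)$ is tiny. When $d = o(n)$, a Poisson approximation gives $\Pb(Y \leq 1 \mid v \in A) = (1 + o(1))(1 + \mu_Y)e^{-\mu_Y} \leq 11 e^{-10} + o(1)$; when $d = \Theta(n)$, the mean $\mu_Y = \Theta(k)$ is enormous and a Chernoff-type inequality for the hypergeometric distribution gives an even sharper exponential bound. Either way, for all sufficiently large $n$ and $k$,
\[
\Pb(Y \leq 1 \mid v \in A) < 1/4,
\]
say. On the other hand, the introduction records the lower bound $a = \ind(k,1) \geq 1/e + o_k(1)$ coming from the $G(n, \binom{k}{2}^{-1})$ construction, and $1/e$ strictly exceeds $1/4$. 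Combined with the two displays above, this yields the desired contradiction, so $\Delta(G) \leq 10n/k$.

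I do not foresee a serious obstacle in executing this plan; the only mildly delicate point is making the hypergeometric tail bound work uniformly across the two regimes $d = o(n)$ and $d = \Theta(n)$, but the constant $10$ leaves so much room (any constant exceeding roughly $2.7$ would suffice in place of $10$) that the required estimate is entirely standard.
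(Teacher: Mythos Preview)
Your proposal is correct and follows essentially the same approach as the paper: invoke Lemma~\ref{lem:zykov} to get $\Pb(X_{G,k}=1\mid v\in A)=a+o_n(1)\geq 1/e+o_k(1)$, observe that $X_{G,k}=1$ with $v\in A$ forces $|A\cap N_G(v)|\leq 1$, and then use the Poisson approximation for the hypergeometric variable $|A\cap N_G(v)|$ (with mean at least about $10$) to derive a contradiction. The paper phrases the final step as a lower bound on $\Pb(X_{G,k}>1\mid v\in A)$ rather than an upper bound on $\Pb(X_{G,k}=1\mid v\in A)$, but this is the same computation; your explicit split into the regimes $d=o(n)$ and $d=\Theta(n)$ is a harmless elaboration that the paper leaves implicit.
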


\begin{proof}
Let $v$ be a vertex of maximum degree in $G$ and suppose for a contradiction that $d(v) >10n/k$. It follows by Lemma~\ref{lem:zykov} and the discussion succeeding the proof of Proposition~\ref{prop:Brun} that 
$$
\Pb(X_{G,k} = 1 \mid v \in A) = a + o_n(1) \geq 1/e + o_k(1).
$$ 
However, a straightforward calculation which uses the Poisson-approximation shows that 
\begin{align*}
\Pb(X_{G,k} > 1 \mid v \in A) &\geq \Pb(|A \cap N_G(v)| > 1 \mid v \in A) > 1 - e^{-10} - 10 e^{-10} + o_k(1)\\
& > 1 -  \Pb(X_{G,k} = 1 \mid v \in A),
\end{align*}
which is an obvious contradiction.
\end{proof}
\begin{claim}\label{cl:DeltaOmegank}
If $\Delta(G) = o_k(n/k)$, then $a \leq 1/e + o_k(1)$. 
\end{claim}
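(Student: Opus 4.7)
The plan is to apply Proposition~\ref{prop:Brun} with $\ell=1$ and then invoke the elementary inequality $xe^{-x} \leq 1/e$ for $x \geq 0$. With $\mu := \E(X_{G,k}) = mk(k-1)/(n(n-1))$, the proposition's conclusion becomes $\Pb(X_{G,k}=1) = (1+o_k(1))\mu e^{-\mu} \leq 1/e + o_k(1)$; combined with $\Pb(X_{G,k}=1) = a + o_n(1)$, this yields the desired $a \leq 1/e + o_k(1)$. The only missing hypothesis for Proposition~\ref{prop:Brun} is that $\mu$ converges to a positive constant, so the bulk of the work is to dispose of the two degenerate regimes.

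If $\mu \to 0$ along some sequence of witnesses, Markov's inequality gives $\Pb(X_{G,k} \geq 1) \leq \E(X_{G,k}) = \mu \to 0$, yielding $\Pb(X_{G,k}=1) = o_k(1)$. If instead $\mu \to \infty$, I would replace Proposition~\ref{prop:Brun} by a direct second-moment argument. Equation~\eqref{eq:VarX} gives
$$\Var(X_{G,k}) = (1+o(1))\bigl(mk^2/n^2 + Sk^3/n^3\bigr),$$
where $S = \sum_v d(v)^2$. Using $S \leq 2m\Delta(G)$ together with the hypothesis $\Delta(G) = o_k(n/k)$, one obtains $Sk^3/n^3 = o_k(\mu)$, so $\Var(X_{G,k}) = (1+o_k(1))\mu$. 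Chebyshev's inequality then yields $\Pb(X_{G,k}=1) \leq \Pb(|X_{G,k}-\mu| \geq \mu - 1) \leq \Var(X_{G,k})/(\mu-1)^2 = O(1/\mu) = o_k(1)$.

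Once these two extremes are ruled out, $\mu$ is bounded and bounded away from $0$; by Bolzano--Weierstrass we may pass to a subsequence with $\mu \to \mu^{\ast} \in (0,\infty)$, and Proposition~\ref{prop:Brun} applies to give $\Pb(X_{G,k}=1) = (1+o_k(1))\mu^{\ast} e^{-\mu^{\ast}} \leq 1/e + o_k(1)$. The main obstacle is the $\mu \to \infty$ regime, which lies outside Proposition~\ref{prop:Brun}'s hypothesis; fortunately this is precisely where $X_{G,k}$ concentrates around its mean, and the maximum-degree bound $\Delta(G) = o_k(n/k)$ is exactly what is needed to keep the ``cherry'' contribution $Sk^3/n^3$ from inflating $\Var(X_{G,k})$ beyond $\mu$.
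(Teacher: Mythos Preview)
Your argument is correct. The difference from the paper lies entirely in the treatment of the regime where $\mu = m k^2/n^2$ is large. The paper does not compute moments there; instead it passes to an arbitrary subgraph $G' \subseteq G$ with exactly $20\,n^2/k^2$ edges, notes that $\Delta(G') \leq \Delta(G) = o_k(n/k)$, applies Proposition~\ref{prop:Brun} to $G'$ with $\mu = 20$, and then uses the monotonicity $\Pb(X_{G,k}=1) \leq \Pb(X_{G',k}\leq 1) = 21e^{-20} + o_k(1)$. This keeps the proof of Claim~\ref{cl:DeltaOmegank} self-contained within Section~\ref{sec:fixed} and avoids any variance computation. Your route via~\eqref{eq:VarX} and Chebyshev is equally valid---the hypothesis $\Delta(G)=o_k(n/k)$ gives $m=o(n^2/k)$, which is exactly what is needed for~\eqref{eq:VarX} to hold, and then $S\leq 2m\Delta(G)$ makes the cherry term negligible---but it reaches back to the Section~\ref{sec::main} calculation, so you should remark that~\eqref{eq:VarX} was derived there under the standing assumption $m=o(n^2/k)$ rather than the contradiction hypothesis of Theorem~\ref{thm:main}. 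The paper also skips your $\mu\to 0$ case entirely, since Lemma~\ref{lem:avgdeg} already forces $m=\Omega(n^2/k^2)$; your Markov step is harmless but unnecessary.
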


\begin{proof}
Recall that $e(G) = \Omega \left(n^2/k^2 \right)$ holds by Lemma~\ref{lem:avgdeg}. Assume first that $e(G) \leq 20 n^2/k^2$. By compactness we may assume that $m = (1 + o_k(1)) \mu \frac{n^2}{k^2}$ for some constant $0 < \mu \leq 20$, whereby the conditions of Proposition~\ref{prop:Brun} are satisfied. Applying it yields 
$$
a + o_n(1) = \Pb(X_{G,k} = 1) = (1 + o_k(1)) \mu e^{-\mu} \leq 1/e + o_k(1).
$$   

Assume then that $e(G) > 20 n^2/k^2$. Let $G'$ be an arbitrary subgraph of $G$ with $20 n^2/k^2$ edges. Since $\Delta'(G) \leq \Delta(G) = o_k(n/k)$, the graph $G'$ satisfies the conditions of Proposition~\ref{prop:Brun} with $\mu = 20$. Therefore
$$
a + o_n(1) = \Pb(X_{G,k} = 1) \leq \Pb(X_{G',k} \leq 1) = 21 e^{-20} + o_k(1) \leq 1/e + o_k(1).
$$   
\end{proof}

By Claims~\ref{cl:DeltaOnk} and~\ref{cl:DeltaOmegank} and by compactness, we may assume that $\Delta(G) = c n/k$ for some constant $c > 0$.

Let $v$ be a vertex of maximum degree in $G$, let $Q = N_G(v)$, and let $R = V(G) \setminus (Q \cup \{v\})$. By the Poisson-approximation of the hypergeometric distribution, with probabilities of approximately $e^{-c}$ and $c e^{-c}$, respectively, $A$ will contain exactly $0$ or $1$ vertices from $Q$. 
Therefore, invoking Lemma~\ref{lem:zykov}, we obtain
\begin{align} \label{eq:main2}
a + o_n(1) &= \Pb(X_{G,k} = 1 | v \in A) = \Pb(X_{G,k} = 1 | v \in A \textrm{ and } |A \cap Q| = 0) \cdot \Pb(|A \cap Q| = 0 | v \in A)\nonumber \\
&+ \Pb(X_{G,k} = 1 | v \in A \textrm{ and } |A \cap Q| = 1) \cdot \Pb(|A \cap Q| = 1 | v \in A)\nonumber \\
&\leq (1+o_k(1)) \cdot (\Pb(X_{G[R],k-1} = 1) \cdot e^{-c} + \Pb(X_{G[R],k-2} = 0) \cdot c e^{-c}).
\end{align}

\begin{claim}\label{cl:21}
For any graph $H$ and any positive integers $k$ and $t$ we have 
$$
\Pb(X_{H,k-1} = t) \geq \frac{k-2t}{k} \cdot \Pb(X_{H,k} = t).
$$
\end{claim}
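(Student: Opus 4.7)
\medskip

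\noindent\textbf{Proof plan.} The plan is to couple the two random subsets in the natural two-stage way. I would first sample $A = A_{H,k}$ uniformly among all $k$-subsets of $V(H)$, then choose a vertex $u \in A$ uniformly at random (independently), and set $A' := A \setminus \{u\}$. A standard symmetry argument shows that $A'$ is distributed uniformly among all $(k-1)$-subsets of $V(H)$, so $\Pb(X_{H,k-1} = t) = \Pb(e(H[A']) = t)$ under this coupling.

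Next, I would observe that when $u$ is an isolated vertex of $H[A]$, the edge count is preserved under the removal, i.e.\ $e(H[A']) = e(H[A])$. In particular, if $X_{H,k} = t$ then $H[A]$ has exactly $t$ edges, which can be incident to at most $2t$ vertices of $A$, so at least $k - 2t$ vertices of $A$ are isolated in $H[A]$. Consequently, conditional on $X_{H,k} = t$, the probability (over the independent choice of $u$) that $u$ is isolated in $H[A]$ is at least $(k-2t)/k$, and on this event we have $X_{H,k-1} = e(H[A']) = t$.

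Combining these observations yields
\[
\Pb(X_{H,k-1} = t) \;\geq\; \Pb(X_{H,k-1} = t,\; X_{H,k} = t) \;\geq\; \frac{k-2t}{k} \cdot \Pb(X_{H,k} = t),
\]
which is the desired inequality. Note that when $2t \geq k$ the right-hand side is non-positive and the claim is trivial, so the only content is in the regime $t < k/2$, precisely where the counting of isolated vertices is nontrivial. There is no real obstacle here: the entire argument is a one-step coupling, and the only slightly subtle point is verifying that the marginal distribution of $A'$ under the two-stage sampling is uniform on $(k-1)$-subsets, which follows from the fact that every $(k-1)$-subset $A'$ can be obtained from exactly $n-k+1$ choices of $(A,u)$ with $A \supseteq A'$ and $u = A \setminus A'$.
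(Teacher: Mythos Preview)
Your proof is correct and is essentially identical to the paper's own argument: both use the two-step coupling (sample a uniform $k$-set, delete a uniformly random vertex) and bound the conditional probability of not destroying any edge by $(k-2t)/k$. Your write-up is slightly more detailed in justifying the uniformity of $A'$ and noting the trivial case $2t\geq k$, but the idea is the same.
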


\begin{proof}
Sample uniformly at random a vertex set $A_{k-1} \subseteq V(H)$ of size $k-1$, using the following two steps. First sample $k$ vertices uniformly at random and without replacement, obtaining a set $A_k \subseteq V(H)$. Then, choose a vertex $u \in A_k$ uniformly at random and put $A_{k-1} = A_k \setminus \{u\}$. Observe that, if in the first step we sampled a $t$-edge graph, then the probability to `destroy' it in the second step is at most $2t/k$. Therefore
$$
\Pb(X_{H,k-1} = t) \geq \Pb(e(G[A_{k-1}]) = t | e(G[A_k]) = t) \cdot \Pb(e(G[A_k]) = t) \geq \frac{k-2t}{k} \cdot \Pb(X_{H,k} = t).
$$
\end{proof}

\begin{claim}\label{cl:31}
For any graph $H$ and any positive integers $k$ and $t$ we have 
$$
\Pb(X_{H,k-1} = t) \leq \Pb(X_{H,k} = t) + \frac{2t+2}{k}.
$$
\end{claim}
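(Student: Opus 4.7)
The approach I propose mirrors the two-step sampling used in Claim~\ref{cl:21}, just run in the opposite direction. First sample a uniformly random $k$-set $A_k \subseteq V(H)$, then choose a uniformly random vertex $u \in A_k$ and set $A_{k-1} := A_k \setminus \{u\}$; a standard check shows that $A_{k-1}$ is then uniformly distributed on $(k-1)$-subsets of $V(H)$. Writing $s := e(H[A_k])$ and $d := d_H(u, A_k \setminus \{u\})$, the key identity is $e(H[A_{k-1}]) = s - d$, so, conditional on $A_k$, the event $\{e(H[A_{k-1}]) = t\}$ has probability $|\{u \in A_k : d_H(u, A_k \setminus \{u\}) = s - t\}|/k$.

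I then plan to split the law-of-total-probability sum according to the value of $s$. For $s < t$ the conditional probability vanishes. For $s = t$, bounding it by $1$ contributes at most $\Pb(X_{H,k} = t)$ to the total. The only case that needs real work is $s = t+r$ with $r \geq 1$: here I will apply the handshake identity $\sum_{u \in A_k} d_H(u, A_k \setminus \{u\}) = 2s$ together with Markov's inequality to conclude that $|\{u \in A_k : d_H(u, A_k \setminus \{u\}) = r\}| \leq 2s/r = 2(t+r)/r \leq 2t + 2$, so the conditional probability is at most $(2t+2)/k$.

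Summing these contributions will yield
\[
\Pb(X_{H,k-1} = t) \leq \Pb(X_{H,k} = t) + \frac{2t+2}{k} \sum_{r \geq 1} \Pb(X_{H,k} = t+r) \leq \Pb(X_{H,k} = t) + \frac{2t+2}{k},
\]
which is the claimed bound. I do not anticipate any serious obstacle; the proof is essentially a one-line dual of Claim~\ref{cl:21}, with the only new ingredient being the degree-counting estimate $|\{u : d = r\}| \leq 2s/r$, which is tight precisely when $r = 1$ and is what forces the constant $2t + 2$ (as opposed to $2t$) in the statement.
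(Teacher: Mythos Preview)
Your proposal is correct and follows essentially the same argument as the paper: the identical two-step sampling, the same case split on $s = e(H[A_k])$, and the same degree-counting bound (the paper states in a parenthetical that ``a graph with $t+s$ edges contains at most $2t+2$ vertices of degree $s$'', which is exactly your handshake/Markov estimate $2(t+r)/r \leq 2t+2$). Your write-up makes the justification of that combinatorial fact slightly more explicit than the paper does, but the proofs are otherwise identical.
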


\begin{proof}
Consider the same two-step sampling as in the proof of Claim~\ref{cl:21}. Note that, if in the first step, the set $A_k$ contains more than $t$ edges, then the probability to obtain a $t$-edge graph in the second step is at most $(2t+2)/k$ (this is because, for every $s \geq 1$, a graph with $t+s$ edges contains at most $2t+2$ vertices of degree $s$). Thus, 
\begin{align*}
\Pb(X_{H,k-1}=t) &= \sum_{i=0}^{\binom{k}{2}} \Pb(X_{H,k} = i) \cdot \Pb(X_{H,k-1} = t | e(G[A_k]) = i)\\
&\leq \Pb(X_{H,k} = t) + \sum_{i=t+1}^{\binom{k}{2}} \Pb(X_{H,k} = i) \cdot \Pb(X_{H,k-1} = t | e(G[A_k]) = i)\\
&\leq \Pb(X_{H,k} = t) + \frac{2t+2}{k} \sum_{i=t+1}^{\binom{k}{2}} \Pb(X_{H,k}=i)\\
&\leq \Pb(X_{H,k} = t) + \frac{2t+2}{k}.
\end{align*}
\end{proof}

Let $b$ denote $\Pb(X_{G[R],k-2}=1)$. Then, applying Claim~\ref{cl:21} to~\eqref{eq:main2}, we obtain
\begin{align*} 
a &\leq (1 + o_k(1)) \cdot (\Pb(X_{G[R],k-1} = 1) \cdot e^{-c} + \Pb(X_{G[R],k-2} = 0) \cdot c e^{-c})\\
&\leq (1 + o_k(1)) \cdot (\Pb(X_{G[R],k-2} = 1) \cdot e^{-c} + \Pb(X_{G[R],k-2} = 0) \cdot c e^{-c})\\
&\leq (1 + o_k(1)) \cdot (b e^{-c} + (1-b) c e^{-c}).
\end{align*}
Observe that, if $c \geq 1$, then
$$
a \leq (1+o_k(1)) \cdot (b e^{-c} + (1-b) c e^{-c}) \leq (1+o_k(1)) \cdot (b c e^{-c} + (1-b) c e^{-c}) \leq 1/e + o_k(1).   
$$ 

Assume then that $c \leq 1$. Observe that, in this case, $b e^{-c} + (1-b) c e^{-c}$ is an increasing function in $b$. Moreover, 
$$
b \leq (1+o_k(1)) \Pb(X_{G[R],k} = 1) \leq (1+o_k(1)) a,
$$
where the first inequality holds by Claim~\ref{cl:31} (applied twice), and the second inequality holds since $a = \ind(k,1)$ and $R$ is large. 
Hence, 
$$
(1+o_k(1)) a \leq b e^{-c} + (1-b) c e^{-c} \leq a e^{-c} + (1-a) c e^{-c}.
$$
This implies that
$$
a \leq (1 + o_k(1)) \frac{c e^{-c}}{1-e^{-c} + c e^{-c}} = (1+o_k(1)) \frac{c}{e^c-1+c} \leq 1/2 + o_k(1).
$$

\subsection{The $3/4$-bound for a fixed $\ell$}

We use a similar, but slightly more technical, argument to the one used in the case $\ell=1$. Let $\ell$ be a positive integer and let $k \gg \ell$. Let $a = \ind(k,\ell)$, and note that $a$ is bounded away from $0$ by a constant $\varepsilon(\ell)$. This can be seen, for instance, by considering an appropriate random graph. Our aim is to prove that $a \leq 3/4 + o_k(1)$.

Let $G$ be a graph on $n$ vertices, attaining the maximum density of induced $\ell$-edged graphs, that is, $\Pb(X_{G,k} = \ell) = a + o_n(1)$. 
Using analogous arguments to the ones used to prove Claims~\ref{cl:DeltaOnk} and~\ref{cl:DeltaOmegank}, we may assume that $\Delta(G) = cn/k$ for some constant $c > 0$. Let $v$ be a vertex of maximum degree in $G$, let $Q = N_G(v)$, and let $R = V(G) \setminus (Q \cup \{v\})$. By the Poisson-approximation of the hypergeometric distribution, for each fixed $t$, with probability of approximately $e^{-c} c^t/t!$, the set $A$ will contain exactly $t$ vertices from $Q$. It thus follows by Lemma~\ref{lem:zykov} that
\begin{align} \label{eq:main1}
& a + o_n(1) = \Pb(X_{G,k} = \ell | v \in A) \nonumber \\ 
&= \sum_{t=0}^{\ell} \Pb(X_{G,k} = \ell | v \in A \textrm{ and } |A \cap Q| = t) \cdot \Pb(|A \cap Q| = t | v \in A) \nonumber \\
&\leq (1+o_k(1)) \cdot \left(\Pb(X_{G[R],k-1} = \ell) \cdot e^{-c} + \sum_{t=1}^{\ell} \Pb(X_{G[R],k-1-t} \leq \ell-t) \cdot \frac{c^t}{t!} e^{-c} \right).
\end{align}

Let $b$ denote $\Pb(X_{G[R],k-\ell-1} = \ell)$. Then, applying Claim~\ref{cl:21} to~\eqref{eq:main1} a constant (depending only on $\ell$) number of times, we obtain
\begin{align*} 
a &\leq (1+o_k(1)) \cdot \left(\Pb(X_{G[R],k-1} = \ell) \cdot e^{-c} + \sum_{t=1}^{\ell} \Pb(X_{G[R],k-1-t} \leq \ell-t) \cdot \frac{c^t}{t!} e^{-c} \right)\\
&\leq (1+o_k(1)) \cdot \left(\Pb(X_{G[R],k-\ell-1} = \ell) \cdot e^{-c} + \sum_{t=1}^{\ell} \Pb(X_{G[R],k-\ell-1} \leq \ell-t) \cdot \frac{c^t}{t!} e^{-c} \right).\\
&\leq (1+o_k(1)) \cdot \left(b e^{-c} + (1-b) \sum_{t=1}^\ell \frac{c^t}{t!} e^{-c} \right).
\end{align*}
Assume first that $c < \log 2$. In this case we have 
$$
\sum_{t=1}^\ell \frac{c^t}{t!} < e^c - 1 < 2 - 1 = 1,
$$
which implies that $b e^{-c} + (1-b) \sum_{t=1}^{\ell} e^{-c}{c^t}/{t!}$ is an increasing function in $b$. Now observe that
$$
b \leq (1+o_k(1)) \Pb(X_{G[R],k} = \ell) \leq (1+o_k(1)) a,
$$
where the first inequality holds by Claim~\ref{cl:31} (applied $\ell + 1$ times), and the second inequality holds since $a = \ind(k,\ell)$ and $R$ is large. 
Hence, 
$$
(1+o_k(1)) a \leq b e^{-c} + (1-b) \sum_{t=1}^{\ell} \frac{c^t}{t!} e^{-c} \leq a e^{-c} + (1-a) \sum_{t=1}^{\ell} \frac{c^t}{t!} e^{-c}.
$$
This implies that
$$
(1+o_k(1)) a \leq \frac{e^{-c} \sum_{t=1}^{\ell} {c^t}/{t!}}{1 - e^{-c} + e^{-c} \sum_{t=1}^\ell {c^t}/{t!}} = \frac{\sum_{t=1}^{\ell} {c^t}/{t!}}{e^c - 1 + \sum_{t=1}^{\ell} {c^t}/{t!}} < \frac{\sum_{t=1}^{\ell} {c^t}/{t!}}{2 \sum_{t=1}^{\ell} {c^t}/{t!}} = \frac{1}{2} < \frac{3}{4}.
$$
It thus remains to consider the case $c \geq \log 2$. Recall that $d_G(v) = \Delta(G) = cn/k$ and let $w$ be a vertex of minimum degree in $G$. 
\begin{claim}\label{cl:mindegreesmall}
$d_G(w) = \delta(G) = o_k(n/k)$. 
\end{claim}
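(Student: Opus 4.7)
The plan is to deduce the claim by combining a variance estimate for $X_{G,k}$ with Chebyshev's inequality. Heuristically: since $\ell$ is fixed and $a = \ind(k,\ell)$ is bounded below by a positive constant (so $X_{G,k}$ takes the value $\ell$ with constant probability), the mean $\mu = \E(X_{G,k})$ cannot be far from $\ell$; but $\mu$ is of order $mk^2/n^2$, so $m$, and hence the average (and \emph{a fortiori} the minimum) degree, will be forced to be small.

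First, we may assume $a > 1/2$, since otherwise $a \le 1/2 < 3/4$ and there is nothing to prove. Since by symmetry $e(G) \le \binom{n}{2}/2$, Lemma~\ref{lem:maxdeg} yields $\Delta(G) = O(n/k)$, and consequently $m = O(n^2/k)$ and $S = \sum_v d_G(v)^2 \le n \Delta(G)^2 = O(n^3/k^2)$. The variance expansion performed in Section~\ref{sec::main} (before the simplification that assumed $m = o(n^2/k)$) reads
\[
\Var(X_{G,k}) = (1 + o(1))\left(m\cdot\frac{k^2}{n^2} + S\cdot\frac{k^3}{n^3} - 4m^2\cdot\frac{k^3}{n^4}\right),
\]
and dropping the (non-positive) $m^2$-term already gives $\Var(X_{G,k}) = O(k)$.

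Since $\ell$ is fixed, the random-graph constructions described right after the proof of Proposition~\ref{prop:Brun} show that $a = \ind(k,\ell) = \Omega(1)$. Chebyshev's inequality then yields, whenever $\mu \neq \ell$,
\[
a + o_n(1) = \Pb(X_{G,k} = \ell) \le \Pb\bigl(|X_{G,k} - \mu| \ge |\mu - \ell|\bigr) \le \frac{\Var(X_{G,k})}{(\mu - \ell)^2},
\]
so $|\mu - \ell|^2 = O\bigl(\Var(X_{G,k})/a\bigr) = O(k)$, and hence $\mu = \ell + O(\sqrt{k}) = O(\sqrt{k})$. Using $\mu = (1 + o(1))\,m k^2/n^2$ we obtain $m = O(n^2/k^{3/2})$, the average degree is $2m/n = O(n/k^{3/2}) = o_k(n/k)$, and therefore $\delta(G) \le 2m/n = o_k(n/k)$, which is exactly the claim. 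The main obstacle is the initial variance estimate: in contrast to Section~\ref{sec::main} we are in the regime $m = \Theta(n^2/k)$ rather than $m = o(n^2/k)$, so one must verify that the $m^2k^4/n^4$ contribution to $\E(X_{G,k}^2)$ cancels with $\mu^2$ up to the manifestly-$O(k)$ error term $-4m^2k^3/n^4$; once that is granted, the Chebyshev step and the degree-counting argument are routine.
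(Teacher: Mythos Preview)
Your proof is correct and takes a genuinely different route from the paper's. The paper argues by contradiction: assuming $\delta(G)\ge \alpha n/k$, it passes to a random subgraph $G'\subseteq G$ (keeping each edge with probability $\beta/k$ for large $\beta$), so that $\Delta(G')=o(n/k)$ and $e(G')=\Theta(n^2/k^2)$ with large implied constant; then Proposition~\ref{prop:Brun} forces $X_{G',k}$, and hence $X_{G,k}$, to exceed $\ell$ with probability $1-o_k(1)$. Your argument is more elementary: you bound $\Var(X_{G,k})=O(k)$ directly from the expansion in Section~\ref{sec::main}, apply Chebyshev to get $\mu=O(\sqrt{k})$, and read off $m=O(n^2/k^{3/2})$, hence $\delta(G)\le 2m/n = O(n/k^{3/2})$. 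This even yields a quantitative improvement over the bare $o_k(n/k)$ and avoids both the sieve and the random-thinning trick.

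One small point: your appeal to ``symmetry'' to obtain $e(G)\le\binom{n}{2}/2$ is not available here, since complementing turns the fixed $\ell$ into $\binom{k}{2}-\ell$, which is no longer fixed. But you don't need it: immediately before the claim the paper has already reduced to $\Delta(G)=cn/k$, and that alone gives $m\le n\Delta/2=O(n^2/k)$ and $S\le n\Delta^2=O(n^3/k^2)$, which is all your variance bound requires. You are also right to flag that the variance identity must be checked in the regime $m=O(n^2/k)$ rather than $m=o(n^2/k)$; a line-by-line check shows the approximation errors in $\E(X^2)$ (from replacing the count of disjoint edge-pairs by $m^2$ and $(k)_j,(n)_j$ by $k^j,n^j$) are all $O(1)$ or $O(k^2/n)$, so $\Var(X_{G,k})\le m k^2/n^2 + Sk^3/n^3 + O(1)=O(k)$ as you claim.
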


\begin{proof}
Suppose that $\delta(G)\geq\alpha n/k$ for some constant $\alpha = \alpha(\ell) > 0$. Let $G'$ be a subgraph of $G$ obtained by selecting each edge of $G$ independently at random with probability $\beta/k$, where $\beta = \beta(\ell, \alpha)$ is a sufficiently large constant. Since $n \gg k$, with high probability all degrees in $G'$ will be between $(1-o(1)) \alpha \beta n/k^2$ and $(1+o(1)) c \beta n/k^2$, and by compactness we may assume that $e(G) = (1+o_k(1)) \mu n^2/k^2$ for some constant $\mu = \mu(\ell, \alpha, \beta) > \alpha \beta/3 > 0$. Hence $G'$ satisfies the conditions of Proposition~\ref{prop:Brun}, implying that the distribution of $X_{G',k}$ is approximately Poisson with parameter $\mu$. However, since $\beta$ was chosen to be sufficiently large, this means that $A_{G',k}$, and a fortiori $A_{G,k}$ will induce more than $\ell$ edges with probability $1 - o_k(1)$.
\end{proof}

Now, let $B = A_{G \setminus \{v,w\}, k-1}$ denote a subset of $V(G) \setminus \{v,w\}$ of size $k-1$, chosen uniformly at random among all such subsets. 
Assuming that $a > 1/2$ (as otherwise we are done), we apply Lemma~\ref{lem:sameDegree} to $v$ and $w$ to infer
$$
\Pb\left(e_G(v, B) = e_G(w, B)\right) > 2a - 1 - o_n(1).
$$
However, since $d_G(w) = o_k(n/k)$ holds by Claim~\ref{cl:mindegreesmall} and $d_G(v) \geq \log 2 (n/k)$ holds by assumption, a straightforward calculation shows that 
$$
\Pb(e_G(w,B) = 0) = 1 - o_k(1) \ \ \ \text{and} \ \ \ \Pb(e_G(v,B) = 0) \leq (1+o_k(1)) e^{-\log 2} = 1/2 + o_k(1).
$$ 
It follows that $1/2 + o_k(1) > 2a - 1 - o_n(1)$, implying $a \leq 3/4 + o_k(1)$, as claimed. 

\section{Concluding remarks} \label{sec:remarks}

\subsection*{Bounds on $\varepsilon$ in Theorem~\ref{thm:main}.}
While, for clarity of presentation, we did not make an effort to calculate $\varepsilon$ explicitly, it is not difficult to see that this constant is not too small. when $k$ is sufficiently large, the value $\varepsilon = 1/100$ is certainly sufficient, and with some care, one can improve it to $\varepsilon = 1/10$. On the other hand, looking at some of our arguments, it is evident that in order to go below $1/2$ for every sufficiently large $k$ and every $0 < \ell < \binom{k}{2}$, one would need new ideas.  

\subsection*{Upper bounds for fixed $\ell$}
It is not hard to see that our argument in the proof of Theorem~\ref{thm:fixed} gives in fact better bounds than $1/2 + o(1)$ for $\ell = 1$ and $3/4 + o(1)$ in the general case. Take for example the case $\ell = 1$. For $\Delta(G) = cn/k$ we obtain 
$$
a \leq (1+o(1)) \frac{c}{e^c-1+c},
$$  
where the right hand side attains its maximum of $1/2$ when $c$ is close to zero. On the other hand, when $\Delta(G) = o(n/k)$ Proposition~\ref{prop:Brun} (Brun's sieve) implies that $a \leq 1/e + o(1)$. Interpolating between these two arguments shows that indeed 
$$
\ind(k,1) < 1/2 - \Omega(1).
$$ 

\subsection*{Strengthening Conjecture~\ref{conj:stat} in some ranges}
Theorem~\ref{thm:poly} demonstrates that the assertion of Conjecture~\ref{conj:stat} holds, with room to spare, for almost all values of $k$ and $\ell$. 
In particular, $\ind(k, \ell)$ tends to $0$ with $k$ as long as $\ell$ and $\binom{k}{2}-\ell$ are quadratic in $k$.
We believe that in the above statement `quadratic' can be replaced with `super-linear'. 
\begin{conj} \label{conj::largek}
For all pairs $(k, \ell)$ satisfying $\min \left\{\ell, \binom{k}{2} - \ell \right\} = \omega(k)$, we have $\ind(k,\ell) = o(1)$.
\end{conj}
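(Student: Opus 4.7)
The plan is to strengthen the $\tfrac12+o(1)$ bound implicit in Proposition~\ref{prop:largelweak} all the way to $o(1)$, by combining Chen--Stein-type Poisson approximation with the structural regularity of extremal graphs. Suppose for contradiction that $a=\ind(k,\ell)\geq\varepsilon$ for a fixed $\varepsilon>0$ while $\ell=\omega(k)$; by symmetry assume $\ell\leq\binom{k}{2}/2$, so by Proposition~\ref{prop:largelweak} we may further assume $a\leq\tfrac12+o(1)$. Let $G$ be an $n$-vertex extremal graph. From $\mathbb{E}(X_{G,k})\geq\ell a=\Omega(\ell)$ together with equation~\eqref{eq:VarX}, we get $\Var(X_{G,k})=\Omega(\ell)\to\infty$, so $X_{G,k}$ should be spread over many integers and $\Pb(X_{G,k}=\ell)$ should be $O(1/\sqrt{\ell})=o(1)$.

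To realize this intuition, consider first the sparse regime $\Delta(G)=o(n/k)$. A Chen--Stein estimate on the sum $X_{G,k}=\sum_{e\in E(G)}X_e$ with $X_e=\mathbf{1}[e\subset A]$ yields the total-variation bound
$$
d_{\mathrm{TV}}(X_{G,k},\mathrm{Poisson}(\mu))=O(\Delta(G)k/n)=o(1),
$$
where $\mu=\mathbb{E}(X_{G,k})=\Theta(\ell)$. Since a $\mathrm{Poisson}(\mu)$ variable with $\mu\to\infty$ has peak probability $(1+o(1))/\sqrt{2\pi\mu}$, this gives $\Pb(X_{G,k}=\ell)=O(1/\sqrt{\ell})+o(1)=o(1)$, contradicting $a\geq\varepsilon$. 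The complementary dense regime $\Delta(G)=\Theta(n/k)$ is handled by selecting a maximum-degree vertex $v$, conditioning on $v\in A$ via Lemma~\ref{lem:zykov}, and writing $X_{G,k}=d(v,B)+e(G[B])$ with $B$ a uniform $(k-1)$-subset of $V\setminus\{v\}$. The hypergeometric variable $d(v,B)$ has standard deviation $\Theta(\sqrt{k})$; for each typical $t$, one needs $e(G[B])=\ell-t=\omega(k)$, and iterating this peeling step should progressively reduce the problem to the sparse case where the Chen--Stein argument applies.

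The main obstacle is that the dense regime $\Delta(G)=\Theta(n/k)$ lies outside the reach of both the Chen--Stein bound and the current structural lemmas: Lemmas~\ref{lem:symmdiff} and~\ref{lem:maxdeg} require $a>\tfrac12$, but Proposition~\ref{prop:largelweak} forces $a\leq\tfrac12+o(1)$ whenever $\ell=\omega(k)$. Extending Lemma~\ref{lem:sameDegree} to all $a>0$, perhaps via a second-moment or Ahlswede--Katona-style argument as used in Theorem~\ref{thm:poly}, would be a natural first step, recovering near-regularity without the $a>\tfrac12$ assumption. Alternatively, a point-wise local central limit theorem for $X_{G,k}$ under weak dependence, or a direct Sperner--Littlewood--Offord approach extending Theorem~\ref{thm:poly} from $\ell=\Omega(k^2)$ down to $\ell=\omega(k)$, could avoid the sparse/dense dichotomy altogether.
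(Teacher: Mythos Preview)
This statement appears in the paper as an open conjecture (Conjecture~\ref{conj::largek}) in the concluding remarks; the paper gives no proof. There is thus no argument of the paper's to compare against --- your proposal is an attack on what was, at the time of writing, an open problem (the addendum records that Kwan, Sudakov and Tran~\cite{KST} later resolved the Edge-Statistics Conjecture in the superlinear regime).

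Beyond the obstacle you already flag, your sparse/dense dichotomy is miscalibrated. Under your standing hypothesis $a\geq\varepsilon$ with $\ell=\omega(k)$, Lemma~\ref{lem:avgdeg} forces $m\geq(1-o(1))\,a\ell\, n^2/k^2=\omega(n^2/k)$, so the \emph{average} degree of the extremal graph is already $\omega(n/k)$, and hence $\Delta(G)=\omega(n/k)$. The ``sparse regime'' $\Delta(G)=o(n/k)$ is therefore vacuous, and the Chen--Stein step --- while the bound $d_{\mathrm{TV}}=O(\Delta k/n)$ is correct on its own terms --- disposes of a case that cannot occur. For the same reason the remaining regime is $\Delta(G)=\omega(n/k)$, not $\Theta(n/k)$; peeling a single high-degree vertex leaves a residual target $\ell'=\ell-\omega(1)$ that is still $\omega(k)$, so there is no finite reduction to a Poisson instance.

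You are right that the real barrier is structural: once Proposition~\ref{prop:largelweak} pushes $a$ down to $1/2+o(1)$, Lemmas~\ref{lem:symmdiff} and~\ref{lem:maxdeg} (which need $a>1/2$) are unavailable, and the paper's machinery gives no further control on $G$. Your suggested remedies --- weakening the $a>1/2$ hypothesis in Lemma~\ref{lem:sameDegree}, a local CLT for $X_{G,k}$, or extending the Sperner/Littlewood--Offord argument of Section~\ref{sec:poly} from $\ell=\Omega(k^2)$ down to $\ell=\omega(k)$ --- are reasonable research directions, but none is carried out. As written this is an honest outline with its central gap explicitly acknowledged, not a proof.
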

On the other hand, our construction for $\ell = k-1$ can be straightforwardly extended to show that for any fixed integer $C$ we have $\ind(k, C(k-C)) = \Omega(1)$. 

Note also that the argument appearing in the proof of Theorem~\ref{thm:poly} can be applied in a wider range, namely, it can be used to prove that $\ind(k,\ell)$ is polynomially small in $k$ whenever $\min \left\{\ell, \binom{k}{2} - \ell \right\} = \Omega(k^{2 - \delta})$ for some explicit constant $\delta > 0$.

It would be interesting to determine the `correct' power of $k$ in various ranges. For instance, when $\ell$ and $\binom{k}{2} - \ell$ are quadratic in $k$, Theorem~\ref{thm:poly} yields $\ind(k, \ell) = O(k^{-0.1})$, whereas, for some values of $\ell = \Theta \left(k^2 \right)$, we have $\ind(k, \ell) = \Omega(k^{-1/2})$ --- take for instance $\ell = k^2/4$ and $G = K_{n/2,n/2}$. We believe that the latter bound is tight.
\begin{conj} \label{conj::quadraticrange}
For all pairs $(k, \ell)$ satisfying $\min \left\{\ell, \binom{k}{2} - \ell \right\} = \Omega(k^2)$, we have $\ind(k,\ell) = O(k^{-1/2})$.
\end{conj}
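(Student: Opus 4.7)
Since Conjecture~\ref{conj::quadraticrange} is open, my sketch is necessarily speculative. By the monotonicity of $I(n,k,\ell)$ already exploited in Section~\ref{sec:poly}, it suffices to bound $I(2k,k,\ell)$ for an extremal $G$ on $2k$ vertices; complementing if needed, assume $e(G)\leq \binom{2k}{2}/2$. Since $\min\{\ell,\binom{k}{2}-\ell\}=\Omega(k^2)$, Claims~\ref{cl:eGquadratic} and~\ref{cl:distinguished} give $e(G)=\Omega(k^2)$ and $|D(G)|=\Theta(k^2)$.

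The obvious attempt is to push the Littlewood--Offord/Sperner argument of Claim~\ref{cl:MK5} to its natural limit by taking $|A_1|=\Theta(k)$ instead of $|A_1|=k^{0.2}$, which would deliver $\binom{|A_1|}{|A_1|/2}/2^{|A_1|}=O(k^{-1/2})$ directly. However, the monotonicity of the edge count along the resulting Sperner chain -- which the paper secures via $|d(u,A_0)-d(v,A_0)|\geq k^{0.4}\gg\binom{|A_1|}{2}$ -- cannot survive at $|A_1|=\Theta(k)$: the noise contributed by $A_1$ to any single pair's edge count is then $\Theta(k)$, comparable to the largest possible degree gap into $A_0$. The extremal example $K_{n/2,n/2}$ explains why this is not merely a technical hurdle: every cross-pair has \emph{identical} degree-difference into $A_0$, so contributions from distinct pairs are perfectly correlated and the anti-chain collapses to a one-dimensional object of length $\Theta(\sqrt{k})$.

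My plan is therefore a structural dichotomy. \textbf{(i)} If $G$ admits a partition $V(G)=V_1\cup\cdots\cup V_r$ with $r=O(1)$ whose inter-part edge densities determine $e(G[A])$ up to additive error $o(k)$ -- an approximate equitable partition, which one would hope to extract from iterated Zykov symmetrization via Lemma~\ref{lem:zykov} -- then $X_{G,k}$ is a bounded-coefficient quadratic polynomial in the hypergeometric vector $(|A\cap V_i|)_{i\leq r}$, whose coordinates have standard deviation $\Theta(\sqrt{k})$. A local central limit theorem for hypergeometric vectors, combined with a lattice-point count for the level set $\{\mathbf{x}: P(\mathbf{x})=\ell\}$ inside a box of side $O(\sqrt{k})$, should yield the target $O(k^{-1/2})$ bound and match the $K_{n/2,n/2}$ lower bound. \textbf{(ii)} Otherwise, $G$ should contain many distinguished pairs whose degree signatures across \emph{every} candidate partition are genuinely distinct, and a vector Hal\'asz-type anti-concentration inequality applied to the multi-dimensional sum of sign variables should give a bound substantially stronger than $O(k^{-1/2})$.

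The main obstacle is case~(i): one must prove that the only graphs saturating the conjectured bound are (approximately) those with such a low-dimensional block structure. Lemma~\ref{lem:zykov} gives only first-order information -- that every vertex participates in the target edge count with nearly equal frequency -- and bootstrapping this to an approximate equitable partition with a bounded number of parts is precisely the step that the present paper does not address, and I expect it to be the crux of any proof.
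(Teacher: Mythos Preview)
The paper does not prove this statement: it is explicitly posed as an open conjecture in Section~\ref{sec:remarks}, so there is no proof to compare against. You correctly recognise this and frame your write-up as a research proposal rather than a proof.

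As a proposal, your diagnosis of why the Sperner argument of Claim~\ref{cl:MK5} stalls at $|A_1|=\Theta(k)$ is accurate, and the $K_{n/2,n/2}$ example is indeed the right obstruction. The structural dichotomy you outline is plausible, but even case~(i) is less routine than you suggest. With $r$ parts and $t_i=|A\cap V_i|$, the constraint $\sum_i t_i=k$ leaves an $(r-1)$-dimensional hypergeometric vector, and the local limit theorem assigns mass $\Theta(k^{-(r-1)/2})$ to each lattice point; to land at $O(k^{-1/2})$ you therefore need the level set $\{P(\mathbf{t})=\ell\}$ to contain $O(k^{(r-2)/2})$ lattice points in the relevant $\sqrt{k}$-box. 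For a \emph{nondegenerate} quadratic form this is the expected count on a codimension-one surface, but degenerate block structures (e.g.\ several twin classes with identical densities) can make $P$ constant along affine subspaces, and then the bound fails unless you first merge redundant parts. So case~(i) already requires a canonical coarsest partition, not just any bounded one. Your identification of the real bottleneck --- bootstrapping Lemma~\ref{lem:zykov} into an approximate block structure --- is correct, and nothing in the paper addresses it; the paper's Lemmas~\ref{lem:symmdiff} and~\ref{lem:maxdeg} only operate under the hypothesis $\ind(k,\ell)>1/2$, which is far from the regime $\ind(k,\ell)=\Theta(k^{-1/2})$ you need here.
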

\subsection*{Minimum edge-inducibility}
A natural counterpart to Conjecture~\ref{conj:stat} would be to determine the asymptotic value of
$$
\eta(k) := \min \left\{\ind(k,\ell): 0 \leq \ell \leq \binom{k}{2} \right\},
$$
as $k$ tends to $\infty$. Note that this question has been well-studied in the setting of graph-inducibilities~\cite{HT, KNV, PG, Yuster}. In particular, it is known~\cite{PG, Yuster} that 
$$
\min \{\ind(H) : |H|=k\} = (1+o(1)) \frac{k!}{k^k} = e^{-k+o(1)}.
$$ 
This is in stark contrast with the lower bound of $\eta(k)=\Omega(1/k)$ for edge-inducibilities, achieved by the random graphs $G \left(n, \ell/\binom{k}{2} \right)$. 

\subsection*{$3/4$ as a general upper bound}
As noted in the introduction, $\ind(k,\ell) < 1$ for every positive integer $k$ and every $0 < \ell < \binom{k}{2}$. It seems plausible that in fact $\ind(k,\ell) \leq 3/4$ for all such pairs $(k,\ell)$. Note that, if true, this bound would be tight since, as noted in the introduction, $\ind(3,1) = \ind(3,2) = 3/4$. Such a result would simultaneously improve Theorems~\ref{thm:main} and~\ref{thm:fixed}.

\subsection*{Hypergraphs}
The concepts of both graph- and edge-inducibility extend naturally to $r$-uniform hypergraphs. Since the lower bound constructions of $1/e$ for $\ell=1$ and for the star $K_{1,k-1}$ extend to higher uniformities as well, it makes sense to ask if  Conjectures~\ref{conj:stat} and~\ref{conj:ind} would also hold in this more general setting. Needless to say that we expect these questions to be difficult.

\subsection*{Further questions}
The constant of $1/e$, being closely related to the Poisson distribution, makes an appearance in many combinatorial and probabilistic setups. In pariticular, we  would like to draw the reader's attention to the interesting conjectures of Feige~\cite{Feige} and Rudich (see~\cite{KSS}). 

\section*{Addendum}

In the period when this paper was under review, our results have been extended in several directions. In particular, The Edge-Statistics Conjecture (Conjecture~\ref{conj:stat}) was proved in the superlinear regime by Kwan, Sudakov and Tran~\cite{KST}. In the sublinear regime the conjecture was proved by Fox and Sauermann~\cite{FS} and, independently, by Martinsson, Mousset, Noever and Truji\'{c}~\cite{MMNT}. Thus, the results of~\cite{KST}, combined with either~\cite{FS} or~\cite{MMNT} prove the assertion of Conjecture~\ref{conj:stat}. Some questions which remain open can be found in these papers.

\section*{Acknowledgement}

We thank the anonymous referee for carefully reading our paper and for providing helpful remarks.

\end{document}